\newtheorem{thm}{Theorem}[section]
\newtheorem*{thm*}{Theorem}
\newtheorem{lem}[thm]{Lemma}
\newtheorem*{lem*}{Lemma}
\newtheorem{cor}[thm]{Corollary}
\newtheorem{prop}[thm]{Proposition}
\theoremstyle{definition}
\newtheorem{assump}[thm]{Assumption}
\newtheorem{case}{Case}\renewcommand{\thecase}{}
\newtheorem*{case*}{Case}
\newtheorem*{defn*}{Definition}
\newtheorem*{exmp*}{Example}
\newtheorem{rmk}[thm]{Remark}
\newtheorem*{rmk*}{Remark}
\newtheorem{step}{Step}\renewcommand{\thestep}{}
\theoremstyle{remark}
\def\alphenumi{
  \def\theenumi{\alph{enumi}}
  \def\p@enumi{\theenumi}
  \def\labelenumi{(\@alph\c@enumi)}}
\def\thecase{\@arabic\c@case}
\def\thestep{\@arabic\c@step}
\def\hhmm{\number\hh:\ifnum\mm<10{}0\fi\number\mm}
\let\oldmarginpar\marginpar
\renewcommand\marginpar[1]{\-\oldmarginpar[\raggedleft\footnotesize #1]%
{\raggedright\footnotesize #1}}
\renewcommand\emptyset{\varnothing}
\newcommand\NN{\mathbb{N}}
\newcommand\RR{\mathbb{R}}
\newcommand\eps{\varepsilon}
\newcommand\dist{\operatorname{dist}}
\newcommand\supp{\operatorname{supp}}
\numberwithin{equation}{section}
\begin{document}

\title[$C^0$-estimates and smoothness of solutions]{$C^0$-estimates and smoothness of solutions to the parabolic equation defined by Kimura operators}

\author[C. Pop]{Camelia A. Pop}
\address[CP]{Department of Mathematics, University of Pennsylvania, 209 South 33rd Street, Philadelphia, PA 19104-6395}
\email{cpop@math.upenn.edu}

\date{\today{ }\hhmm}

\begin{abstract}
Kimura diffusions serve as a stochastic model for the evolution of gene frequencies in population genetics. Their infinitesimal generator is an elliptic differential operator whose second-order coefficients matrix degenerates on the boundary of the domain. In this article, we consider the inhomogeneous initial-value problem defined by generators of Kimura diffusions, and we establish $C^0$-estimates, which allows us to prove that solutions to the inhomogeneous initial-value problem are smooth up to the boundary of the domain where the operator degenerates, even when the initial data is only assumed to be continuous.
\end{abstract}

% AMS 2010 subject classifications (used in AMS journals)
% Primary
% 35H99   Close-to-elliptic equations and systems 
% 35J70  	Degenerate elliptic equations
% 35J86  	Linear elliptic unilateral problems and linear elliptic variational inequalities
% 49J40  	Variational methods including variational inequalities
% 35R45  	Partial differential inequalities
%
% Secondary
% 60G22  	Fractional processes, including fractional Brownian motion
% 35R35  	Free boundary problems
% 60J60  	Diffusion processes
% 49J20  	Optimal control problems involving partial differential equations

%TODO: Review
\subjclass[2010]{Primary 35J70; secondary 60J60}
%TODO: Review
% AMS keywords (used in AMS journals)
\keywords{Degenerate elliptic operators, anisotropic H\"older spaces, Kimura diffusions, degenerate diffusions}

% Acknowledge support
%\thanks{}

\maketitle

\tableofcontents

\section{Introduction}
\label{sec:Introduction}
The evolution of gene frequencies is one of the central themes of research in population genetics, and one of the natural ways to model the changes of gene frequencies in a population is through the use of Markov chains and their continuous limits. This line of research was initiated by R. Fisher (1922), J. Haldane (1932), S. Wright (1931), and later extended by M. Kimura (1957). The stochastic processes involved in these works are continuous limits of discrete Markov processes, which are solutions to stochastic differential equations whose infinitesimal generator is a degenerate-elliptic partial differential operator. A rigorous understanding of the regularity of solutions to parabolic equations defined by such operators plays a central role in the study of various probabilistic properties of the associated stochastic models.

A wide extension of the generator of continuous limits of the Wright-Fisher model \cite{Fisher_1922, Wright_1931, Haldane_1932,  Kimura_1957, Kimura_1964, Shimakura_1981, Ethier_Kurtz, KarlinTaylor2}
%KarlinTaylor2 - \S 15.2.F; Ethier_Kurtz - \S 10.1
was introduced in the work of C. Epstein and R. Mazzeo \cite{Epstein_Mazzeo_2010, Epstein_Mazzeo_annmathstudies}, where the authors build a suitable Schauder theory to prove existence, uniqueness and optimal regularity of solutions to the inhomogeneous initial-value problem defined by generalized Kimura diffusion operators acting on functions defined on compact manifolds with corners. In our work, we extend the regularity results obtained in \cite{Epstein_Mazzeo_2010, Epstein_Mazzeo_annmathstudies} by proving a priori local Schauder estimates of solutions, in which we control the higher-order H\"older norm of solutions in terms of their supremum norm (Theorem \ref{thm:Time_interior_estimate}). This result allows us to prove in Theorem \ref{thm:ExistenceUniqueness_continuous} that the solutions are smooth up to the portion of the boundary where the operator degenerates, even when the initial data is only assumed to be continuous, as opposed to H\"older continuous in \cite{Epstein_Mazzeo_2010, Epstein_Mazzeo_annmathstudies}. In the sequel, we describe our main results and their applications in more detail.

Let $\RR_+:=(0,\infty)$ and $S_{n,m}:=\RR_+^n\times\RR^m$, where $n$ and $m$ are nonnegative integers such that $n+m\geq 1$. While generalized Kimura diffusion operators act on functions defined on compact manifolds with corners \cite[\S 2]{Epstein_Mazzeo_annmathstudies}, from an analytical point of view and due to the fact that we are interested in the local properties of solutions, in our article, we consider a second-order elliptic differential operator of the form
\begin{equation}
\label{eq:Generator}
\begin{aligned}
Lu&= \sum_{i=1}^n \left(x_ia_{ii}(z)u_{x_ix_i} + b_i(z)u_{x_i}\right) + \sum_{i,j=1}^n x_ix_j \tilde a_{ij}(z)u_{x_ix_j} \\
&\quad+\sum_{i=1}^n\sum_{l=1}^m x_i c_{il}(z)u_{x_iy_l} +\sum_{k,l=1}^m d_{kl}(z)u_{y_ky_l}+ \sum_{l=1}^m e_l(z)u_{y_l},
\end{aligned}
\end{equation}
defined for all $z=(x,y)\in S_{n,m}$ and $u \in C^2(S_{n,m})$. Even though the operator $L$ is defined on $S_{n,m}$, we still call $L$ a generalized Kimura diffusion operator since it preserves the local properties of the Kimura diffusion operators arising in population genetics. The operator $L$ is not strictly elliptic as we approach the boundary of the domain $S_{n,m}$, because the smallest eigenvalue of the second-order coefficient matrix tends to $0$ proportional to the distance to the boundary of the domain. For this reason, the sign of the coefficient functions $b_i(z)$ along $\partial S_{n,m}$ plays a crucial role in the regularity of solutions, and we always assume that the drift coefficients $b_i(z)$ are nonnegative functions along $\partial S_{n,m}$. The precise technical conditions satisfied by the coefficients of the operator $L$ are described in Assumption \ref{assump:Coeff}.

We prove local a priori Schauder estimates of solutions to the inhomogeneous initial-value problem,
\begin{equation}
\label{eq:Inhom_initial_value_problem}
\begin{aligned}
u_t-Lu&=g\quad\hbox{on } (0,\infty)\times S_{n,m},\\
u(0,\cdot)&=f\quad\hbox{on } S_{n,m},
\end{aligned}
\end{equation}
which we then use to prove the smoothness of solutions on $(0,\infty)\times\bar S_{n,m}$, when the initial data, $f$, is assumed to be only \emph{continuous} on $\bar S_{n,m}$. The inhomogeneous initial-value problem \eqref{eq:Inhom_initial_value_problem} on compact manifolds with corners was studied by C. Epstein and R. Mazzeo in \cite{Epstein_Mazzeo_2010, Epstein_Mazzeo_annmathstudies}, where they build suitable anisotropic H\"older spaces \cite[Chapter 5]{Epstein_Mazzeo_annmathstudies} to account for the degeneracy of the operator, and establish existence, uniqueness and optimal regularity of solutions \cite[Chapters 3 and 10]{Epstein_Mazzeo_annmathstudies}, under the assumptions that the initial data, $f$, and the source function, $g$, belong to suitable H\"older spaces, and the coefficients of the differential operator $L$ are smooth and bounded functions. In our work, we prove that the solutions of the inhomogeneous initial-value problem \eqref{eq:Inhom_initial_value_problem} are smooth functions on $(0, \infty)\times\bar S_{n,m}$ and continuous on $[0,\infty)\times\bar S_{n,m}$, when the coefficients of the operator $L$ and the source function, $g$, are assumed smooth, but the initial data is only assumed to be \emph{continuous}, as opposed to H\"older continuous in \cite[\S 11.2]{Epstein_Mazzeo_annmathstudies}. In addition, we relax the assumption in \cite{Epstein_Mazzeo_annmathstudies} that the coefficients of the operator $L$ are smooth, and we only require that they are H\"older continuous. Under the new hypotheses, we derive higher-order a priori local Schauder estimates in Theorems \ref{thm:Time_interior_estimate} and \ref{thm:Interior_estimate}, and we prove existence of solutions in H\"older spaces in Theorem \ref{thm:ExistenceUniqueness}. The technical definition of the anisotropic H\"older spaces adapted to our framework is given in \S \ref{subsec:Holder_spaces}.

Our main results are

\begin{thm}[Local a priori Schauder estimates I]
\label{thm:Time_interior_estimate}
Let $\alpha\in(0,1)$ and $k\in\NN$. Then there is a positive constant, $r_0=r_0(\alpha,k,m,n)$, such that the following hold.
Let $r\in (0,r_0)$ and $0<T_0<T$. Suppose that the coefficients of the differential operator $L$ defined in \eqref{eq:Generator} satisfy Assumption \ref{assump:Coeff}. Then, there is a positive constant, $C=C(\alpha,\delta,k,K,m,n,r,T_0,T)$, such that for all $z^0 \in \bar S_{n,m}$, and all functions, $u\in C^{k,2+\alpha}_{WF}([T_0/2,T]\times\bar B_{2r}(z^0))$,  we have that
\begin{equation}
\label{eq:Time_interior_estimate}
\|u\|_{C^{k,2+\alpha}_{WF}([T_0,T]\times \bar B_r(z^0))} \leq 
C\left(\|u_t-Lu\|_{C^{k,\alpha}_{WF}([T_0/2,T]\times\bar B_{2r}(z^0))}  + \|u\|_{C([T_0/2,T]\times\bar B_{2r}(z^0))} \right).
\end{equation}
\end{thm}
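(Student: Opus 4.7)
The plan is to upgrade a standard local Schauder estimate — one that bounds $\|u\|_{C^{k,2+\alpha}_{WF}}$ by the $C^{k,\alpha}_{WF}$ norm of $u_t - Lu$ plus the \emph{same} $C^{k,\alpha}_{WF}$ norm of $u$ itself — to the sharper inequality \eqref{eq:Time_interior_estimate}, in which only the sup norm of $u$ appears on the right-hand side. The mechanism is the classical interpolation-and-absorption argument, implemented via a Simon-type iteration on a nested family of parabolic cylinders.

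First, I would fix $z^0 \in \bar S_{n,m}$ and, for $\rho \in [r, 2r]$, introduce cylinders $Q_\rho := [T_0(\rho), T]\times\bar B_\rho(z^0)$, where $T_0(\rho)$ interpolates monotonically between $T_0$ and $T_0/2$ as $\rho$ runs from $r$ to $2r$. The first ingredient is a local Schauder-type estimate of the form
\[
\|u\|_{C^{k,2+\alpha}_{WF}(Q_{\rho_1})} \leq \frac{C_1}{(\rho_2-\rho_1)^\gamma}\bigl(\|u_t-Lu\|_{C^{k,\alpha}_{WF}(Q_{\rho_2})} + \|u\|_{C^{k,\alpha}_{WF}(Q_{\rho_2})}\bigr),
\]
valid for $r \leq \rho_1 < \rho_2 \leq 2r$ and some exponent $\gamma>0$. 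This should follow from the Schauder theory developed by Epstein and Mazzeo, together with the H\"older-coefficient extension provided elsewhere in this paper, via localization by smooth cutoffs: multiplying $u$ by $\chi$ equal to $1$ on $Q_{\rho_1}$ and supported in $Q_{\rho_2}$ produces $\chi u$ satisfying an equation of the same type whose source contains the commutator $[L,\chi]u$, whose coefficients are strictly lower-order and scale like negative powers of $\rho_2 - \rho_1$.

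The second ingredient is an interpolation inequality in the Wright--Fisher scale: for every $\varepsilon>0$ and every cylinder $Q$,
\[
\|u\|_{C^{k,\alpha}_{WF}(Q)} \leq \varepsilon \|u\|_{C^{k,2+\alpha}_{WF}(Q)} + C_\varepsilon \|u\|_{C^0(Q)}.
\]
This can be derived directly from the definition of the anisotropic H\"older seminorms in \S\ref{subsec:Holder_spaces} by a standard real-interpolation argument adapted to the fact that the relevant metric is Euclidean in $y$ but uses $\sqrt{x}$-distances in the degenerate $x$-directions, with a matching parabolic scaling in $t$. Substituting this interpolation into the Schauder estimate and choosing $\varepsilon$ as a suitable power of $(\rho_2-\rho_1)$ produces, after a short rearrangement,
\[
\|u\|_{C^{k,2+\alpha}_{WF}(Q_{\rho_1})} \leq \tfrac{1}{2}\|u\|_{C^{k,2+\alpha}_{WF}(Q_{\rho_2})} + \frac{C_2}{(\rho_2-\rho_1)^{\gamma'}}\bigl(\|u_t-Lu\|_{C^{k,\alpha}_{WF}(Q_{2r})} + \|u\|_{C^0(Q_{2r})}\bigr).
\]
A standard iteration lemma of Simon/Giaquinta type applied to the monotone quantity $\Phi(\rho) := \|u\|_{C^{k,2+\alpha}_{WF}(Q_\rho)}$ then converts this into $\Phi(r) \leq C\bigl(\|u_t-Lu\|_{C^{k,\alpha}_{WF}(Q_{2r})} + \|u\|_{C^0(Q_{2r})}\bigr)$, which is precisely \eqref{eq:Time_interior_estimate}.

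The principal technical obstacle lies in the anisotropic nature of the spaces $C^{k,\alpha}_{WF}$, which are tailored to the singular Kimura geometry. Both the Schauder estimate and the interpolation inequality must be established with explicit and sharp control over their dependence on the radius of the cylinder, uniformly as $z^0$ varies over boundary points of $S_{n,m}$ at which arbitrarily many of the $x_i$ coordinates vanish. The smallness hypothesis $r < r_0(\alpha,k,m,n)$ encodes this constraint: the model operator approximating $L$ at a boundary stratum is reliable only over scales small compared to the ambient geometry, and the commutator bounds produced by the localization step must scale correctly in $(\rho_2-\rho_1)^{-1}$ to make the absorption work. Verifying this sharp scaling in the $WF$-framework — rather than the Schauder estimate or the interpolation individually — is the delicate part of the argument.
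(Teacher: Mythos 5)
Your structure --- localize by cutoffs, invoke a global WF-Schauder estimate, interpolate against the sup norm, and absorb by iteration --- is essentially the paper's strategy, which follows Krylov's implementation from \cite[Theorem 8.11.1]{Krylov_LecturesHolder}: a countable geometric sequence of cutoffs $\varphi_N$, the quantities $\alpha_N:=\|u\varphi_N\|_{C^{k,2+\alpha}_{WF}}$, and a geometric-series telescope in place of the Simon/Giaquinta iteration lemma you suggest. Those two devices are interchangeable for this purpose, so the difference there is cosmetic.

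Two details in your plan are stated too loosely to go through as written, and both correspond to real work in the paper. First, the commutator $[L,\chi]u$ (see \eqref{eq:ConstantCoeff2}) contains weighted first-derivative terms such as $x_i a_{ii} u_{x_i}\chi_{x_i}$ and $d_{lk} u_{y_l}\chi_{y_k}$, so the localized estimate does \emph{not} come with $\|u\|_{C^{k,\alpha}_{WF}(Q_{\rho_2})}$ alone on the right: the source term involves $\|x_i u_{x_i}\|_{C^{k,\alpha}_{WF}}$ and $\|u_{y_l}\|_{C^{k,\alpha}_{WF}}$, which are intermediate between $\|u\|_{C^{k,\alpha}_{WF}}$ and $\|u\|_{C^{k,2+\alpha}_{WF}}$. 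Interpolating $\|u\|_{C^{k,\alpha}_{WF}}$ against $\|u\|_{C^{k,2+\alpha}_{WF}}$ and $\|u\|_{C^0}$ is therefore not enough by itself; you need the derivative-weighted interpolation inequalities \eqref{eq:InterpolationIneqS3}--\eqref{eq:InterpolationIneqS3_secund} and Corollary \ref{cor:Higher_order_interpolation_inequalities}, which in turn require the support to sit in a single coordinate patch $M''_I$ (so the weights $\sqrt{x_i}$, $x_i$ are assigned consistently). Second, you never say where the $r<r_0$ smallness actually enters. In the paper's proof one freezes the coefficients at $z^0$ to get a model operator $L_0$, applies the Epstein--Mazzeo global estimate \cite[Theorem 10.0.2]{Epstein_Mazzeo_annmathstudies} to $L_0(u\varphi_N)$, and then bounds the error $(L_0-L)(u\varphi_N)$ via Lemma \ref{lem:Estimate_Lu}, using the H\"older-coefficient hypothesis to get $\Lambda\leq Cr^{\alpha/2}$. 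The smallness of $r_0$ is thus analytic (it makes the coefficient-oscillation coefficient $\Lambda$ small enough to absorb), not merely the geometric condition $B_{2r}(z^0)\subset M'_{I}$. Neither issue is fatal, but both must be addressed explicitly for the absorption to close.
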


\begin{thm}[Local a priori Schauder estimates II]
\label{thm:Interior_estimate}
Let $\alpha\in(0,1)$ and $k\in\NN$. Then there is a positive constant, $r_0=r_0(\alpha,k,m,n)$, such that the following hold. Let $r\in(0,r_0)$ and $T>0$. Suppose that the coefficients of the differential operator $L$ defined in \eqref{eq:Generator} satisfy Assumption \ref{assump:Coeff}. Then, there is a positive constant, $C=C(\alpha,\delta,k,K,m,n,r,T)$, such that for all $z^0 \in \bar S_{n,m}$, and all functions, $u\in C^{k,2+\alpha}_{WF}([0,T]\times\bar B_{2r}(z^0))$,  we have that
\begin{equation}
\label{eq:Interior_estimate}
\begin{aligned}
\|u\|_{C^{k,2+\alpha}_{WF}([0,T]\times \bar B_r(z^0))} 
&\leq C\left(\|u_t-Lu\|_{C^{k,\alpha}_{WF}([0,T]\times\bar B_{2r}(z^0))}\right.\\
&\quad\left.+ \|u(0,\cdot)\|_{C^{k,2+\alpha}_{WF}(\bar B_{2r}(z^0))} + \|u\|_{C([0,T]\times\bar B_{2r}(z^0))} \right).
\end{aligned}
\end{equation}
\end{thm}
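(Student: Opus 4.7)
My strategy is to reduce Theorem \ref{thm:Interior_estimate} to the interior-in-time estimate of Theorem \ref{thm:Time_interior_estimate} in two moves: first subtract off the initial data so as to pass to the case of vanishing initial data, and then extend the resulting estimate down to $t=0$ by a parabolic rescaling argument.

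\emph{Step 1 (reduction to zero initial data).} I would set $w(t,z) := u(t,z) - u(0,z)$. Then $w(0,\cdot) \equiv 0$ and
$$w_t - Lw = g + L u(0,\cdot) =: \tilde g.$$
Assumption \ref{assump:Coeff} on the H\"older regularity of the coefficients of $L$, together with Leibniz-type estimates in the spaces $C^{k,\alpha}_{WF}$, gives
$$\|\tilde g\|_{C^{k,\alpha}_{WF}([0,T]\times \bar B_{2r}(z^0))} \leq \|g\|_{C^{k,\alpha}_{WF}} + C\,\|u(0,\cdot)\|_{C^{k,2+\alpha}_{WF}(\bar B_{2r}(z^0))}.$$
Since $u(0,\cdot)$, viewed as a function of $(t,z)$, is constant in $t$, its parabolic norm on the cylinder coincides with its purely spatial norm. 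It therefore suffices to prove \eqref{eq:Interior_estimate} for $w$, with $\tilde g$ replacing $g$ and the initial-data term dropped from the right-hand side.

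\emph{Step 2 (extension to $t=0$).} For any $T_0 \in (0, T/2)$, Theorem \ref{thm:Time_interior_estimate} applied to $w$ on $[T_0/2, T] \times \bar B_{2r}(z^0)$ yields
$$\|w\|_{C^{k,2+\alpha}_{WF}([T_0,T] \times \bar B_r(z^0))} \leq C(T_0)\bigl(\|\tilde g\|_{C^{k,\alpha}_{WF}} + \|w\|_{C^0}\bigr),$$
but the constant $C(T_0)$ may a priori blow up as $T_0 \to 0$. To push the estimate down to $t=0$ uniformly, I would rescale parabolically: for $\sigma \in (0,1]$, set $w_\sigma(\tau,z) := w(\sigma\tau, z)$, which solves a parabolic equation of the same form with source $\sigma \tilde g(\sigma\tau,z)$ and still has zero initial data. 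Using the parabolic / Wright--Fisher self-similar scaling that underlies the anisotropic $C^{k,\alpha}_{WF}$ norms, one checks that the constant in Theorem \ref{thm:Time_interior_estimate} is invariant (or at worst scales in a controlled way) under this rescaling. Applying the estimate at dyadic scales $\sigma = T/2^j$ and summing, together with the integral identity $w(t,z) = \int_0^t (Lw + \tilde g)(s,z)\,ds$ used to control the $C^0$ contribution near $t=0$, then yields the desired bound on the full cylinder $[0,T] \times \bar B_r(z^0)$.

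\emph{Main obstacle.} The hardest point is Step 2: obtaining a $T_0$-uniform Schauder bound near the initial slice. Because $L$ degenerates on $\partial S_{n,m}$ and the $C^{k,2+\alpha}_{WF}$ spaces incorporate this degeneracy via an anisotropic (square-root) spatial scaling, one cannot simply reflect across $\{t=0\}$ as in flat parabolic theory. Instead, the argument has to exploit a careful analysis of the model Kimura heat kernel, the zero initial condition $w(0,\cdot) = 0$, and the interaction of parabolic time-scaling with the Wright--Fisher spatial geometry --- precisely the features that make the boundary behavior of the Kimura heat flow subtle in the first place.
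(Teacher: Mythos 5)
Your Step 1 is sound: writing $w = u - u(0,\cdot)$ and absorbing $Lu(0,\cdot)$ into the source is a legitimate and standard reduction, and the bound $\|\tilde g\|_{C^{k,\alpha}_{WF}} \lesssim \|g\|_{C^{k,\alpha}_{WF}} + \|u(0,\cdot)\|_{C^{k,2+\alpha}_{WF}}$ does follow from \eqref{eq:Holder_cont} and the Leibniz-type estimate \cite[Inequality (5.62)]{Epstein_Mazzeo_annmathstudies}. The gap is entirely in Step 2, and it is a real one, not merely a technical detail you are postponing. The time-only rescaling $w_\sigma(\tau,z) := w(\sigma\tau,z)$ transforms the equation into $(w_\sigma)_\tau - \sigma L\,w_\sigma = \sigma\tilde g(\sigma\tau,\cdot)$, i.e.\ the rescaled operator is $\sigma L$, whose ellipticity constant is $\sigma\delta \to 0$. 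Since the constant in Theorem~\ref{thm:Time_interior_estimate} depends on $\delta$, it is \emph{not} invariant under this rescaling; it blows up exactly in the regime you need it. The genuine Wright--Fisher parabolic scaling would also have to dilate the spatial variables ($x \mapsto \sigma x$, $y\mapsto \sqrt\sigma\, y$), which shrinks the spatial domain and forces a nontrivial covering/bootstrap argument that you gesture at but do not carry out. More fundamentally, you are trying to deduce the up-to-$t=0$ estimate \eqref{eq:Interior_estimate} from the strictly interior-in-time estimate \eqref{eq:Time_interior_estimate}, which is a weaker statement precisely because its constant degenerates as $T_0 \to 0$; the missing input near $t = 0$ cannot be recovered by rescaling alone.

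The paper's proof avoids this obstacle entirely and is much shorter: rerun the localization argument of Theorem~\ref{thm:Time_interior_estimate} verbatim, but drop the temporal cut-offs $\psi_N$ and take $T_N = 0$ for all $N$, so that $\varphi_N = \eta_N$ and $Q_N = [0,T]\times B_{r_N}(z^0)$. The crucial point you are missing is that the global a priori Schauder estimate \cite[Theorem 10.0.2]{Epstein_Mazzeo_annmathstudies} for the model operator $L_0$ is already stated on the full cylinder $[0,T]\times P$ and includes a $\|u(0,\cdot)\|_{C^{k,2+\alpha}_{WF}(P)}$ term on the right; there is no degeneracy as $t \downarrow 0$. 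Applying this to $u\eta_N$, whose initial trace is $u(0,\cdot)\eta_N$, produces exactly the $\|u(0,\cdot)\|_{C^{k,2+\alpha}_{WF}(\bar B_{2r}(z^0))}$ term in \eqref{eq:Interior_estimate} (weighted by $\|\eta_N\|$, which is controlled by \eqref{eq:PropCutOffFunction}), and the rest of the iteration in $N$ goes through unchanged. If you prefer to first reduce to zero initial data as in your Step 1, that is also fine, but you must then still invoke the zero-initial-data form of the global estimate \cite[Theorem 10.0.2]{Epstein_Mazzeo_annmathstudies} in place of your rescaling argument --- that theorem, not a scaling trick, is what supplies the $T_0$-uniform control near the initial slice.
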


\begin{rmk}[Comparison between Theorems \ref{thm:Time_interior_estimate} and \ref{thm:Interior_estimate}]
\label{rmk:Comparison_a_priori_estimates}
Notice that the a priori Schauder estimate \eqref{eq:Time_interior_estimate} shows that the $C^{k,2+\alpha}_{WF}([T_0,T]\times \bar B_r(z^0))$-H\"older norm of the function $u$ can be controlled in terms of its supremum norm on $[0,T]\times \bar B_{2r}(z^0)$, while estimate \eqref{eq:Interior_estimate} also involves the $C^{k,2+\alpha}_{WF}(\bar B_{2r}(z^0))$-H\"older norm of the initial condition, $u(0,\cdot)$. Thus, estimate \eqref{eq:Time_interior_estimate} implies that to prove higher-order H\"older regularity of solutions on $(0,T]\times\bar S_{n,m}$, it is sufficient to establish a control on the supremum norm of the solution $u$ on $[0,T]\times\bar S_{n,m}$, a key fact that we use in our proof of the smoothness of solutions to the initial-value problem \eqref{eq:Inhom_initial_value_problem} with continuous initial data, in Theorem \ref{thm:ExistenceUniqueness_continuous}.
\end{rmk}

We now state our results on existence and uniqueness of solutions with H\"older continuous initial data, and with only continuous initial data.

\begin{thm}[Existence and uniqueness of solutions with H\"older continuous initial data]
\label{thm:ExistenceUniqueness}
Let $\alpha\in(0,1)$, $k\in\NN$ and $T>0$. Suppose that the coefficients of the differential operator $L$ satisfy Assumption \ref{assump:Coeff}. Then, there is a positive constant, $C=C(\alpha,\delta,k,K,m,n,T)$, such that the following hold. Let $g \in C^{k,\alpha}_{WF}([0,T]\times\bar S_{n,m})$ and $f \in C^{k,2+\alpha}_{WF}(\bar S_{n,m})$. Then there is a unique solution, $u \in C^{k,2+\alpha}_{WF}([0,T]\times\bar S_{n,m})$, to the inhomogeneous initial-value problem \eqref{eq:Inhom_initial_value_problem}, and the function $u$ satisfies the Schauder estimate,
\begin{equation}
\label{eq:Solution_estimate}
\begin{aligned}
\|u\|_{C^{k,2+\alpha}_{WF}([0,T]\times \bar S_{n,m})} 
&\leq C\left(\|g\|_{C^{k,\alpha}_{WF}([0,T]\times\bar S_{n,m})}
 + \|f\|_{C^{k,2+\alpha}_{WF}(\bar S_{n,m})}  \right).
\end{aligned}
\end{equation}
\end{thm}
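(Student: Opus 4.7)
The plan is to establish uniqueness and the Schauder bound \eqref{eq:Solution_estimate} by combining a weak maximum principle with the local estimate of Theorem \ref{thm:Interior_estimate}, and then to construct the solution by the method of continuity, anchored at an explicit product model operator.

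\textbf{Uniqueness, $C^0$ bound, and the Schauder estimate.} Under Assumption \ref{assump:Coeff} the drift coefficients $b_i$ are nonnegative along $\partial S_{n,m}$, so the Fichera condition for the parabolic operator $\partial_t - L$ is satisfied, and the weak maximum principle applies to bounded solutions on $[0,T]\times\bar S_{n,m}$. This yields
\[
\|u\|_{C([0,T]\times\bar S_{n,m})} \leq e^{cT}\bigl(\|f\|_{C(\bar S_{n,m})} + T\|g\|_{C([0,T]\times\bar S_{n,m})}\bigr)
\]
for some $c=c(K)$, which proves uniqueness at once. For the Schauder bound, I cover $\bar S_{n,m}$ by balls $\bar B_r(z^0)$ with a fixed $r \in (0, r_0)$, apply the local estimate \eqref{eq:Interior_estimate} on each ball, and take the supremum over $z^0 \in \bar S_{n,m}$; combined with the displayed $C^0$ bound this absorbs the supremum term on the right-hand side and produces \eqref{eq:Solution_estimate}, assuming $u$ exists.

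\textbf{Existence via continuity.} I take as base point the product model operator
\[
L_0 u \;=\; \sum_{i=1}^n\bigl(x_i u_{x_ix_i}+u_{x_i}\bigr)+\sum_{l=1}^m u_{y_l y_l},
\]
whose fundamental solution is the tensor product of one-dimensional Wright--Fisher kernels (in the $x_i$-directions) and Gaussian kernels (in the $y_l$-directions); explicit estimates on these kernels show that \eqref{eq:Inhom_initial_value_problem} with $L_0$ in place of $L$ is solvable in $C^{k,2+\alpha}_{WF}$ for any data in the corresponding H\"older classes. Interpolate by $L_s := (1-s)L_0 + sL$ for $s \in [0,1]$; each $L_s$ satisfies Assumption \ref{assump:Coeff} with constants uniform in $s$, so the previous paragraph supplies \eqref{eq:Solution_estimate} for $L_s$ with constants independent of $s$. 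Viewing
\[
\mathcal{T}_s \colon u \longmapsto \bigl(u_t-L_s u,\; u(0,\cdot)\bigr)
\]
as a bounded linear map from $C^{k,2+\alpha}_{WF}([0,T]\times\bar S_{n,m})$ into $C^{k,\alpha}_{WF}([0,T]\times\bar S_{n,m}) \times C^{k,2+\alpha}_{WF}(\bar S_{n,m})$, the uniform estimate shows each $\mathcal{T}_s$ is injective with left-inverse of bounded norm on its range. Hence the set of $s$ for which $\mathcal{T}_s$ is surjective is open (Neumann series applied to $\mathcal{T}_{s+h}\mathcal{T}_s^{-1} = I + O(h)$) and closed (given a Cauchy sequence of data, the uniform estimate together with Arzel\`a--Ascoli on each compact exhaustion of $\bar S_{n,m}$ extracts a limit solution). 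Since $0$ lies in this set, so does $1$, giving existence at $L_1 = L$, and \eqref{eq:Solution_estimate} then follows from the first paragraph.

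\textbf{Main obstacle.} The principal difficulty is maintaining uniform constants on the non-compact domain $\bar S_{n,m}$ throughout the continuity method; this is precisely why the $C^0$-based form of Theorem \ref{thm:Interior_estimate}, whose constants depend only on the coefficient bounds from Assumption \ref{assump:Coeff} and the fixed radius $r$, is essential. The ancillary technical point is the base case at $s=0$, which reduces to explicit kernel estimates for the one-dimensional Wright--Fisher diffusion on $\RR_+$ that are classical and compatible with the anisotropic H\"older spaces of \S\ref{subsec:Holder_spaces}.
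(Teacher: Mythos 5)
Your proof takes a genuinely different route from the paper: you propose the method of continuity anchored at the product model operator $L_0$, whereas the paper constructs an approximate right inverse $V = \sum_N \varphi_N V_N \psi_N$ via a partition of unity subordinate to a covering of $S_{n,m}$, with locally frozen-coefficient model operators $L_N$ on auxiliary compact manifolds with corners, and then inverts $\partial_t - L$ by a Neumann series once $\|(\partial_t - L)V - I\| < 1$. Your first paragraph (uniqueness and the derivation of \eqref{eq:Solution_estimate} from Theorem \ref{thm:Interior_estimate} together with the supremum bound) is sound and aligns in spirit with Proposition \ref{prop:Comparison_principle} and Corollary \ref{cor:Maximum_principle}.

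The existence step, however, has a genuine gap. The base operator $L_0 u = \sum_i (x_i u_{x_ix_i} + u_{x_i}) + \sum_l u_{y_ly_l}$ does \emph{not} satisfy Assumption \ref{assump:Coeff} on the unbounded domain $\bar S_{n,m}$. For a set $I$ and index $i \in I^c$ (where $x_i \in (1,\infty)$), condition \eqref{eq:Holder_cont} requires $\|x_i a_{ii}\|_{C^{k,\alpha}_{WF}(\bar M_I)} \leq K$, hence $x_i a_{ii}(z)$ must be bounded and therefore $a_{ii}(z)$ must decay like $1/x_i$ as $x_i \to \infty$; together with the ellipticity bound $x_i a_{ii}(z) \geq \delta$ this pins $x_i a_{ii}$ between $\delta$ and $K$. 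Your $L_0$ has $a_{ii} \equiv 1$, so $x_i a_{ii} = x_i$ is unbounded on $M_I$ and the constant $K$ is infinite. Worse, the interpolated operator $L_s = (1-s)L_0 + sL$ has second-order coefficient $(1-s) + s\,a_{ii}(z)$, so $x_i a^{(s)}_{ii}(z) \geq (1-s)x_i$ is unbounded for every $s < 1$. Consequently you cannot invoke Theorem \ref{thm:Interior_estimate} with $s$-uniform constants along the path, and the uniform a priori estimate that the continuity method requires simply does not hold for the family you wrote down. (The operator norm of the perturbation $\mathcal{T}_{s+h} - \mathcal{T}_s = h(L - L_0)$ from $C^{k,2+\alpha}_{WF}$ to $C^{k,\alpha}_{WF}$ is likewise infinite, since the coefficient of $u_{x_ix_i}$ in $L - L_0$ grows like $x_i$ in directions where the H\"older norm only controls $u_{x_ix_i}$ without any $x_i$-weight.) This is precisely the obstruction the paper's localized parametrix avoids: each frozen operator $L_N$ is defined on a compact manifold with corners $P_N$ and only needs to approximate $L$ well on $\supp\varphi_N$, so the unboundedness of $S_{n,m}$ never enters the comparison between $L$ and a model operator. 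To rescue the continuity approach you would need a different base point that genuinely satisfies Assumption \ref{assump:Coeff} globally, and then the "classical explicit kernels" you invoke would no longer be available.
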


\begin{thm}[Existence and uniqueness of solutions with continuous initial data]
\label{thm:ExistenceUniqueness_continuous}
Let $T>0$. Suppose that the coefficients of the differential operator $L$ satisfy Assumption \ref{assump:Coeff}, for all $k\in\NN$. Let $g \in C^{\infty}([0,T]\times\bar S_{n,m})$ and $f \in C(\bar S_{n,m})$. Then there is a unique solution, $u \in C([0,T]\times\bar S_{n,m})\cap C^{\infty}((0,T]\times\bar S_{n,m})$, to the inhomogeneous initial-value problem \eqref{eq:Inhom_initial_value_problem}. Moreover, for all $\alpha\in (0,1)$, $k\in\NN$ and $T_0\in (0,T)$, there is a positive constant, $C=C(\alpha,\delta,k,K,m,n,T_0,T)$, such that
\begin{equation}
\label{eq:Solution_estimate_continuous}
\begin{aligned}
\|u\|_{C^{k,2+\alpha}_{WF}([T_0,T]\times \bar S_{n,m})} 
&\leq C\left(\|g\|_{C^{k,\alpha}_{WF}([0,T]\times\bar S_{n,m})}  + \|f\|_{C(\bar S_{n,m})}  \right).
\end{aligned}
\end{equation}
\end{thm}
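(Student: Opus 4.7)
The plan is to realize $u$ as a uniform limit of solutions with H\"older-regular initial data, and then to invoke Theorem \ref{thm:Time_interior_estimate} to upgrade this limit to a smooth function on $(0,T]\times\bar S_{n,m}$. First, I would approximate the continuous datum $f$ by a sequence $\{f_j\}\subset C^{k,2+\alpha}_{WF}(\bar S_{n,m})$ with $\|f_j-f\|_{C(\bar S_{n,m})}\to 0$; such a sequence can be produced by mollification adapted to the Kimura metric near the singular boundary (essentially the construction used by Epstein--Mazzeo to establish density of smooth functions in the WF H\"older spaces). By Theorem \ref{thm:ExistenceUniqueness}, applied with the given source $g$, for each $j$ there is a unique $u_j \in C^{k,2+\alpha}_{WF}([0,T]\times\bar S_{n,m})$ solving \eqref{eq:Inhom_initial_value_problem} with $u_j(0,\cdot)=f_j$.

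Because the difference $u_j-u_k$ satisfies $\partial_t(u_j-u_k)-L(u_j-u_k)=0$ with initial datum $f_j-f_k$, and the drifts $b_i$ are nonnegative along $\partial S_{n,m}$, the maximum principle for generalized Kimura operators yields
\begin{equation*}
\|u_j-u_k\|_{C([0,T]\times\bar S_{n,m})}\leq \|f_j-f_k\|_{C(\bar S_{n,m})},
\end{equation*}
so $\{u_j\}$ is Cauchy in $C([0,T]\times\bar S_{n,m})$ and converges uniformly to some $u\in C([0,T]\times\bar S_{n,m})$ with $u(0,\cdot)=f$. Applying Theorem \ref{thm:Time_interior_estimate} to the same difference, now exploiting that it solves the \emph{homogeneous} equation, gives
\begin{equation*}
\|u_j-u_k\|_{C^{k,2+\alpha}_{WF}([T_0,T]\times\bar B_r(z^0))}\leq C\,\|u_j-u_k\|_{C([T_0/2,T]\times\bar B_{2r}(z^0))}
\end{equation*}
for every $T_0\in(0,T)$, $k\in\NN$, $\alpha\in(0,1)$, $z^0\in\bar S_{n,m}$, and $r\in(0,r_0)$. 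The right-hand side vanishes in the limit, so $\{u_j\}$ is Cauchy in each space $C^{k,2+\alpha}_{WF}([T_0,T]\times\bar B_r(z^0))$. Since $k,\alpha,T_0,r,z^0$ are arbitrary, $u\in C^{\infty}((0,T]\times\bar S_{n,m})$, and passing to the limit in $\partial_t u_j-Lu_j=g$ shows that $u$ solves \eqref{eq:Inhom_initial_value_problem}.

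For the quantitative estimate \eqref{eq:Solution_estimate_continuous}, I would apply Theorem \ref{thm:Time_interior_estimate} directly to $u$, take the supremum over $z^0\in\bar S_{n,m}$ in the resulting local bound, and then insert the maximum-principle estimate $\|u\|_{C([0,T]\times\bar S_{n,m})}\leq \|f\|_{C(\bar S_{n,m})}+T\|g\|_{C([0,T]\times\bar S_{n,m})}$. Uniqueness is immediate: the difference of two such solutions solves the homogeneous problem with zero initial data, and the maximum principle forces it to vanish identically.

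The main obstacle is the approximation step: producing a sequence $\{f_j\}$ inside the anisotropic WF H\"older class that converges uniformly on all of $\bar S_{n,m}$ to an arbitrary continuous $f$. The WF spaces rescale anisotropically near $\{x_i=0\}$, so naive convolution in the $x$-variables does not preserve H\"older regularity of the required type; one must mollify in the coordinates $\xi_i=2\sqrt{x_i}$ (in which the Kimura metric becomes Euclidean) or, alternatively, use the heat semigroup of a model Kimura operator as a regularizing family, both standard devices from \cite{Epstein_Mazzeo_annmathstudies}. A secondary but more routine subtlety is justifying the passage from the local Schauder estimate to the global bound on $[T_0,T]\times\bar S_{n,m}$, which requires a locally finite covering of $\bar S_{n,m}$ together with the uniformity in $z^0$ of the constant in Theorem \ref{thm:Time_interior_estimate}.
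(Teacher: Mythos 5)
Your proposal is correct and follows the same basic strategy as the paper (approximate $f$ uniformly by regular data, solve via Theorem \ref{thm:ExistenceUniqueness}, use the comparison/maximum principle for sup-norm control, then invoke Theorem \ref{thm:Time_interior_estimate} to get regularity on $[T_0,T]$). One point of genuine divergence is worth recording: you apply Theorem \ref{thm:Time_interior_estimate} to the differences $u_j-u_k$, which solve the homogeneous equation, and thereby show that $\{u_j\}$ is Cauchy in $C^{k,2+\alpha}_{WF}([T_0,T]\times\bar B_r(z^0))$, so the whole sequence converges there. The paper instead applies the estimate to each $u_N$ to get uniform $C^{k,2+\alpha}_{WF}$-bounds and then invokes the Arzel\`a--Ascoli theorem (together with \cite[Proposition 5.2.8]{Epstein_Mazzeo_annmathstudies}) to extract a subsequence converging in $C^{k,2+\alpha'}_{WF}$ for $\alpha'<\alpha$. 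Your route is slightly more direct: it avoids the compactness argument, the passage to a subsequence, and the loss of H\"older exponent, and it identifies the limit automatically.

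On the other hand, the ``main obstacle'' you flag is actually not an obstacle for this argument. You do not need $f_j\to f$ in any H\"older norm, only $\|f_j-f\|_{C(\bar S_{n,m})}\to 0$ together with $f_j$ lying in the WF H\"older class; and a smooth function with bounded derivatives of all orders automatically belongs to $C^{k,2+\alpha}_{WF}(\bar S_{n,m})$ for every $k$ and $\alpha$, because near $\partial S_{n,m}$ the singular distance $\rho_0$ dominates the Euclidean distance (so $C^\alpha$-regularity for $\rho_0$ is weaker than ordinary $\alpha$-H\"older regularity), and the additional weights $\sqrt{x_i}$, $x_i$ appearing in the $C^{2+\alpha}_{WF}$ norm are themselves bounded and $C^\alpha_{WF}$. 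The paper simply takes $f_N\in C^\infty(\bar S_{n,m})$ with $\|f_N-f\|_C\to 0$. That said, a small refinement you should make for clean bookkeeping: take the approximants $f_j$ smooth (or at least in $\bigcap_{k,\alpha}C^{k,2+\alpha}_{WF}$) rather than in a single $C^{k,2+\alpha}_{WF}$, so that the same sequence $\{u_j\}$ serves simultaneously for all $k$ and $\alpha$; otherwise you need an extra appeal to uniqueness to identify the limits obtained for different $(k,\alpha)$.
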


\begin{rmk}[Coefficients of the operator $L$]
The coefficients of the operator $L$ are assumed to be functions only of the spatial variables, but it is straightforward to extend our results to time-dependent coefficients. Moreover the coefficients are assumed to be bounded functions. This restriction can be removed and replaced by a linear growth of the coefficients in the spatial variables, by incorporating a weight in the definition of the anisotropic H\"older spaces to take into account the growth of the coefficients, similarly to \cite[\S 2]{Feehan_Pop_mimickingdegen_pde}.
\end{rmk}

The proofs of Theorems \ref{thm:Time_interior_estimate} and \ref{thm:Interior_estimate} are based on a localization procedure described by N. V. Krylov in the proof of \cite[Theorem 8.11.1]{Krylov_LecturesHolder}. For this method to work, we need interpolation inequalities for our anisotropic H\"older spaces, which we establish in \S \ref{subsec:Interpolation_inequalities}, and we need global a priori Schauder estimates for model operators, which were established by C. Epstein and R. Mazzeo in \cite[Theorem 10.0.2]{Epstein_Mazzeo_annmathstudies}. These ideas are applicable to a more general functional analytical framework, where global a priori estimates and interpolation inequalities hold, and it was previously employed by P. Feehan and the author in the study of the regularity of solutions defined by a different class of degenerate elliptic equations with applications in Mathematical Finance \cite{Feehan_Pop_mimickingdegen_pde}.

\subsection{Comparison with previous research}
C. Epstein and R. Mazzeo prove in \cite[Corollary 3.2]{Epstein_Mazzeo_cont_est_diag} smoothness of solutions to the homogeneous initial-value problem defined by the operator $L$ with continuous, compactly supported data, under the assumption that the operator $L$ has a special diagonal structure, that is, the coefficients of the cross-terms in \eqref{eq:Generator} are $0$, and the drift coefficients $b_i(z)$ are bounded from below by a positive constant on $\bar S_{n,m}$. In Theorem \ref{thm:ExistenceUniqueness_continuous} we extend this result by not requiring any special structure of the operator $L$, other than the one implied by \eqref{eq:Generator} and by Assumption \ref{assump:Coeff}, and we prove local a priori Schauder estimates in Theorems \ref{thm:Time_interior_estimate} and \ref{thm:Interior_estimate}.

The results of \cite{Epstein_Mazzeo_cont_est_diag} are further extended in \cite[Theorem 1.1]{Epstein_Mazzeo_cont_est}, where the authors prove smoothness of solutions to the homogenous Kimura initial-value problem on compact manifolds with corners, $P$, when the initial data is assumed to belong to the weighted Sobolev space, $L^2(P, d\mu_L)$ (for the definition of the weight function $d\mu_L$ see \cite[\S 2]{Epstein_Mazzeo_cont_est}). The method of the proof of \cite{Epstein_Mazzeo_cont_est} is based on writing the Kimura operator in divergence form and applying the method of Moser iterations \cite{Moser_1964, Moser_1967, Moser_1971}. For this method to work, the authors prove that the weight function $d\mu_L$ is a doubling measure (\cite[Proposition 3.1]{Epstein_Mazzeo_cont_est}), and that a suitable $L^2$-invariant Poincar\'e inequality holds (\cite[Theorem 3.1]{Epstein_Mazzeo_cont_est}). As a consequence, it is established in \cite[Corollaries 4.1 and 4.2]{Epstein_Mazzeo_cont_est} that there is a H\"older exponent, $\alpha_0\in (0,1)$, such that the $C^{\alpha_0}_{WF}$-norm of the solution can be controlled in terms of its sup-norm. Comparing this result with our Theorem \ref{thm:Time_interior_estimate}, we prove that for \emph{all} $\alpha \in (0,1)$ and for all positive integers, $k$, the $C^{k,2+\alpha}_{WF}$-norm of the solution can be controlled in terms of the sup-norm of the initial data. In addition, our method of the proof appears to be more direct and it uses interpolation inequalities adapted to the anisotropic H\"older spaces (\S \ref{subsec:Interpolation_inequalities}) and a localization procedure due to N. V. Krylov (\cite{Krylov_LecturesHolder}).

\subsection{Applications of the main results}
We use the existence result in Theorem \ref{thm:ExistenceUniqueness} to establish the uniqueness in law and the strong Markov property of solutions to the standard Kimura stochastic differential equation and its singular drift perturbations in \cite[\S 2.3 and \S 3.2]{Pop_2013a}. The fact that we only require the coefficients of the operator $L$ to be H\"older continuous, allows us to also assume that the coefficients of the Kimura stochastic differential equation are only H\"older continuous, and so, our results in \cite[\S 2.3 and 3.2]{Pop_2013a} generalize the classical existence and uniqueness theorems of solutions to stochastic differential equations with Lipschitz continuous coefficients \cite[\S 5.2.B]{KaratzasShreve1991}. They also generalize the existence and uniqueness of weak solutions to a closely related degenerate stochastic differential equations studied in \cite{Athreya_Barlow_Bass_Perkins_2002, Bass_Perkins_2003}. Moreover the existence, uniqueness and the strong Markov property of weak solutions to Kimura stochastic differential equations and its singular drift perturbation are crucial ingredients in our proof of the Harnack inequality for nonnegative solutions to the homogeneous parabolic equation $u_t-Lu=0$, which we establish in forthcoming work joint with C. Epstein \cite[Theorem 7.6]{Epstein_Pop_2013b}.

\subsection{Outline of the article}
In \S \ref{sec:Holder_spaces}, building on the work of C. Epstein and R. Mazzeo \cite{Epstein_Mazzeo_annmathstudies}, we introduce anisotropic H\"older spaces adapted to our framework, and we prove interpolation inequalities for the new H\"older spaces in Proposition \ref{prop:InterpolationIneqS} and Corollary \ref{cor:Higher_order_interpolation_inequalities}. In \S \ref{sec:Time_interior_estimate} we begin by stating in Assumption \ref{assump:Coeff} the conditions satisfied by the coefficients of the operator $L$, and we then give the proofs of Theorems \ref{thm:Time_interior_estimate} and \ref{thm:Interior_estimate}. We prove Theorems \ref{thm:ExistenceUniqueness} and \ref{thm:ExistenceUniqueness_continuous} in \S \ref{sec:Existence_solutions}. In \S \ref{subsec:Notations}, we list the notations used in our article.

\subsection{Notations and conventions}
\label{subsec:Notations}
Let $\NN:=\{0,1,2,3,\ldots\}$. Given a positive integer $k$, we let $\NN^k$ denote the set of multi-indices $\alpha=(\alpha_1,\ldots,\alpha_k)\in\NN^k$, and we let $|\alpha|:=\alpha_1+\ldots+\alpha_k$. Given a finite set of elements, $F$, we let $|F|$ denote the cardinal of $F$. Let $B_r(z)$ denote the Euclidean ball centered at a point $z\in\bar S_{n,m}$ of radius $r$, relative to the domain $S_{n,m}$.

\subsection{Acknowledgment}
The author is indebted to Charles Epstein for suggesting this problem and for many very helpful discussions on this subject.

\section{Anisotropic H\"older spaces}
\label{sec:Holder_spaces}
In this section, we introduce the anisotropic H\"older spaces suitable for obtaining a priori Schauder estimates of solutions to the inhomogeneous initial-value problem \eqref{eq:Inhom_initial_value_problem}. The H\"older spaces defined in \S \ref{subsec:Holder_spaces} are a slight modification of the H\"older spaces introduced by C. Epstein and R. Mazzeo in their study of the existence, uniqueness and regularity of solutions to the parabolic problem defined by generalized Kimura operators \cite{Epstein_Mazzeo_2010, Epstein_Mazzeo_annmathstudies}. We then establish in \S \ref{subsec:Interpolation_inequalities} the interpolation inequalities satisfied by the anisotropic H\"older spaces. These properties will be a main ingredient in the proofs of the results in our article.

\subsection{Definition of the anisotropic H\"older spaces}
\label{subsec:Holder_spaces}
Following \cite[Chapter 5]{Epstein_Mazzeo_annmathstudies}, we need to first introduce a \emph{distance function}, $\rho$, which takes into account the degeneracy of the second-order coefficient matrix of the operator $L$. We let
\begin{equation}
\label{eq:rho}
\rho((t^0,z^0),(t,z)) := \rho_0(z^0,z) + \sqrt{|t^0-t|},\quad\forall\, (t^0,z^0), (t,z) \in [0,\infty)\times\bar S_{n,m},
\end{equation}
where $\rho_0$ is a distance function in the spatial variables. Because our domain $S_{n,m}$ is unbounded, as opposed to the compact manifolds considered in \cite{Epstein_Mazzeo_annmathstudies}, the properties of the distance function $\rho_0(z^0,z)$ depend on whether the points $z^0$ and $z$ are in a neighborhood of the boundary of $S_{n,m}$, or far away from the boundary of $S_{n,m}$. For any set of indices, $I\subseteq\{1,\ldots,n\}$, we let
\begin{align}
\label{eq:M_I}
M_I:=\left\{z=(x,y) \in S_{n,m}: x_i \in (0,1)\hbox{ for all } i \in I,\hbox{ and } x_j \in (1,\infty)\hbox{ for all } j \in I^c\right\},
\end{align}
where we denote $I^c:=\{1,2,\ldots,n\}\backslash I$. The distance function $\rho_0$ has the property that there is a positive constant, $c=c(n,m)$, such that for all sets of indices, $I, J\subseteq\{1,\ldots,n\}$, and all $z^0\in\bar M_I$ and $z\in\bar M_J$, we have that
\begin{equation}
\label{eq:Equivalent_intrinsic_metric}
\begin{aligned}
&c\left(\max_{i\in I\cap J} \left|\sqrt{x^0_i}-\sqrt{x_i}\right| + \max_{j\in (I\cap J)^c} |x^0_j-x_j| +\max_{l\in\{1,\ldots,m\}}|y^0_l-y_l|\right)\\
&\leq \rho(z^0,z)\\
&\leq c^{-1}\left(\max_{i\in I\cap J} \left|\sqrt{x^0_i}-\sqrt{x_i}\right| + \max_{j\in (I\cap J)^c} |x^0_j-x_j| +\max_{l\in\{1,\ldots,m\}}|y^0_l-y_l|\right).
\end{aligned}
\end{equation}
Let $k\in\NN$, $T>0$, and $U\subseteq S_{n,m}$. We let $C^k([0,T]\times U)$ denote the space consisting of functions $u:[0,T]\times U\rightarrow \RR$ that are continuous and locally bounded, and we let  $C^k([0,T]\times\bar U)$ denote the Banach space of functions $u:[0,T]\times\bar U\rightarrow \RR$, with continuous, bounded derivatives up to order $k$, endowed with the norm,
$$
\|u\|_{C^k([0,T]\times\bar U)} := \sum_{\stackrel{\tau\in\NN,\zeta\in\NN^{n+m}}{2\tau+|\zeta|\leq k}} \sup_{(t,z)\in [0,T]\times\bar U} |D^{\tau}_tD^{\zeta}_z u(t,z)|.
$$
We let $C^{\infty}([0,T]\times\bar U)$ be the space of smooth functions $u:[0,T]\times\bar U\rightarrow \RR$, with continuous and bounded derivatives of all orders, and we let $C^{\infty}_c([0,T]\times\bar U)$ be the space of smooth functions with compact support in $[0,T]\times\bar U$.

We recall the definition of the standard parabolic H\"older spaces \cite[\S 8.5]{Krylov_LecturesHolder}. Let $\alpha\in (0,1)$. Then $C^{0,\alpha}([0,T]\times\bar U)$ denotes the H\"older spaces of functions $u:[0,T]\times\bar U\rightarrow\RR$, such that
$$
\|u\|_{C^{0,\alpha}([0,T]\times\bar U)} := \|u\|_{C^0([0,T]\times\bar U)} 
+ \sup_{\stackrel{(t^0,z^0), (t,z)\in [0,T]\times \bar U}{(t^0,z^0)\neq (t,z)}} 
\frac{|u(t^0,z^0)-u(t,z)|}{\left(|z-z^0|+\sqrt{|t-t^0|}\right)^{\alpha}}.
$$
The space $C^{k,\alpha}([0,T]\times\bar U)$ consists of functions $u:[0,T]\times\bar U\rightarrow\RR$, such that for all $\tau\in\NN$ and $\zeta\in\NN^{n+m}$ satisfying the property that $2\tau+|\zeta| \leq k$, we have 
$$
D^{\tau}_t D^{\zeta}_z u \in C^{0,\alpha}([0,T]\times\bar U),
$$
and we endow the space $C^{k,\alpha}([0,T]\times\bar U)$ with the norm:
$$
\|u\|_{C^{k,\alpha}([0,T]\times\bar U)} := \sum_{\stackrel{\tau\in\NN, \zeta\in\NN^{n+m}}{2\tau+|\zeta|\leq k}}
 \|D^{\tau}_t D^{\zeta}_z\|_{C^{0,\alpha}([0,T]\times \bar U)}.
$$
Following \cite[\S 5.2.4]{Epstein_Mazzeo_annmathstudies}, we can now introduce the anisotropic H\"older spaces suitable to establish a priori Schauder estimates for solutions to the inhomogeneous initial-value problem \eqref{eq:Inhom_initial_value_problem}. We let $C^{0,\alpha}_{WF}([0,T]\times \bar U)$ be the H\"older space consisting of continuous functions, $u:[0,T]\times\bar U\rightarrow \RR$, such that the following norm is finite
$$
\|u\|_{C^{0,\alpha}_{WF}([0,T]\times \bar U)} := \|u\|_{C^0([0,T]\times\bar U)} 
+ \sup_{\stackrel{(t^0,z^0), (t,z)\in [0,T]\times \bar U}{(t^0,z^0)\neq (t,z)}} 
\frac{|u(t^0,z^0)-u(t,z)|}{\rho^{\alpha}((t^0,z^0), (t,z))}. 
$$
We let $C^{k,\alpha}_{WF}([0,T]\times \bar U)$ denote the H\"older space containing functions, $u\in C^k([0,T]\times\bar U)$, such that the derivatives $D^{\tau}_t D^{\zeta}_z$ belong to the space $C^{0,\alpha}_{WF}([0,T]\times \bar U)$, for all $\tau\in\NN$ and $\zeta\in\NN^{n+m}$, such that $2\tau+|\zeta| \leq k$. We endow the space $C^{k,\alpha}_{WF}([0,T]\times \bar U)$ with the norm,
\begin{align*}
\|u\|_{C^{k,\alpha}_{WF}([0,T]\times \bar U)} &:= \sum_{\stackrel{\tau\in\NN, \zeta\in\NN^{n+m}}{2\tau+|\zeta|\leq k}}
 \|D^{\tau}_t D^{\zeta}_z\|_{C^{0,\alpha}_{WF}([0,T]\times \bar U)}.
\end{align*}
We fix a set of indices, $I\subseteq \{1,\ldots, n\}$. Let $U$ be a set such that $U \subseteq M_{I}$. We let $C^{0,2+\alpha}_{WF}([0,T]\times \bar U)$ denote the H\"older space of functions, $u\in C^{1,\alpha}_{WF}([0,T]\times\bar U)\cap C^2([0,T]\times U)$, such that 
$$
u_t \in C^{0,\alpha}_{WF}([0,T]\times \bar U),
$$
and such that the functions,
\begin{align*}
\sqrt{x_ix_j}u_{x_ix_j}, \sqrt{x_i}u_{x_iy_l}, u_{y_ly_k} &\in C^{0,\alpha}_{WF}([0,T]\times \bar U),
\quad \forall\, i,j\in I,\quad \forall\, l,k=1,\ldots,m,\\
\sqrt{x_i}u_{x_i x_j}, u_{x_jx_k} &\in C^{0,\alpha}_{WF}([0,T]\times \bar U),\quad\forall\, i \in I, \quad\forall\, j,k \in I^c.
\end{align*}
We endowed the space $C^{0,2+\alpha}_{WF}([0,T]\times \bar U)$ with the norm,
\begin{align*}
\|u\|_{C^{0,2+\alpha}_{WF}([0,T]\times \bar U)} &:= \|u\|_{C^{1,\alpha}_{WF}([0,T]\times \bar U)} 
+ \sum_{i,j \in I}\|\sqrt{x_ix_j}u_{x_ix_j}\|_{C^{0,\alpha}_{WF}([0,T]\times \bar U)}\\
&\quad+ \sum_{l,k=1}^m\|u_{y_ly_k}\|_{C^{0,\alpha}_{WF}([0,T]\times \bar U)}
+ \sum_{i \in I} \sum_{j \in I^c}\|\sqrt{x_i}u_{x_ix_j}\|_{C^{0,\alpha}_{WF}([0,T]\times \bar U)}\\
&\quad+ \sum_{i \in I} \sum_{l=1}^m\|\sqrt{x_i}u_{x_iy_l}\|_{C^{0,\alpha}_{WF}([0,T]\times \bar U)}
+ \sum_{i, j \in I^c} \|u_{x_ix_j}\|_{C^{0,\alpha}_{WF}([0,T]\times \bar U)}\\
&\quad+ \sum_{i \in I^c} \sum_{l=1}^m \|u_{x_iy_l}\|_{C^{0,\alpha}_{WF}([0,T]\times \bar U)}
+\|u_t\|_{C^{0,\alpha}_{WF}([0,T]\times \bar U)} .
\end{align*}
We now consider the case when $U$ is an arbitrary set in $S_{n,m}$. We let $C^{0,2+\alpha}_{WF}([0,T]\times \bar U)$ denote the H\"older space consisting of functions $u\in C^2([0,T]\times U)$, satisfying the property that 
$$
u\upharpoonright_{\bar U\cap \bar M_{I}}\in C^{0,2+\alpha}_{WF}([0,T]\times(\bar U \cap \bar M_I)),\quad\forall\, I\subseteq\{1,\ldots,n\}.
$$
We endow the H\"older space $C^{0,2+\alpha}_{WF}([0,T]\times \bar U)$ with the norm
$$
\|u\|_{C^{0,2+\alpha}_{WF}([0,T]\times \bar U)} = \sum_{I \subseteq \{1,\ldots,n\}} \|u\|_{C^{0,2+\alpha}_{WF}([0,T]\times(\bar U \cap \bar M_I))}.
$$
We let $C^{k,2+\alpha}_{WF}([0,T]\times \bar U)$ be the space of functions $u\in C^k([0,T]\times U)$, satisfying the property that
$$
D^{\tau}_t D^{\zeta}_z u \in C^{0,2+\alpha}_{WF}([0,T]\times \bar U),\quad\forall\, \tau\in \NN,\forall\, \zeta\in\NN^{n+m}\hbox{ such that } 
2\tau+|\zeta| \leq k,
$$
and we endow it with the norm
\begin{align*}
\|u\|_{C^{k,2+\alpha}_{WF}([0,T]\times \bar U)} &:= \sum_{\stackrel{\tau\in \NN, \zeta\in\NN^{n+m}}{2\tau+|\zeta| \leq k}}
 \|D^{\tau}_t D^{\zeta}_z u\|_{C^{0,2+\alpha}_{WF}([0,T]\times \bar U)}.
\end{align*}
When $k=0$, we write for brevity $C([0,T]\times\bar U)$, $C^{\alpha}([0,T]\times \bar U)$, $C^{\alpha}_{WF}([0,T]\times \bar U)$ and $C^{2+\alpha}_{WF}([0,T]\times \bar U)$, instead of $C^0([0,T]\times\bar U)$, $C^{0,\alpha}([0,T]\times \bar U)$, $C^{0,\alpha}_{WF}([0,T]\times \bar U)$ and $C^{0,2+\alpha}_{WF}([0,T]\times \bar U)$.

The elliptic H\"older spaces $C^{k,\alpha}(\bar U)$, $C^{k,\alpha}_{WF}(\bar U)$ and $C^{k,2+\alpha}_{WF}(\bar U)$ are defined analogously to their parabolic counterparts, and so, we omit their definitions for brevity.

\subsection{Interpolation inequalities for anisotropic H\"older spaces}
\label{subsec:Interpolation_inequalities}
To prove the a priori Schauder estimates in Theorems \ref{thm:Time_interior_estimate} and \ref{thm:Interior_estimate}, and the existence and uniqueness of solutions in Theorems \ref{thm:ExistenceUniqueness} and \ref{thm:ExistenceUniqueness_continuous}, we need to develop suitable interpolation inequalities for the anisotropic H\"older spaces introduced in \S \ref{subsec:Holder_spaces}.  

For any set of indices, $I\subseteq\{1,\ldots,n\}$, we let
\begin{align}
\label{eq:M_I_prim}
M'_I&:=\left\{z=(x,y) \in S_{n,m}: x_i \in (0,1)\hbox{ for all } i \in I,\hbox{ and } x_j \in (1/2,\infty)\hbox{ for all } j \in I^c\right\},\\
\label{eq:M_I_secund}
M''_I&:=\left\{z=(x,y) \in S_{n,m}: x_i \in (0,2)\hbox{ for all } i \in I,\hbox{ and } x_j \in (1/4,\infty)\hbox{ for all } j \in I^c\right\},
\end{align}
where we recall that $I^c:=\{1,2,\ldots,n\}\backslash I$. Comparing the sets defined in \eqref{eq:M_I_prim} and \eqref{eq:M_I_secund} with the set $M_I$ defined in \eqref{eq:M_I}, we have that $M_I\subset M'_I\subset M''_I$.

We begin with

\begin{prop} [Interpolation inequalities]
\label{prop:InterpolationIneqS}
Let $T>0$ and $\alpha\in (0,1)$. Then there are positive constants, $C=C(\alpha, m, n, T)$ and $m_0=m_0(\alpha,m,n)$, such that for any function, $u \in C^{2+\alpha}_{WF}([0,T]\times\bar S_{n,m})$, and for all $\eps \in (0,1)$, the following hold:
\begin{align}
\label{eq:InterpolationIneqS1}
\|u\|_{C^{\alpha}_{WF}([0,T]\times\bar S_{n,m})} &\leq \eps \|u\|_{C^{2+\alpha}_{WF}([0,T]\times\bar S_{n,m})} 
+ C \eps^{-m_0} \|u\|_{C([0,T]\times\bar S_{n,m})},\\
\label{eq:InterpolationIneqS2}
\|u_{x_i}\|_{C([0,T]\times\bar S_{n,m})} &\leq \eps \|u\|_{C^{2+\alpha}_{WF}([0,T]\times\bar S_{n,m})} 
+ C \eps^{-m_0} \|u\|_{C([0,T]\times\bar S_{n,m})},\\
\label{eq:InterpolationIneqS2_prim}
\|u_{y_l}\|_{C([0,T]\times\bar S_{n,m})} &\leq \eps \|u\|_{C^{2+\alpha}_{WF}([0,T]\times\bar S_{n,m})} 
+ C \eps^{-m_0} \|u\|_{C([0,T]\times\bar S_{n,m})},\\
\label{eq:InterpolationIneqS2_secund}
\|u_t\|_{C([0,T]\times\bar S_{n,m})} &\leq \eps \|u\|_{C^{2+\alpha}_{WF}([0,T]\times\bar S_{n,m})} 
+ C \eps^{-m_0} \|u\|_{C([0,T]\times\bar S_{n,m})}.
\end{align}
Let $I\subseteq \{1,\ldots,n\}$, and assume in addition that the function $u$ has support in $[0,T]\times \bar M''_I$. Then, for all $l,k=1,\ldots,m$, the following hold:
\begin{align}
\label{eq:InterpolationIneqS4}
\|\sqrt{x_ix_j} u_{x_ix_j}\|_{C([0,T]\times\bar S_{n,m})} &\leq \eps \|u\|_{C^{2+\alpha}_{WF}([0,T]\times\bar S_{n,m})} 
+ C \eps^{-m_0} \|u\|_{C([0,T]\times\bar S_{n,m})},\quad\forall\, i,j\in I,\\
\label{eq:InterpolationIneqS4_prim}
\|\sqrt{x_i}u_{x_ix_j}\|_{C([0,T]\times\bar S_{n,m})} &\leq \eps \|u\|_{C^{2+\alpha}_{WF}([0,T]\times\bar S_{n,m})} 
+ C \eps^{-m_0} \|u\|_{C([0,T]\times\bar S_{n,m})},\quad\forall\, i \in I, j\in I^c,\\
\label{eq:InterpolationIneqS4_secund}
\|\sqrt{x_i}u_{x_iy_l}\|_{C([0,T]\times\bar S_{n,m})} &\leq \eps \|u\|_{C^{2+\alpha}_{WF}([0,T]\times\bar S_{n,m})} 
+ C \eps^{-m_0} \|u\|_{C([0,T]\times\bar S_{n,m})},\quad\forall\, i \in I,\\
\label{eq:InterpolationIneqS4_third}
\|u_{x_ix_j}\|_{C([0,T]\times\bar S_{n,m})} &\leq \eps \|u\|_{C^{2+\alpha}_{WF}([0,T]\times\bar S_{n,m})} 
+ C \eps^{-m_0} \|u\|_{C([0,T]\times\bar S_{n,m})},\quad\forall\, i,j \in I^c\\
\label{eq:InterpolationIneqS4_fourth}
\|u_{y_ly_k}\|_{C([0,T]\times\bar S_{n,m})} &\leq \eps \|u\|_{C^{2+\alpha}_{WF}([0,T]\times\bar S_{n,m})} 
+ C \eps^{-m_0} \|u\|_{C([0,T]\times\bar S_{n,m})},
\end{align}
and we also have that
\begin{align}
\label{eq:InterpolationIneqS3}
\|x_i u_{x_i}\|_{C^{\alpha}_{WF}([0,T]\times\bar S_{n,m})} &\leq \eps \|u\|_{C^{2+\alpha}_{WF}([0,T]\times\bar S_{n,m})} 
+ C \eps^{-m_0} \|u\|_{C([0,T]\times\bar S_{n,m})},\quad\forall\, i\in I,\\
\label{eq:InterpolationIneqS3_prim}
\|u_{x_j}\|_{C^{\alpha}_{WF}([0,T]\times\bar S_{n,m})} &\leq \eps \|u\|_{C^{2+\alpha}_{WF}([0,T]\times\bar S_{n,m})} 
+ C \eps^{-m_0} \|u\|_{C([0,T]\times\bar S_{n,m})},\quad\forall\, j\in I^c,\\
\label{eq:InterpolationIneqS3_secund}
\|u_{y_l}\|_{C^{\alpha}_{WF}([0,T]\times\bar S_{n,m})} &\leq \eps \|u\|_{C^{2+\alpha}_{WF}([0,T]\times\bar S_{n,m})} 
+ C \eps^{-m_0} \|u\|_{C([0,T]\times\bar S_{n,m})}.
\end{align}
\end{prop}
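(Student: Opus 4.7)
The plan is to reduce these interpolation inequalities to the classical parabolic interpolation inequalities on standard Hölder spaces, via the square-root substitution that underlies the Epstein--Mazzeo construction.

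First I would observe that each inequality is local with respect to the pieces $\{\bar M_I\}_{I\subseteq\{1,\ldots,n\}}$. By the very definition of $\|\cdot\|_{C^{k,2+\alpha}_{WF}([0,T]\times\bar S_{n,m})}$ as a sum over $I$ of the corresponding norms on $[0,T]\times(\bar S_{n,m}\cap\bar M_I)$, it suffices to prove each inequality on one piece $\bar M_I$, using the norm on the slightly enlarged piece $\bar M_I''\supset\bar M_I'\supset\bar M_I$. For the inequalities \eqref{eq:InterpolationIneqS4}--\eqref{eq:InterpolationIneqS3_secund}, the standing support assumption in $[0,T]\times\bar M_I''$ means that we may work globally in this single piece without worrying about artificial boundary effects near $x_j=1/4$ or $x_i=2$.

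Next, on $\bar M_I''$ I would introduce the change of coordinates
\[
\xi_i = 2\sqrt{x_i}\ (i\in I),\qquad \xi_j = x_j\ (j\in I^c),\qquad \eta_l = y_l,
\]
which maps $\bar M_I''$ diffeomorphically onto an open product of intervals $\bar\Omega_I\subset\RR^{n+m}$, with the $\xi_i$-axis for $i\in I$ meeting the boundary $\{\xi_i=0\}$. By the metric equivalence \eqref{eq:Equivalent_intrinsic_metric}, the intrinsic distance $\rho$ on $[0,T]\times\bar M_I''$ is uniformly comparable to the standard parabolic distance on $[0,T]\times\bar\Omega_I$, so the anisotropic Hölder norm of $u$ in the $(t,x,y)$ variables is comparable to the classical parabolic Hölder norm of the transformed function in the $(t,\xi,\eta)$ variables. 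A direct computation gives the transformation law for the weighted second derivatives: for $i,j\in I$ with $i\neq j$, $j'\in I^c$, and $l\in\{1,\ldots,m\}$, one has
\[
\sqrt{x_ix_j}\,u_{x_ix_j}=u_{\xi_i\xi_j},\qquad x_iu_{x_ix_i}=u_{\xi_i\xi_i}-\tfrac{1}{\xi_i}u_{\xi_i},\qquad \sqrt{x_i}\,u_{x_ix_{j'}}=u_{\xi_i\xi_{j'}},\qquad \sqrt{x_i}\,u_{x_iy_l}=u_{\xi_i\eta_l},
\]
while the other weighted derivatives entering the $C^{2+\alpha}_{WF}$ norm remain unchanged up to bounded factors. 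In particular, the anisotropic $C^{k,2+\alpha}_{WF}$ norm on $\bar M_I''$ controls, and is controlled by, the standard parabolic $C^{k,2+\alpha}$ norm on $\bar\Omega_I$, modulo the correction term $u_{\xi_i}/\xi_i$ that arises on the diagonal.

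Third, I would invoke the classical parabolic interpolation inequalities (e.g., \cite[Theorem 8.8.1]{Krylov_LecturesHolder}) in the $(t,\xi,\eta)$ variables: for each $\eps\in(0,1)$ any intermediate norm of the transformed function is bounded by $\eps$ times the top-order parabolic Hölder norm plus $C\eps^{-m_0}$ times the sup norm, with $m_0$ depending only on the order of interpolation. Applying this to each derivative in turn and pulling back through the inverse change of variables yields all of \eqref{eq:InterpolationIneqS1}--\eqref{eq:InterpolationIneqS3_secund}: the factors $\sqrt{x_i}$ and $\sqrt{x_ix_j}$ appearing on the left-hand sides of \eqref{eq:InterpolationIneqS4}--\eqref{eq:InterpolationIneqS4_secund} are precisely what is needed to match the classical second-order derivative in the $\xi$-coordinates, while \eqref{eq:InterpolationIneqS4_third}--\eqref{eq:InterpolationIneqS4_fourth} follow from the trivial part of the change of coordinates. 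The first-order inequalities \eqref{eq:InterpolationIneqS2}--\eqref{eq:InterpolationIneqS2_secund} and the Hölder-seminorm inequalities \eqref{eq:InterpolationIneqS1}, \eqref{eq:InterpolationIneqS3}--\eqref{eq:InterpolationIneqS3_secund} are obtained in the same way, using a partition of unity subordinate to $\{M_I''\}_I$ to remove the support assumption (the derivatives of the cutoffs fall on terms already controlled by lower-order norms and can be absorbed by an adjustment of $\eps$ and $m_0$).

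The main obstacle is the correction term $u_{\xi_i}/\xi_i$ produced by the square-root substitution in $x_iu_{x_ix_i}$. This term must be absorbed by the classical interpolation; the key is that $u_{\xi_i}=\sqrt{x_i}\,u_{x_i}=(\xi_i/2)u_{x_i}$ vanishes at $\xi_i=0$ whenever $u_{x_i}$ is bounded, so by the fundamental theorem of calculus $|u_{\xi_i}(\xi)/\xi_i|\leq\sup|u_{\xi_i\xi_i}|$, which is controlled by the standard parabolic $C^2$ norm in $(\xi,\eta)$ and hence by the $C^{2+\alpha}_{WF}$ norm on the $(x,y)$ side. The remaining bookkeeping, namely carrying the argument through to higher orders $k\geq 1$ and verifying that the transformation law for $D^\tau_t D^\zeta_zu$ still matches standard parabolic derivatives up to lower-order correction terms, is straightforward by induction on $2\tau+|\zeta|$.
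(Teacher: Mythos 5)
Your proposal takes a genuinely different route from the paper. The paper proves each inequality directly in the original $(t,x,y)$ coordinates, by a case analysis on whether the coordinates of the two points being compared are close together or far apart, using the metric equivalence \eqref{eq:Equivalent_intrinsic_metric} and the mean value theorem at each step, and invoking \cite[Lemma 5.2.5]{Epstein_Mazzeo_annmathstudies} (vanishing of $\sqrt{x_ix_j}\,u_{x_ix_j}$ at the boundary) in one delicate case. Your approach instead substitutes $\xi_i=2\sqrt{x_i}$ and invokes the classical parabolic interpolation from \cite{Krylov_LecturesHolder} on the transformed function, which is conceptually cleaner and, if carried out carefully, avoids Lemma 5.2.5 altogether. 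What your approach pays for this is the bookkeeping around the non-equivalence of the two Hölder scales.

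Two points in your write-up are stated more strongly than you can actually have. First, you claim the $C^{2+\alpha}_{WF}$ norm of $u$ ``controls, and is controlled by'' the classical $C^{2,\alpha}$ norm of the pullback $v$, modulo the $v_{\xi_i}/\xi_i$ correction. The forward direction is all that is used and it is true: since $v_{\xi_i\xi_i}=x_iu_{x_ix_i}+\tfrac12u_{x_i}$ and the metric $\rho$ is comparable to the $\xi$-Euclidean metric, the WF norm of $u$ does control $\|v\|_{C^{2,\alpha}}$. The reverse direction, however, is false for a general $v\in C^{2,\alpha}$: if $v_{\xi_i}(0)\neq 0$, the quotient $v_{\xi_i}/\xi_i$ is unbounded, and even when $v_{\xi_i}(0)=0$, controlling $[v_{\xi_i}/\xi_i]_{C^\alpha}$ by $\|v\|_{C^{2,\alpha}}$ requires a Hardy-type argument. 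It is harmless here precisely because you only need the forward implication, but the phrasing should be tightened. Second, several of the target quantities in the proposition are not standard derivatives of $v$ at all: $u_{x_i}=2v_{\xi_i}/\xi_i$, $x_iu_{x_i}=\tfrac{\xi_i}{2}v_{\xi_i}$, $x_iu_{x_ix_i}=v_{\xi_i\xi_i}-v_{\xi_i}/\xi_i$. Saying that classical interpolation ``applied to each derivative in turn'' yields all of \eqref{eq:InterpolationIneqS1}--\eqref{eq:InterpolationIneqS3_secund} glosses over the extra steps: the $\mathrm{FTC}$ argument you sketch handles the sup bound on $v_{\xi_i}/\xi_i$, the Leibniz rule in the Hölder scale handles $\tfrac{\xi_i}{2}v_{\xi_i}$, and $v_{\xi_i\xi_i}$ must be controlled by the sum $[x_iu_{x_ix_i}]_{C^\alpha_{WF}}+[u_{x_i}]_{C^\alpha_{WF}}$ rather than directly. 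None of these steps is hard, but each is different, and a complete proof would spell them out. Finally, the partition-of-unity step used to remove the support hypothesis in \eqref{eq:InterpolationIneqS1}--\eqref{eq:InterpolationIneqS2_secund} should be written so that one first proves the supported versions, then uses $\sum_I\chi_I\equiv 1$ (so $\sum_I(\chi_I)_{x_i}=0$) together with the Leibniz estimate $\|\chi_I u\|_{C^{2+\alpha}_{WF}}\leq C\|u\|_{C^{2+\alpha}_{WF}}$, which is \cite[Inequality (5.62)]{Epstein_Mazzeo_annmathstudies}; otherwise the argument reads as circular. With these details supplied the route looks viable, and it is a legitimate alternative to the paper's more elementary (but case-heavy) computation.
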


We give the technical proof of Proposition \ref{prop:InterpolationIneqS} at the end of the section.

\begin{rmk}[The hypothesis in Proposition \ref{prop:InterpolationIneqS} about the support of the function $u$]
To prove inequalities \eqref{eq:InterpolationIneqS4}-\eqref{eq:InterpolationIneqS3_secund}, we assume that the function $u$ has support in $[0,T]\times \bar M''_I$, for some set of indices, $I\subseteq\{1,\ldots,n\}$. This is only because of the fact that the weights of the derivatives in the $x_i$-coordinates of $u$ differ depending on whether the index $i$ belongs to the set $I$, or its complement, $I^c$, that is, the $x_i$-coordinate is small or large, respectively. Inequalities \eqref{eq:InterpolationIneqS4_fourth} and \eqref{eq:InterpolationIneqS3_secund} hold without the assumption that the support of $u$ is contained in $[0,T]\times \bar M''_I$.
\end{rmk}

\begin{rmk}[Comparison between the interpolation inequalities in the standard H\"older spaces and in the anisotropic H\"older spaces]
Notice that Proposition \ref{prop:InterpolationIneqS} does not establish the analogue of \cite[Inequality (8.8.4)]{Krylov_LecturesHolder}, that is,
\[
[u_{x_i}]_{C^{\alpha}([0,T]\times\bar S_{n,m})} \leq \eps \|u\|_{C^{2,\alpha}([0,T]\times\bar S_{n,m})} 
+ C \eps^{-m_0} \|u\|_{C([0,T]\times\bar S_{n,m})}.
\]
This is replaced by the weighted inequality \eqref{eq:InterpolationIneqS3}, due to the fact that the anisotropic H\"older space $C^{2+\alpha}_{WF}([0,T]\times\bar S_{n,m})$ allows for more general functions than the standard H\"older space $C^{2,\alpha}([0,T]\times\bar S_{n,m})$.
\end{rmk}

We have the following corollary to Proposition \ref{prop:InterpolationIneqS}, which contains the interpolation inequalities for the higher-order anisotropic H\"older spaces.
\begin{cor}[Higher-order interpolation inequalities]
\label{cor:Higher_order_interpolation_inequalities}
Let $\alpha\in (0,1)$, $k\in\NN$ and $T>0$. Then there are positive constants, $C=C(\alpha, k, m, n, T)$ and $m_k=m_k(\alpha,k,m,n)$, such that for any function, $u \in C^{k,2+\alpha}_{WF}([0,T]\times\bar S_{n,m})$, the following hold. Let $\tau\in\NN$ and $\zeta\in\NN^{n+m}$ be such that $2\tau+|\zeta|\leq k$, then for all $\eps \in (0,1)$, we have
\begin{align}
\label{eq:InterpolationIneqS1_higher_order}
\|D^{\tau}_tD^{\zeta}_zu\|_{C^{\alpha}_{WF}([0,T]\times\bar S_{n,m})} &\leq \eps \|u\|_{C^{k,2+\alpha}_{WF}([0,T]\times\bar S_{n,m})} 
+ C \eps^{-m_k} \|u\|_{C([0,T]\times\bar S_{n,m})},\\
\label{eq:InterpolationIneqS2_higher_order}
\|D^{\tau}_tD^{\zeta}_zu_{x_i}\|_{C([0,T]\times\bar S_{n,m})} &\leq \eps \|u\|_{C^{k,2+\alpha}_{WF}([0,T]\times\bar S_{n,m})} 
+ C \eps^{-m_k} \|u\|_{C([0,T]\times\bar S_{n,m})},\\
\label{eq:InterpolationIneqS2_prim_higher_order}
\|D^{\tau}_tD^{\zeta}_zu_{y_l}\|_{C([0,T]\times\bar S_{n,m})} &\leq \eps \|u\|_{C^{k,2+\alpha}_{WF}([0,T]\times\bar S_{n,m})} 
+ C \eps^{-m_k} \|u\|_{C([0,T]\times\bar S_{n,m})},\\
\label{eq:InterpolationIneqS2_secund_higher_order}
\|D^{\tau}_tD^{\zeta}_z u_t\|_{C([0,T]\times\bar S_{n,m})} &\leq \eps \|u\|_{C^{2+\alpha}_{WF}([0,T]\times\bar S_{n,m})} 
+ C \eps^{-m_k} \|u\|_{C([0,T]\times\bar S_{n,m})}.
\end{align}
Let $I\subseteq \{1,\ldots,n\}$, and assume in addition that the function $u$ has support in $[0,T]\times \bar M''_I$. Then, for all $l,p=1,\ldots,m$, the following hold
\begin{align}
\label{eq:InterpolationIneqS4_higher_order}
\|\sqrt{x_ix_j} D^{\tau}_tD^{\zeta}_zu_{x_ix_j}\|_{C([0,T]\times\bar S_{n,m})} &\leq \eps \|u\|_{C^{k,2+\alpha}_{WF}([0,T]\times\bar S_{n,m})} 
+ C \eps^{-m_k} \|u\|_{C([0,T]\times\bar S_{n,m})},\quad\forall\, i,j\in I,\\
\label{eq:InterpolationIneqS4_prim_higher_order}
\|\sqrt{x_i}D^{\tau}_tD^{\zeta}_zu_{x_ix_j}\|_{C([0,T]\times\bar S_{n,m})} &\leq \eps \|u\|_{C^{k,2+\alpha}_{WF}([0,T]\times\bar S_{n,m})} 
+ C \eps^{-m_k} \|u\|_{C([0,T]\times\bar S_{n,m})},\quad\forall\, i \in I, j\in I^c,\\
\label{eq:InterpolationIneqS4_secund_higher_order}
\|\sqrt{x_i}D^{\tau}_tD^{\zeta}_zu_{x_iy_l}\|_{C([0,T]\times\bar S_{n,m})} &\leq \eps \|u\|_{C^{k,2+\alpha}_{WF}([0,T]\times\bar S_{n,m})} 
+ C \eps^{-m_k} \|u\|_{C([0,T]\times\bar S_{n,m})},\quad\forall\, i \in I,\\
\label{eq:InterpolationIneqS4_third_higher_order}
\|D^{\tau}_tD^{\zeta}_zu_{x_ix_j}\|_{C([0,T]\times\bar S_{n,m})} &\leq \eps \|u\|_{C^{k,2+\alpha}_{WF}([0,T]\times\bar S_{n,m})} 
+ C \eps^{-m_k} \|u\|_{C([0,T]\times\bar S_{n,m})},\quad\forall\, i,j \in I^c,\\
\label{eq:InterpolationIneqS4_fourth_higher_order}
\|D^{\tau}_tD^{\zeta}_zu_{y_ly_p}\|_{C([0,T]\times\bar S_{n,m})} &\leq \eps \|u\|_{C^{k,2+\alpha}_{WF}([0,T]\times\bar S_{n,m})} 
+ C \eps^{-m_k} \|u\|_{C([0,T]\times\bar S_{n,m})},
\end{align}
and we also have that
\begin{align}
\label{eq:InterpolationIneqS3_higher_order}
\|x_i D^{\tau}_tD^{\zeta}_zu_{x_i}\|_{C^{\alpha}_{WF}([0,T]\times\bar S_{n,m})} &\leq \eps \|u\|_{C^{k,2+\alpha}_{WF}([0,T]\times\bar S_{n,m})} 
+ C \eps^{-m_k} \|u\|_{C([0,T]\times\bar S_{n,m})},\quad\forall\, i\in I,\\
\label{eq:InterpolationIneqS3_prim_higher_order}
\|D^{\tau}_tD^{\zeta}_zu_{x_j}\|_{C^{\alpha}_{WF}([0,T]\times\bar S_{n,m})} &\leq \eps \|u\|_{C^{k,2+\alpha}_{WF}([0,T]\times\bar S_{n,m})} 
+ C \eps^{-m_k} \|u\|_{C([0,T]\times\bar S_{n,m})},\quad\forall\, j\in I^c,\\
\label{eq:InterpolationIneqS3_secund_higher_order}
\|D^{\tau}_tD^{\zeta}_zu_{y_l}\|_{C^{\alpha}_{WF}([0,T]\times\bar S_{n,m})} &\leq \eps \|u\|_{C^{k,2+\alpha}_{WF}([0,T]\times\bar S_{n,m})} 
+ C \eps^{-m_k} \|u\|_{C([0,T]\times\bar S_{n,m})}.
\end{align}
\end{cor}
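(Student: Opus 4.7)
The plan is to reduce Corollary \ref{cor:Higher_order_interpolation_inequalities} to Proposition \ref{prop:InterpolationIneqS} by applying the latter to $v := D^{\tau}_t D^{\zeta}_z u$ in place of $u$. The central observation is that, whenever $2\tau+|\zeta|\leq k$, the derivative $v$ belongs to $C^{2+\alpha}_{WF}([0,T]\times\bar S_{n,m})$ and satisfies
\begin{equation*}
\|v\|_{C^{2+\alpha}_{WF}([0,T]\times\bar S_{n,m})} \leq \|u\|_{C^{k,2+\alpha}_{WF}([0,T]\times\bar S_{n,m})},
\end{equation*}
directly from the definition of the higher-order anisotropic H\"older norm. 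Moreover, the support condition $\supp u\subseteq [0,T]\times\bar M''_I$ transfers to $v$, so the second block of inequalities in Proposition \ref{prop:InterpolationIneqS} remains applicable to $v$.

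The main intermediate step is to prove by induction on $j := 2\tau+|\zeta|$ the auxiliary estimate: for every $(\tau,\zeta)$ with $j\leq k$ there exist $C=C(\alpha,k,m,n,T)$ and $M_j=M_j(\alpha,k,m,n)$ such that
\begin{equation*}
\|D^{\tau}_t D^{\zeta}_z u\|_{C([0,T]\times\bar S_{n,m})}
\leq \eta\, \|u\|_{C^{k,2+\alpha}_{WF}([0,T]\times\bar S_{n,m})}
+ C\eta^{-M_j}\, \|u\|_{C([0,T]\times\bar S_{n,m})},
\end{equation*}
for every $\eta\in(0,1)$. The base case $j=0$ is trivial. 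For $j\geq 1$, factor $D^{\tau}_t D^{\zeta}_z = D_{\bullet} D^{\tau'}_t D^{\zeta'}_z$, where $D_{\bullet}\in\{\partial_{x_i},\partial_{y_l},\partial_t\}$ and $2\tau'+|\zeta'|<j$. Setting $w := D^{\tau'}_t D^{\zeta'}_z u$ and applying whichever of \eqref{eq:InterpolationIneqS2}, \eqref{eq:InterpolationIneqS2_prim}, or \eqref{eq:InterpolationIneqS2_secund} corresponds to $D_{\bullet}$, with $w$ in place of $u$, yields
\begin{equation*}
\|D^{\tau}_t D^{\zeta}_z u\|_C \leq \eta_1\, \|w\|_{C^{2+\alpha}_{WF}} + C\eta_1^{-m_0}\, \|w\|_C
\leq \eta_1\, \|u\|_{C^{k,2+\alpha}_{WF}} + C\eta_1^{-m_0}\, \|w\|_C,
\end{equation*}
and the inductive hypothesis applied to $\|w\|_C$ (with a parameter $\eta_2$) closes the induction once $\eta_1,\eta_2$ are chosen as appropriate powers of $\eta$.

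With the auxiliary estimate in hand, each of \eqref{eq:InterpolationIneqS1_higher_order}--\eqref{eq:InterpolationIneqS3_secund_higher_order} follows from a single application of the matching inequality from Proposition \ref{prop:InterpolationIneqS} to $v := D^{\tau}_t D^{\zeta}_z u$ with a parameter $\eps_1$: the term $\|v\|_{C^{2+\alpha}_{WF}}$ on the right is bounded by $\|u\|_{C^{k,2+\alpha}_{WF}}$, while the remaining $\|v\|_C$ is controlled by the auxiliary estimate with a parameter $\eps_2$ proportional to a suitable power of $\eps$. The main difficulty is purely combinatorial: one must verify that the exponents $M_j$ arising from the iterated interpolation combine into a single exponent $m_k$ depending only on $\alpha,k,m,n$ and independent of the particular derivative $D^{\tau}_t D^{\zeta}_z$. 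No analytic input beyond Proposition \ref{prop:InterpolationIneqS} is required.
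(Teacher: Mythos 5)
Your proposal is correct and follows essentially the same route as the paper: apply Proposition~\ref{prop:InterpolationIneqS} to $v=D^\tau_t D^\zeta_z u$ (using $\|v\|_{C^{2+\alpha}_{WF}}\leq\|u\|_{C^{k,2+\alpha}_{WF}}$ and the transfer of the support condition), then control the remaining $\|v\|_{C}$ term by iterating the first-order interpolation inequalities \eqref{eq:InterpolationIneqS2}--\eqref{eq:InterpolationIneqS2_secund} on lower-order derivatives. The paper compresses your inductive auxiliary estimate into a single sentence, but the underlying argument and bookkeeping are the same.
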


\begin{proof}
From the definition of the anisotropic H\"older spaces in \S \ref{subsec:Holder_spaces}, the function $D^{\tau}_tD^{\zeta}_zu$ belongs to the H\"older space $C^{0,2+\alpha}_{WF}([0,T]\times\bar U)$, for all $\tau\in\NN$, $\zeta\in\NN^{n+m}$ with the property that $2\tau+|\zeta|\leq k$, and so, Proposition \ref{prop:InterpolationIneqS} applies to the function $D^{\tau}_tD^{\zeta}_zu$. Thus, it is sufficient to show that inequality \eqref{eq:InterpolationIneqS1_higher_order} holds, because the rest of the interpolation inequalities, \eqref{eq:InterpolationIneqS2_higher_order}-\eqref{eq:InterpolationIneqS3_secund_higher_order}, follow by applying Proposition \ref{prop:InterpolationIneqS} to the function $D^{\tau}_tD^{\zeta}_zu$ instead of $u$, and using inequality \eqref{eq:InterpolationIneqS1_higher_order}. We now prove inequality \eqref{eq:InterpolationIneqS1_higher_order}.

Applying inequality \eqref{eq:InterpolationIneqS1} to the function $D^{\tau}_tD^{\zeta}_z u$ instead of $u$, and then applying  inequalities \eqref{eq:InterpolationIneqS2}, \eqref{eq:InterpolationIneqS2_prim} and \eqref{eq:InterpolationIneqS2_secund} to the derivatives of the function $u$, we obtain that there are positive constants, $C=C(\alpha,k,m,n,T)$ and $m_k=m_k(\alpha,k,m,n)$, such that for all $\eps\in (0,1)$, inequality \eqref{eq:InterpolationIneqS1} holds. This completes the proof.
\end{proof}

We use Corollary \ref{cor:Higher_order_interpolation_inequalities} to prove the following Lemmas \ref{lem:Estimate_Lu} and \ref{lem:Estimate_varphi_Lu}. Both Lemmas \ref{lem:Estimate_Lu} and \ref{lem:Estimate_varphi_Lu} are technical estimates used in the proofs of Theorems \ref{thm:Time_interior_estimate} and \ref{thm:ExistenceUniqueness}, respectively.

\begin{lem}[Estimate of $Lu$]
\label{lem:Estimate_Lu}
Let $\alpha\in (0,1)$, $k\in\NN$ and $T>0$. Let $I\subseteq\{1,\ldots,n\}$ and let $U$ be an open set in $M''_I$. Suppose that the coefficients of the operator $L$ satisfy property \eqref{eq:Holder_cont}. Then there are positive constants, $C=C(\alpha,k,K,m,n)$ and $m_k=m_k(\alpha,k,m,n)$, such that for all functions $u\in C^{k,2+\alpha}_{WF}([0,T]\times\bar S_{n,m})$ with support in $[0,T]\times\bar U$, we have that
\begin{equation}
\label{eq:Estimate_Lu}
\|Lu\|_{C^{k,\alpha}_{WF}([0,T]\times\bar U)} \leq (\Lambda+C\eps)\|u\|_{C^{k,2+\alpha}_{WF}([0,T]\times\bar U)}
 + C\eps^{-m_k} \|u\|_{C([0,T]\times\bar U)}.
\end{equation}
where the positive constant $\Lambda$ is given by
\begin{equation}
\label{eq:Constant_estimate_Lu}
\begin{aligned}
\Lambda&:=\sum_{i\in I}\|a_{ii}\|_{C(\bar U)}+\sum_{i\in I^c}\|x_ia_{ii}\|_{C(\bar U)} + \sum_{i,j\in I}\|\tilde a_{ij}\|_{C(\bar U)}\\
&\quad+\sum_{i\in I,\, j\in I^c}\|x_j\tilde a_{ij}\|_{C(\bar U)}+ \sum_{j\in I,\, i\in I^c}\|x_i\tilde a_{ij}\|_{C(\bar U)}+\sum_{i=1}^n\|b_i\|_{C(\bar U)}\\
&\quad+ \sum_{i\in I}\sum_{l=1}^m\|c_{il}\|_{C(\bar U)}+ \sum_{i\in I^c}\sum_{l=1}^m\|x_ic_{il}\|_{C(\bar U)}  +\sum_{l,p=1}^m\|d_{lp}\|_{C(\bar U)} + +\sum_{l=1}^m\|e_l\|_{C(\bar U)}. 
\end{aligned}
\end{equation}
\end{lem}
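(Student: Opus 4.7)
The plan is to expand $Lu$ as a finite linear combination of terms of the form $\phi(z)v(t,z)$, where $\phi$ is one of the coefficients of $L$ (possibly multiplied by $x_i$ for those $i\in I^c$ where this is needed to match the pattern of $\Lambda$), and $v$ is one of the weighted derivatives of $u$ appearing as a summand in the $C^{0,2+\alpha}_{WF}$-norm of \S \ref{subsec:Holder_spaces}. The hypothesis $\supp u\subset[0,T]\times\bar U\subset[0,T]\times\bar M''_I$ ensures that, after this decomposition, the $C(\bar U)$-norm of each $\phi$ is one of the summands in the definition \eqref{eq:Constant_estimate_Lu} of $\Lambda$, and $\|v\|_{C^{0,\alpha}_{WF}([0,T]\times\bar U)}\leq\|u\|_{C^{0,2+\alpha}_{WF}([0,T]\times\bar U)}$.

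For the case $k=0$, I apply the standard product rule for the weighted H\"older norm,
\begin{equation*}
\|\phi v\|_{C^{0,\alpha}_{WF}([0,T]\times\bar U)}\leq \|\phi\|_{C(\bar U)}\|v\|_{C^{0,\alpha}_{WF}([0,T]\times\bar U)}+[\phi]_{C^{0,\alpha}_{WF}(\bar U)}\|v\|_{C([0,T]\times\bar U)}.
\end{equation*}
Summed over the individual terms of $Lu$, the first piece produces $\Lambda\|u\|_{C^{0,2+\alpha}_{WF}}$, while the second piece is controlled via $[\phi]_{C^{0,\alpha}_{WF}}\leq K$ (Assumption \ref{assump:Coeff}) combined with the supremum-norm interpolation bounds \eqref{eq:InterpolationIneqS2}--\eqref{eq:InterpolationIneqS4_fourth} of Proposition \ref{prop:InterpolationIneqS}, which yield a contribution bounded by $C\eps\|u\|_{C^{0,2+\alpha}_{WF}}+C\eps^{-m_0}\|u\|_C$.

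For $k\geq 1$, I apply $D^\tau_t D^\zeta_z$ with $2\tau+|\zeta|\leq k$ to each product $\phi v$ and expand via the Leibniz rule. Every resulting summand is either (i) the principal term $\phi\cdot D^\tau_t D^\zeta_z v$, estimated in $C^{0,\alpha}_{WF}$ by the product rule above and contributing $\|\phi\|_{C(\bar U)}\|u\|_{C^{k,2+\alpha}_{WF}}$ plus a lower-order remainder to be absorbed by interpolation, or (ii) a mixed term of the form $(D^{\tau'}_t D^{\zeta'}_z\phi)(D^{\tau-\tau'}_t D^{\zeta-\zeta'}_z v)$ with $(\tau',\zeta')\neq(0,0)$, in which case Assumption \ref{assump:Coeff} bounds the $C^{0,\alpha}_{WF}$-norm of the first factor by $K$, and the second factor, being a strictly lower-order derivative of a weighted second-derivative of $u$, can be controlled by Corollary \ref{cor:Higher_order_interpolation_inequalities} via $\eps\|u\|_{C^{k,2+\alpha}_{WF}}+C\eps^{-m_k}\|u\|_C$.

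The main bookkeeping obstacle is to verify that every Leibniz-generated term falls into one of these two categories. In particular, whenever a spatial derivative $D_{x_i}$ lands on an explicit weight $x_i$ (as in $D_{x_i}(x_ia_{ii}u_{x_ix_i})=a_{ii}u_{x_ix_i}+x_ia_{ii}u_{x_ix_ix_i}$), the unweighted residue $a_{ii}u_{x_ix_i}$ is a strictly lower-order quantity that must be rewritten in terms of a weighted derivative already covered by the definition of $\|u\|_{C^{k-1,2+\alpha}_{WF}}$ before interpolation can be applied. Here the hypothesis $\supp u\subset[0,T]\times\bar M''_I$ plays its decisive role: on that set the weights $\sqrt{x_i}$, $x_i$, and $x_ix_j$ for $i,j\in I$ are mutually comparable and bounded, while $x_i$ for $i\in I^c$ stays uniformly away from zero, so every such lower-order residue can indeed be absorbed by the inequalities of Corollary \ref{cor:Higher_order_interpolation_inequalities}. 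Assembling the resulting estimates over all the terms of $Lu$ and choosing constants yields \eqref{eq:Estimate_Lu} with $\Lambda$ as in \eqref{eq:Constant_estimate_Lu}.
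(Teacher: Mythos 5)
Your overall scaffold matches the paper's: expand $D^\tau_t D^\zeta_z(Lu)$ via the Leibniz rule, treat each coefficient--derivative product with the pointwise multiplication bound (the paper cites \cite[Inequality (5.62)]{Epstein_Mazzeo_annmathstudies}), and absorb the lower-order summands using the interpolation inequalities of Proposition \ref{prop:InterpolationIneqS} and Corollary \ref{cor:Higher_order_interpolation_inequalities}; you also correctly isolate the delicate case where a derivative lands on the explicit weight $x_i$. But the mechanism you give for that case is wrong. You claim that on $\bar M''_I$ the weights $\sqrt{x_i}$, $x_i$, $x_ix_j$ are ``mutually comparable,'' and that therefore the unweighted residue $a_{ii}u_{x_ix_i}$ can be absorbed. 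They are not comparable: $x_i$ can be arbitrarily close to $0$ on $M''_I$, and $\sqrt{x_i}/x_i\to\infty$ there, so $u_{x_ix_i}$ is not controlled by $x_iu_{x_ix_i}$ or $\sqrt{x_ix_j}u_{x_ix_j}$, nor by $\|u\|_{C^{k-1,2+\alpha}_{WF}}$. The support hypothesis $\supp u\subset[0,T]\times\bar M''_I$ is used only to fix which indices are in the degenerate range (and to bound the weights from above); it does not give comparability between the different weight powers.

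The actual resolution the paper uses is a reassociation of derivatives, not a comparison of weights. When $\zeta_i\geq 1$, the term generated by differentiating the weight is
$$a_{ii}\,\bigl(D^\tau_t D^{\zeta-e_i}_z u\bigr)_{x_ix_i}=a_{ii}\,D^\tau_t D^{\zeta+e_i}_z u=a_{ii}\,\bigl(D^\tau_t D^{\zeta}_z u\bigr)_{x_i},$$
which is a \emph{first} $x_i$-derivative of $D^\tau_t D^\zeta_z u$; since $2\tau+|\zeta|\leq k$ and $u\in C^{k,2+\alpha}_{WF}$, the function $D^\tau_t D^\zeta_z u$ lies in $C^{0,2+\alpha}_{WF}\subset C^{1,\alpha}_{WF}$, so this first derivative lies in $C^{0,\alpha}_{WF}$ and is directly bounded by $\|u\|_{C^{k,2+\alpha}_{WF}}$; its sup norm is then absorbed by \eqref{eq:InterpolationIneqS2_higher_order}. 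Without this reassociation step your argument does not close: the unweighted second derivative is genuinely outside the class controlled by the lower-order $WF$-norm, and ``comparability of weights'' does not rescue it.
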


\begin{rmk}
\label{rmk:Reduction_to_one_set_of_indices}
Given a set of indices $I\subseteq\{1,\ldots,n\}$ and a function $u$ with support in $[0,T]\times\bar M''_I$, notice that there is no loss of generality in assuming that $I=\{1,\ldots,n\}$, because for any other set $I\neq \{1,\ldots,n\}$, the $x_j$-variables, for $j\in I^c $, can be treated as the $y_l$-variables, for all $l=1,\ldots,m$. That is, by relabeling the variables, we may replace the space $S_{n,m}$ by $S_{n',m'}$, where $n'=|I|$ and $m'=m+n-|I|$, so that the support of the function $u$ becomes a subset of $\bar M''_{\{1,\ldots,n'\}}$. Note that $n'$ may be zero.
\end{rmk}

\begin{proof}[Proof of Lemma \ref{lem:Estimate_Lu}]
Remark \ref{rmk:Reduction_to_one_set_of_indices} shows that we may assume without loss of generality that $U\subseteq M''_{\{1,\ldots,n\}}$. Under the assumption that $U\subseteq M''_{\{1,\ldots,n\}}$, the definition of the constant $\Lambda$ in \eqref{eq:Constant_estimate_Lu} simplifies to
\begin{equation}
\label{eq:Constant_estimate_Lu_aux}
\begin{aligned}
\Lambda&:=\sum_{i=1}^n\|a_{ii}\|_{C(\bar U)} + \sum_{i,j=1}^n\|\tilde a_{ij}\|_{C(\bar U)}+\sum_{i=1}^n\|b_i\|_{C(\bar U)}\\
&\quad+ \sum_{i=1}^n\sum_{l=1}^m\|c_{il}\|_{C(\bar U)}  +\sum_{l,p=1}^m\|d_{lp}\|_{C(\bar U)} +\sum_{l=1}^m\|e_l\|_{C(\bar U)}. 
\end{aligned}
\end{equation}
Let $\tau\in\NN$ and $\zeta\in \NN^{n+m}$ be such that $2\tau+|\zeta| \leq k$. We need to show that the functions $D^{\tau}_tD^{\zeta}_z(Lu)$ have $C^{\alpha}_{WF}([0,T]\times\bar U)$-H\"older norm bounded by the right-hand side of inequality \eqref{eq:Estimate_Lu}. That is, for all $i,j=1,\ldots,n$ and all $l,p=1,\ldots,m$, the functions
\begin{equation}
\label{eq:All_functions}
\begin{aligned}
&D^{\tau}_tD^{\zeta}_z(x_ia_{ii}u_{x_ix_i}),\quad D^{\tau}_tD^{\zeta}_z(x_ix_j\tilde a_{ij}u_{x_ix_j}),\quad D^{\tau}_tD^{\zeta}_z(b_iu_{x_i}),\\
&D^{\tau}_tD^{\zeta}_z(x_ic_{il}u_{x_iy_l}),\quad D^{\tau}_tD^{\zeta}_z(d_{lp}u_{y_ly_p}),\quad D^{\tau}_tD^{\zeta}_z(e_lu_{y_l})
\end{aligned}
\end{equation}
have $C^{\alpha}_{WF}([0,T]\times\bar U)$-H\"older norms bounded by the right-hand side of inequality \eqref{eq:Estimate_Lu}. We will prove this fact for the function $D^{\tau}_tD^{\zeta}_z(x_i a_{ii}u_{x_ix_i})$, but the analysis is the same for all the remaining functions in \eqref{eq:All_functions}, and so, we omit the details for brevity. Direct calculations give us
\begin{equation}
\label{eq:Expansion_derivative}
\begin{aligned}
D^{\tau}_tD^{\zeta}_z(x_ia_{ii}u_{x_ix_i})
&= x_ia_{ii}(z)\left(D^{\tau}_tD^{\zeta}_z u\right)_{x_ix_i} + \mathbf{1}_{\{\zeta_i\geq 1\}}a_{ii}(z)\left(D^{\tau}_tD^{\zeta-e_i}_z u\right)_{x_ix_i}\\ 
&\quad+ \mathbf{1}_{\{\zeta_i\geq 1\}} \sum_{\stackrel{\zeta'+\zeta''=\zeta-e_i}{|\zeta''|\leq |\zeta|-2}} D^{\zeta'}_z a_{ii}(z) \left(D^{\tau}_tD^{\zeta''}_z u\right)_{x_ix_i}\\
&\quad+x_i \sum_{\stackrel{\zeta'+\zeta''=\zeta}{|\zeta'|\geq 1}} \left(D^{\zeta'}_z a_{ii}\right)(z) \left(D^{\tau}_tD^{\zeta''}_z u\right)_{x_ix_i},
\end{aligned}
\end{equation}
where $\zeta'$ and $\zeta''$ are multi-indices in $\NN^{n+m}$. We now estimate each term of the preceding inequality.

Applying \cite[Inequality (5.62)]{Epstein_Mazzeo_annmathstudies}, we have that
\begin{align*}
\|x_ia_{ii}D^{\tau}_tD^{\zeta}_z u_{x_ix_i}\|_{C^{\alpha}_{WF}([0,T]\times\bar U)}
&\leq \Lambda \|x_iD^{\tau}_tD^{\zeta}_z u_{x_ix_i}\|_{C^{\alpha}_{WF}([0,T]\times\bar U)}\\
&\quad+ K \|x_iD^{\tau}_tD^{\zeta}_z u_{x_ix_i}\|_{C([0,T]\times\bar U)},
\end{align*}
where we recall the definitions of the positive constants $\Lambda$ and $K$ in \eqref{eq:Constant_estimate_Lu_aux} and \eqref{eq:Holder_cont}, respectively. Because $2\tau+|\zeta| \leq k$, and we assume that $u \in C^{k,2+\alpha}_{WF}([0,T]\times\bar U)$, we have that
$$
\|x_iD^{\tau}_tD^{\zeta}_z u_{x_ix_i}\|_{C^{\alpha}_{WF}([0,T]\times\bar U)} \leq \|u\|_{C^{k,2+\alpha}_{WF}([0,T]\times\bar U)},
$$
and applying the interpolation inequality \eqref{eq:InterpolationIneqS4_higher_order}, we also have that there are positive constants, $C=C(\alpha,k,m,n,T)$ and $m_k=m_k(\alpha,k,m,n)$, such that
$$
\|x_iD^{\tau}_tD^{\zeta}_z u_{x_ix_i}\|_{C([0,T]\times\bar U)} \leq \eps\|u\|_{C^{k,2+\alpha}_{WF}([0,T]\times\bar U)}
+ C\eps^{-m_k} \|u\|_{C([0,T]\times\bar U)} .
$$
Combining the preceding three inequalities, we obtain that
\begin{equation}
\label{eq:Expansion_derivative_term1}
\begin{aligned}
\|x_ia_{ii} D^{\tau}_tD^{\zeta}_zu_{x_ix_i}\|_{C^{\alpha}_{WF}([0,T]\times\bar U)}
&\leq  (\Lambda+K\eps)\|u\|_{C^{k,2+\alpha}_{WF}([0,T]\times\bar U)}  + C\eps^{-m_k}  \|u\|_{C([0,T]\times\bar U)}, 
\end{aligned}
\end{equation}
where now $C=C(\alpha,k,K,m,n,T)$ is a positive constant.

Applying again \cite[Inequality (5.62)]{Epstein_Mazzeo_annmathstudies}, we have that
\begin{align*}
\|a_{ii}D^{\tau}_tD^{\zeta-e_i}_z u_{x_ix_i}\|_{C^{\alpha}_{WF}([0,T]\times\bar U)}
&\leq \Lambda \|D^{\tau}_tD^{\zeta-e_i}_z u_{x_ix_i}\|_{C^{\alpha}_{WF}([0,T]\times\bar U)}\\
&\quad+ K \|D^{\tau}_tD^{\zeta-e_i}_z u_{x_ix_i}\|_{C([0,T]\times\bar U)}.
\end{align*}
Writing the function $D^{\tau}_tD^{\zeta-e_i}_z u_{x_ix_i} = D^{\tau}_tD^{\zeta}_z u_{x_i}$, and using the fact that $2\tau+|\zeta| \leq k$ and $u \in C^{k,2+\alpha}_{WF}([0,T]\times\bar U)$, the interpolation inequality \eqref{eq:InterpolationIneqS2_higher_order}
gives us
\begin{equation*}
\begin{aligned}
\| D^{\tau}_tD^{\zeta}_z u_{x_i}\|_{C([0,T]\times\bar U)}
&\leq  \eps\|u\|_{C^{k,2+\alpha}_{WF}([0,T]\times\bar U)}  + C\eps^{-m_k}  \|u\|_{C([0,T]\times\bar U)}, 
\end{aligned}
\end{equation*}
and so, we obtain
\begin{equation}
\label{eq:Expansion_derivative_term2}
\begin{aligned}
\| a_{ii}D^{\tau}_tD^{\zeta-e_i}_z u_{x_ix_i}\|_{C^{\alpha}_{WF}([0,T]\times\bar U)}
&\leq  (\Lambda+K\eps)\|u\|_{C^{k,2+\alpha}_{WF}([0,T]\times\bar U)}  + C\eps^{-m_k}  \|u\|_{C([0,T]\times\bar U)}, 
\end{aligned}
\end{equation}
We now consider the case of multi-indices $\zeta'$ and $\zeta''$ in $\NN^{n+m}$, such that $\zeta'+\zeta''=\zeta-e_i$ and $|\zeta''|\leq |\zeta|-2$. Because $2\tau+|\zeta| \leq k$, then $2\tau+|\zeta''|+2\leq k$, and using the fact that $u \in C^{k,2+\alpha}_{WF}([0,T]\times\bar U)$, we may apply the the interpolation inequality \eqref{eq:InterpolationIneqS1_higher_order} to the function $D^{\tau}_tD^{\zeta''}_z u_{x_ix_i}$, together with \cite[Inequality (5.62)]{Epstein_Mazzeo_annmathstudies}, to obtain 
\begin{equation}
\label{eq:Expansion_derivative_term3}
\begin{aligned}
\| D^{\zeta'}_za_{ii}D^{\tau}_tD^{\zeta''}_z u_{x_ix_i}\|_{C^{\alpha}_{WF}([0,T]\times\bar U)}
&\leq  (\Lambda+K\eps)\|u\|_{C^{k,2+\alpha}_{WF}([0,T]\times\bar U)}  + C\eps^{-m_k}  \|u\|_{C([0,T]\times\bar U)}, 
\end{aligned}
\end{equation}
It remains to consider the case of indices $\zeta'$ and $\zeta''$ in $\NN^{n+m}$, such that $\zeta'+\zeta''=\zeta$ and $|\zeta'|\geq 1$. Then we have that $2\tau+|\zeta''|+2\leq k+1$, and we may apply the interpolation inequalities \eqref{eq:InterpolationIneqS3_higher_order} to the function $D^{\tau}_tD^{\zeta''}_z u_{x_ix_i}$, together with \cite[Inequality (5.62)]{Epstein_Mazzeo_annmathstudies}, to obtain
\begin{equation}
\label{eq:Expansion_derivative_term4}
\begin{aligned}
\| x_iD^{\zeta'}_za_{ii}D^{\tau}_tD^{\zeta''}_z u_{x_ix_i}\|_{C^{\alpha}_{WF}([0,T]\times\bar U)}
&\leq  (\Lambda+K\eps)\|u\|_{C^{k,2+\alpha}_{WF}([0,T]\times\bar U)} \\
&\quad + C\eps^{-m_k}  \|u\|_{C([0,T]\times\bar U)}, 
\end{aligned}
\end{equation}
Combining identity \eqref{eq:Expansion_derivative} with inequalities \eqref{eq:Expansion_derivative_term1}-\eqref{eq:Expansion_derivative_term4}, we obtain that the $C^{\alpha}_{WF}([0,T]\times\bar U)$-H\"older norm of the function $D^{\tau}_tD^{\zeta}_z(x_ia_{ii}u_{x_ix_i})$ is controlled by the right-hand side of inequality \eqref{eq:Estimate_Lu}. This completes the proof.
\end{proof}

We have another technical estimate used in the proof of Theorem \ref{thm:ExistenceUniqueness}.
\begin{lem}[Estimate of $\varphi Lu$]
\label{lem:Estimate_varphi_Lu}
Let $\alpha\in (0,1)$, $k\in\NN$ and $T>0$. Let $I\subseteq\{1,\ldots,n\}$ and let $\varphi\in C^{k,\alpha}_{WF}(\bar S_{n,m})$ be a function with support in $\bar M'_I$. Suppose that the coefficients of the operator $L$ satisfy property \eqref{eq:Holder_cont}. Then there are positive constants, $C=C(\alpha,\|\varphi\|_{C^{k,\alpha}_{WF}(\bar S_{n,m})},k,K,m,n,T)$ and $m_k=m_k(\alpha,k,m,n)$, such that for all functions $u\in C^{k,2+\alpha}_{WF}([0,T]\times\bar S_{n,m})$ with support in $[0,T]\times\bar M''_I$, we have that
\begin{equation}
\label{eq:Estimate_varphi_Lu}
\begin{aligned}
\|\varphi Lu\|_{C^{k,\alpha}_{WF}([0,T]\times\bar S_{n,m})} 
&\leq \left(\Lambda\|\varphi\|_{C(\bar S_{n,m})} +C\eps\right)\|u\|_{C^{k,2+\alpha}_{WF}([0,T]\times\bar S_{n,m})}\\
&\quad + C\eps^{-m_k} \|u\|_{C([0,T]\times\bar S_{n,m})},
\end{aligned}
\end{equation}
where the positive constant $\Lambda$ is given by \eqref{eq:Constant_estimate_Lu} with the set $\bar U$ replaced by $\supp\varphi$.
\end{lem}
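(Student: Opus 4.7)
The proof plan for Lemma \ref{lem:Estimate_varphi_Lu} is to essentially mimic the proof of Lemma \ref{lem:Estimate_Lu}, but carefully isolate the factor $\|\varphi\|_{C(\bar S_{n,m})}$ in the leading coefficient of $\|u\|_{C^{k,2+\alpha}_{WF}}$ on the right-hand side. First, using Remark \ref{rmk:Reduction_to_one_set_of_indices}, I would relabel variables so that $I=\{1,\ldots,n\}$, in which case $\supp\varphi\subseteq\bar M'_{\{1,\ldots,n\}}$, $\supp u\subseteq[0,T]\times\bar M''_{\{1,\ldots,n\}}$, and $\Lambda$ reduces to the sum of sup-norms of $a_{ii}$, $\tilde a_{ij}$, $b_i$, $c_{il}$, $d_{lp}$, $e_l$ over $\supp\varphi$.

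Next, I would expand $\varphi Lu$ as a sum of six types of terms matching the six groups appearing in \eqref{eq:All_functions} (multiplied by $\varphi$), and apply Leibniz's rule to compute $D^\tau_t D^\zeta_z(\varphi\cdot\text{coeff}\cdot\text{derivative of }u)$ for each. The key step is to separate, for the leading term in the Leibniz expansion, the top-order part $\varphi\cdot(\text{coeff})\cdot D^\tau_t D^\zeta_z(\text{derivative of }u)$ from the remaining terms, which carry at least one derivative landing on $\varphi$ or on a coefficient. For the top-order part, I would first apply the product-type H\"older inequality \cite[Inequality (5.62)]{Epstein_Mazzeo_annmathstudies} with the function $\varphi$ as a multiplier, obtaining
\[
\|\varphi\cdot(\text{coeff})\cdot D^\tau_t D^\zeta_z(\ldots u)\|_{C^\alpha_{WF}}\le \|\varphi\|_{C(\bar S_{n,m})}\|(\text{coeff})\cdot D^\tau_t D^\zeta_z(\ldots u)\|_{C^\alpha_{WF}} + K\|\varphi\|_{C^\alpha_{WF}}\|(\text{coeff})\cdot D^\tau_t D^\zeta_z(\ldots u)\|_{C(\bar S_{n,m})},
\]
then apply \cite[Inequality (5.62)]{Epstein_Mazzeo_annmathstudies} once more to strip the coefficient. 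This produces $\Lambda\|\varphi\|_{C(\bar S_{n,m})}\|u\|_{C^{k,2+\alpha}_{WF}}$, as desired, plus a residual $C_\varphi\|D^\tau_t D^\zeta_z(\ldots u)\|_{C(\bar S_{n,m})}$, which we can absorb into $\eps\|u\|_{C^{k,2+\alpha}_{WF}}+C\eps^{-m_k}\|u\|_C$ by invoking Corollary \ref{cor:Higher_order_interpolation_inequalities} (specifically \eqref{eq:InterpolationIneqS4_higher_order}--\eqref{eq:InterpolationIneqS4_fourth_higher_order}, where the support condition $\supp u\subseteq[0,T]\times\bar M''_I$ is crucial).

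For the remaining Leibniz terms, where at least one derivative falls on $\varphi$ or on a coefficient, the total order of derivatives of $u$ is strictly less than $2\tau+|\zeta|+2$. Therefore I can treat $D^{\zeta'}_z\varphi\cdot D^{\zeta''}_z(\text{coeff})$ as a fixed factor with $C^\alpha_{WF}$-norm bounded by $\|\varphi\|_{C^{k,\alpha}_{WF}}$ times $K$, and use the interpolation inequalities \eqref{eq:InterpolationIneqS1_higher_order}--\eqref{eq:InterpolationIneqS3_secund_higher_order} applied at the appropriate intermediate order to obtain a bound of the form $C\eps\|u\|_{C^{k,2+\alpha}_{WF}}+C\eps^{-m_k}\|u\|_C$, where $C$ depends on $\|\varphi\|_{C^{k,\alpha}_{WF}}$, $K$, $k$, $m$, $n$, $T$, and $\alpha$, but crucially not on the top-order piece of $\varphi$. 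Summing over all six groups of terms and over all $(\tau,\zeta)$ with $2\tau+|\zeta|\le k$ yields \eqref{eq:Estimate_varphi_Lu}.

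The main obstacle will be bookkeeping, specifically ensuring that the factor $\|\varphi\|_{C(\bar S_{n,m})}$ (and not any H\"older seminorm of $\varphi$) multiplies $\Lambda$ in front of $\|u\|_{C^{k,2+\alpha}_{WF}}$; every term which would contribute the wrong factor (e.g.\ those with $\|\varphi\|_{C^{k,\alpha}_{WF}}$ multiplying a top-order derivative of $u$) must be dispatched via the interpolation inequalities at the cost of the small $\eps$. This is possible precisely because any such term arises from at least one derivative landing on $\varphi$, which strictly lowers the order of $u$-derivatives below the top order and therefore falls within the scope of Corollary \ref{cor:Higher_order_interpolation_inequalities}. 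The support assumption $\supp u\subseteq[0,T]\times\bar M''_I$ ensures that the weighted interpolation inequalities \eqref{eq:InterpolationIneqS4_higher_order}--\eqref{eq:InterpolationIneqS4_third_higher_order} are applicable.
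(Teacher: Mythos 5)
Your proposal is correct and follows essentially the same route as the paper. The paper's own proof is a one-line remark: apply Lemma \ref{lem:Estimate_Lu} to the operator whose coefficients are $\varphi a_{ii}$, $\varphi\tilde a_{ij}$, etc.\ (which by the product rule satisfy the analogue of \eqref{eq:Holder_cont} with a constant depending on $\|\varphi\|_{C^{k,\alpha}_{WF}}$), and then observe that the resulting $\Lambda'=\sum\|\varphi a_{ii}\|_{C(\supp\varphi)}+\dots$ is bounded by $\|\varphi\|_{C(\bar S_{n,m})}\Lambda$. Your Leibniz-rule expansion with $\varphi$ and the coefficient treated as two separate factors, followed by two applications of the product inequality \cite[Inequality (5.62)]{Epstein_Mazzeo_annmathstudies} to isolate $\|\varphi\|_{C(\bar S_{n,m})}$ on the top-order term and interpolation on the lower-order ones, is precisely the detail hidden inside that one-sentence reduction.
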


\begin{proof}
Estimate \eqref{eq:Estimate_varphi_Lu} if a straightforward consequence of Lemma \ref{lem:Estimate_Lu}. We only need to change the coefficients of the operator $L$ by multiplying them by the function $\varphi$. We omit the detailed proof.
\end{proof}

We now give the proof of the interpolation inequalities in the anisotropic H\"older spaces.
\begin{proof} [Proof of Proposition \ref{prop:InterpolationIneqS}]
We consider $\eta\in(0,1)$ to be a suitably chosen constant during the proofs of each of inequalities \eqref{eq:InterpolationIneqS1}-\eqref{eq:InterpolationIneqS3_secund}. We divide the proof into several steps.

\begin{step}[Proof of inequality \eqref{eq:InterpolationIneqS1}]
\label{step:InterpolationIneqS1}
It is sufficient to show that inequality \eqref{eq:InterpolationIneqS1} holds for the seminorm $[u]_{C^{\alpha}_{WF}([0,T]\times\bar S_{n,m})}$, and for this purpose we only need to consider differences, $u(P_1)-u(P_2)$, where all except one of the coordinates of the points $P_1, P_2 \in [0,T]\times\bar S_{n,m}$ are identical. We outline the proof when the $x_i$-coordinates of $P_1$ and $P_2$ differ, but the case of the $t$-coordinate and of the $y_l$-coordinates can be treated in a similar manner. Notice that from inequalities \eqref{eq:Equivalent_intrinsic_metric}, we can find a positive constant, $C$, such that
\begin{equation}
\label{eq:Quotient_Euclidean_rho_distance}
\frac{|x^1_i-x^2_i|}{\rho(P_1,P_2)} \leq C,
\end{equation}
We consider two situations: $|x^1_i-x^2_i|\leq \eta$ and $|x^1_i-x^2_i|> \eta$.
\begin{case}[Points with $x_i$-coordinates close together]
\label{case:InterpolationIneqS1_case1}
Assuming that $|x^1_i-x^2_i|\leq \eta$, we have
\begin{equation*}
\begin{aligned}
|u(P_1)-u(P_2)| &\leq |x^1_i-x^2_i| \|u_{x_i}\|_{C([0,T]\times\bar S_{n,m})} \\
                &\leq \eta \frac{|x^1_i-x^2_i|}{\eta} \|u\|_{C^{2+\alpha}_{WF}([0,T]\times\bar S_{n,m})}\\
                &\leq \eta \left(\frac{|x^1_i-x^2_i|}{\eta} \right)^{\alpha} \|u\|_{C^{2+\alpha}_{WF}([0,T]\times\bar S_{n,m})}\\
                &\leq \eta^{1-\alpha} \left(\frac{|x^1_i-x^2_i|}{\rho(P_1,P_2)}\right)^{\alpha} \rho^{\alpha}(P_1,P_2)\|u\|_{C^{2+\alpha}_{WF}([0,T]\times\bar S_{n,m})},
\end{aligned}
\end{equation*}
and so, using inequality \eqref{eq:Quotient_Euclidean_rho_distance}, there is a positive constant, $C=C(\alpha)$, such that
\begin{equation}
\label{eq:InterpolationIneqS1_2}
\begin{aligned}
\frac{|u(P_1)-u(P_2)|}{\rho^{\alpha}(P_1,P_2)} \leq C\eta^{1-\alpha}  \|u\|_{C^{2+\alpha}_{WF}([0,T]\times\bar S_{n,m})},
\end{aligned}
\end{equation}
which concludes this case.
\end{case}

\begin{case}[Points with $x_i$-coordinates farther apart]
\label{case:InterpolationIneqS1_case2}
Assuming that $|x^1_i-x^2_i|> \eta$, we have
$$
1<\left(\frac{|x^1_i-x^2_i|}{\eta} \right)^{\alpha} = \eta^{-\alpha}\left(\frac{|x^1_i-x^2_i|}{\rho(P_1,P_2)}\right)^{\alpha} \rho^{\alpha}(P_1,P_2),
$$
from where it follows by inequality \eqref{eq:Quotient_Euclidean_rho_distance} that there is a positive constant, $C=C(\alpha)$,  such that $1 \leq C\eta^{-\alpha}  \rho^{\alpha}(P_1,P_2)$. We then obtain that
$$
|u(P_1)-u(P_2)| \leq 2 \|u\|_{C([0,T]\times\bar S_{n,m})} \leq  C\eta^{-\alpha} \rho^{\alpha}(P_1,P_2) \|u\|_{C([0,T]\times\bar S_{n,m})},
$$
which is equivalent to
\begin{equation}
\label{eq:InterpolationIneqS1_3}
\begin{aligned}
\frac{|u(P_1)-u(P_2)|}{\rho^{\alpha}(P_1,P_2)} &\leq C\eta^{-\alpha} \|u\|_{C([0,T]\times\bar S_{n,m})},
\end{aligned}
\end{equation}
which concludes this case.
\end{case}
By combining inequalities \eqref{eq:InterpolationIneqS1_2} and \eqref{eq:InterpolationIneqS1_3}, we obtain
$$
[u]_{C^{\alpha}_{WF}([0,T]\times\bar S_{n,m})} \leq C \eta^{1-\alpha} \|u\|_{C^{2+\alpha}_{WF}([0,T]\times\bar S_{n,m})} + C \eta^{-\alpha} \|u\|_{C([0,T]\times\bar S_{n,m})}.
$$
Since $\eps\in(0,1)$, we may choose $\eta\in(0,1)$ such that $\eps=C\eta^{1-\alpha}$. The preceding inequality then gives \eqref{eq:InterpolationIneqS1}.
\end{step}

Inequalities \eqref{eq:InterpolationIneqS2}, \eqref{eq:InterpolationIneqS2_prim} and \eqref{eq:InterpolationIneqS2_secund} are proved using a similar method, and so, we only outline the proof of inequality \eqref{eq:InterpolationIneqS2}.
\begin{step}[Proof of inequality \eqref{eq:InterpolationIneqS2}]
\label{step:InterpolationIneqS2}
Let $P\in [0,T]\times\bar S_{n,m}$. Then, for any $\eta>0$, we have
\begin{align*}
|u_{x_i}(P)| &\leq \left|u_{x_i}(P)-\eta^{-1} \left(u(P+\eta e_i) - u(P)\right)\right| + 2 \eta^{-1} \|u\|_{C([0,T]\times\bar S_{n,m})}\\
             &=    |u_{x_i}(P)- u_{x_i}(P+\eta\theta e_i)| + 2 \eta^{-1} \|u\|_{C([0,T]\times\bar S_{n,m})}\\
             &=    \frac{|u_{x_i}(P)- u_{x_i}(P+\eta\theta e_i)|}
             {\rho^{\alpha}(P,P+\eta\theta e_i)}\rho^{\alpha}(P,P+\eta\theta e_i)
                    + 2 \eta^{-1} \|u\|_{C([0,T]\times\bar S_{n,m})},
\end{align*}
for some constant $\theta \in [0,1]$. From inequality \eqref{eq:Equivalent_intrinsic_metric} and the fact that we choose $\eta\in (0,1)$, we obtain that there is a positive constant, $C$, such that
\begin{equation}
\label{eq:ADDS}
\rho(P,P+\eta\theta e_i) \leq  C\eta^{1/2},\quad \forall\, P\in [0,T]\times\bar S_{n,m},
\end{equation}
which gives us
\begin{equation*}
|u_{x_i}(P)| \leq \eta^{\alpha/2}[u_{x_i}]_{C^{\alpha}_{WF}([0,T]\times\bar S_{n,m})}
                    + 2 \eta^{-1} \|u\|_{C([0,T]\times\bar S_{n,m})}, \quad \forall\, P\in [0,T]\times\bar S_{n,m}.
\end{equation*}
Since $\eps\in(0,1)$, we may choose $\eta\in(0,1)$ such that $\eps=\eta^{\alpha/2}$, and inequality \eqref{eq:InterpolationIneqS2} follows immediately from the preceding one.
\end{step}

For inequalities \eqref{eq:InterpolationIneqS4}-\eqref{eq:InterpolationIneqS3_secund}, we assume that the function $u$ is supported in $[0,T]\times\bar M''_I$, for a set of indices $I\subseteq\{1,\ldots,n\}$. From Remark \ref{rmk:Reduction_to_one_set_of_indices}, without loss of generality we may assume that $I=\{1,2,\ldots,n\}$, because otherwise the $x_j$-variables, for $j\in I^c $, can be treated as the $y_l$-variables, for all $l=1,\ldots,m$. That is, by relabeling the variables, we may replace the space $S_{n,m}$ by $S_{n',m'}$, where $n'=|I|$ and $m'=m+n-|I|$, so that the function $u$ may be assumed to have support in $[0,T]\times \bar M''_{\{1,\ldots,n'\}}$. 

Note that it is sufficient to prove inequalities \eqref{eq:InterpolationIneqS4}, \eqref{eq:InterpolationIneqS4_prim} and \eqref{eq:InterpolationIneqS4_fourth}, as inequalities \eqref{eq:InterpolationIneqS4_secund} and \eqref{eq:InterpolationIneqS4_third} follow from \eqref{eq:InterpolationIneqS4_prim} and \eqref{eq:InterpolationIneqS4_fourth}, respectively. The proof of inequality \eqref{eq:InterpolationIneqS4_fourth} is very similar to the proofs of inequalities \eqref{eq:InterpolationIneqS4} and \eqref{eq:InterpolationIneqS4_prim}, but simpler, and so, we omit its detailed proof for brevity.

\begin{step}[Proof of inequalities \eqref{eq:InterpolationIneqS4} and \eqref{eq:InterpolationIneqS4_prim}]
For any point, $P=(t,z)\in [0,T]\times\bar S_{n,m}$, and $\eta>0$, we have for all $i\in I$ and $j=1,\ldots,n$,
\begin{align}
|u_{x_ix_j}(P)|
&\leq\left|u_{x_ix_j}(P) - \eta^{-1}\left(u_{x_i}(P+\eta e_j) -u_{x_i}(P) \right)\right|
+ \eta^{-1}\left(|u_{x_i}(P)| + |u_{x_i}(P+ \eta e_j)|\right)\notag\\
&\label{eq:ADD1}
\leq  \left|u_{x_ix_j}(P) - u_{x_ix_j}(P+\theta \eta e_j) \right| + \eta^{-1}\left(|u_{x_i}(P)| + |u_{x_i}(P+ \eta e_j)|\right),
\end{align}
for some $\theta \in [0,1]$. If $j \in I^c$, we have
\begin{equation*}
\begin{aligned}
|\sqrt{x_i}u_{x_ix_j}(P)|
&\leq\frac{|\sqrt{x_i}u_{x_ix_j}(P) - \sqrt{x_i}u_{x_ix_j}(P+\theta \eta e_j)|}{\rho^{\alpha}(P,P+\theta\eta e_j)}
                          \rho^{\alpha}(P,P+\theta\eta e_j)\\
&\quad + 2 \eta^{-1}\|\sqrt{x_i}u_{x_i}\|_{C([0,T]\times\bar S_{n,m})} \\
&\leq  C \eta^{\alpha/2}[\sqrt{x_i}u_{x_ix_j}]_{C^{\alpha}_{WF}([0,T]\times\bar S_{n,m})}
                             + 2 \eta^{-1}\|\sqrt{x_i}u_{x_i}\|_{C([0,T]\times\bar S_{n,m})} \hbox{  (by \eqref{eq:ADDS}).}
\end{aligned}
\end{equation*}
For all $\eps\in(0,1)$, we may choose $\eta\in(0,1)$ such that $\eps=C\eta^{\alpha/2}$ in the preceding inequality. Combining the resulting inequality with \eqref{eq:InterpolationIneqS2}, and using the fact that the function $u$ has support in $
[0,T]\times\bar M''_I$, and that the domain $M''_I$ is bounded in the $x_i$-coordinate, for all $i\in I$, we see that estimate \eqref{eq:InterpolationIneqS4_prim} holds for all $j \in I^c$.

Next, we consider the case when $j \in I$, that is, we want to prove inequality \eqref{eq:InterpolationIneqS4}. For brevity, we denote $P'=P+\theta\eta e_j$. We consider two distinct cases depending on whether $\eta < x'_j/2$ or $\eta \geq x'_j/2$. 

\setcounter{case}{0}
\begin{case}[Points with $x_j$-coordinates small] Assuming that $\eta < x'_j/2$, we obtain by \eqref{eq:ADD1} that
\begin{equation}
\label{eq:InterpolationIneq4_11}
\begin{aligned}
\left|\sqrt{x_ix_j}u_{x_ix_j}(P)\right| 
&\leq  \frac{\left|\sqrt{x_ix_j}u_{x_ix_j}(P) - \sqrt{x_ix'_j}u_{x_ix_j}(P')\right|}{\rho^{\alpha}(P,P')} \rho^{\alpha}(P,P')\\
&\quad + \left|\left(\sqrt{x_ix_j}-\sqrt{x_ix'_j}\right) u_{x_ix_j}(P')\right| + 2 \eta^{-1}\|\sqrt{x_ix_j}u_{x_i}\|_{C([0,T]\times\bar S_{n,m})}.
\end{aligned}
\end{equation}
Using inequality \eqref{eq:ADDS} and the fact that $|x'_j-x_j|\leq\eta$, by definitions of the points $P$ and $P'$, we have that
\begin{align*}
|\sqrt{x_ix_j}u_{x_ix_j}(P)| 
&\leq \eta^{\alpha/2}[\sqrt{x_ix_j}u_{x_ix_j}]_{C^{\alpha}_{WF}([0,T]\times\bar S_{n,m})}\\
&\quad + \sqrt{\frac{\eta}{x'_j}}\left|\sqrt{x_ix'_j} u_{x_ix_j}(P')\right|
       + 2 \eta^{-1}\|\sqrt{x_ix_j}u_{x_i}\|_{C([0,T]\times\bar S_{n,m})},
\end{align*}
which gives, by our assumption that $\eta<x'_j/2$,
\begin{equation}
\label{eq:InterpolationIneqS4_1}
\begin{aligned}
|\sqrt{x_ix_j}u_{x_ix_j}(P)| &\leq  \eta^{\alpha/2}[\sqrt{x_ix_j}u_{x_ix_j}]_{C^{\alpha}_{WF}([0,T]\times\bar S_{n,m})}\\
                             &\quad + \frac{1}{\sqrt{2}}\|\sqrt{x_ix_j} u_{x_ix_j}\|_{C([0,T]\times\bar S_{n,m})}
                             + 2\eta^{-1}\|\sqrt{x_ix_j}u_{x_i}\|_{C[0,T]\times\bar S_{n,m})},
\end{aligned}														
\end{equation}
which concludes this case.
\end{case}

\begin{case}[Points with $x_j$-coordinates large] We now assume that $\eta \geq x'_j/2$. Using the fact that $x'_j = x_j + \theta\eta$, for some $\theta\in [0,1]$, we have that $|x'_j-x_j| \leq x'_j$. From \cite[Lemma 5.2.5]{Epstein_Mazzeo_annmathstudies}, it follows that $\sqrt{x_ix_j} u_{x_ix_j}$ tends to $0$, as $x_j$ approaches $0$, and so, we obtain
\begin{align*}
\left|\left(\sqrt{x_ix_j}-\sqrt{x_ix'_j}\right) u_{x_ix_j}(P')\right| &\leq \left|\sqrt{x_ix'_j}u_{x_ix_j}(P')\right|
= \frac{\left|\sqrt{x_ix'_j}u_{x_ix_j}(P') - 0\right|}{\rho^{\alpha}(P',P^{''})} \rho^{\alpha}(P',P^{''}) \\
&\leq [\sqrt{x_ix_j}u_{x_ix_j}]_{C^{\alpha}_{WF}([0,T]\times\bar S_{n,m})} (2\eta)^{\alpha/2},
\end{align*}
where $P^{''}$ be the projection of $P'$ on the hyperplane $\{x_j=0\}$, which gives us the inequality $\rho(P',P^{''}) \leq \sqrt{x'_j}\leq\sqrt{2\eta}$, used in the second line of the preceding estimate. By inequality \eqref{eq:InterpolationIneq4_11}, we obtain that there is a positive constant, $C=C(\alpha)$, such that
\begin{equation}
\label{eq:InterpolationIneqS4_2}
\begin{aligned}
|\sqrt{x_ix_j}u_{x_ix_j}(P)| &\leq  C \eta^{\alpha/2}[\sqrt{x_ix_j}u_{x_ix_j}]_{C^{\alpha}_{WF}([0,T]\times\bar S_{n,m})}
                                    +2\eta^{-1}\|\sqrt{x_ix_j}u_{x_i}\|_{C([0,T]\times\bar S_{n,m})},
\end{aligned}
\end{equation}
which concludes this case.
\end{case}
Combining inequalities \eqref{eq:InterpolationIneqS4_1} and \eqref{eq:InterpolationIneqS4_2}, we obtain, for all $P\in [0,T]\times\bar S_{n,m}$, that
\begin{align*}
|\sqrt{x_ix_j}u_{x_ix_j}(P)| &\leq \frac{1}{\sqrt{2}}\|\sqrt{x_ix_j} u_{x_ix_j}\|_{C([0,T]\times\bar S_{n,m})}\\
&\quad +C \eta^{\alpha/2} [\sqrt{x_ix_j}u_{x_ix_j}]_{C^{\alpha}_{WF}([0,T]\times\bar S_{n,m})} 
+  2\eta^{-1}\|\sqrt{x_ix_j}u_{x_i}\|_{C([0,T]\times\bar S_{n,m})}.
\end{align*}
Rearranging terms yields
\begin{equation*}
\|\sqrt{x_ix_j}u_{x_ix_j}\|_{C([0,T]\times\bar S_{n,m})} \leq C \eta^{\alpha/2} [\sqrt{x_ix_j}u_{x_ix_j}]_{C^{\alpha}_{WF}([0,T]\times\bar S_{n,m})} 
+  C\eta^{-1}\|\sqrt{x_ix_j}u_{x_i}\|_{C([0,T]\times\bar S_{n,m})}.
\end{equation*}
Since $\eps\in (0,1)$, we may choose $\eta \in (0,1)$ in the preceding inequality such that $\eps= C\eta^{\alpha/2}$. To obtain inequality \eqref{eq:InterpolationIneqS4}, we then use \eqref{eq:InterpolationIneqS2}, together with the fact that the function $u$ has support in $[0,T]\times\bar M''_I$, and so, the the domain $M''_I$ is bounded in the $x_i$ and $x_j$-coordinates, for all $i,j\in I$. This concludes the proof of the interpolation inequality \eqref{eq:InterpolationIneqS4}, for all $j \in I$.
\end{step}

It remains to give the proofs of the interpolation inequalities \eqref{eq:InterpolationIneqS3}-\eqref{eq:InterpolationIneqS3_secund}. The proofs of inequalities \eqref{eq:InterpolationIneqS3_prim} and \eqref{eq:InterpolationIneqS3_secund} are similar to that of inequality \eqref{eq:InterpolationIneqS3}, but simpler, and so we only give the details of the proof of inequality \eqref{eq:InterpolationIneqS3}.

\begin{step}[Proof of inequality \eqref{eq:InterpolationIneqS3}]
\label{step:Proof_InterpolationIneqS3}
From inequality \eqref{eq:InterpolationIneqS2}, we see that it is sufficient to prove that estimate \eqref{eq:InterpolationIneqS3} holds for the H\"older seminorm $[x_i u_{x_i}]_{C^{\alpha}_{WF}([0,T]\times\bar S_{n,m})}$.  As in the proof of inequality \eqref{eq:InterpolationIneqS1}, it suffices to consider the differences $x_i^1u_{x_i}(P_1)-x_i^2u_{x_i}(P_2)$, where all except one of the coordinates of the points $P_1, P_2 \in [0,T]\times\bar S_{n,m}$ are identical. First, we consider the case when only the $x_i$-coordinates of the points $P_1$ and $P_2$ differ. 

\setcounter{case}{0}
\begin{case}[Points with $x_i$-coordinates close together] Assuming that $|x^1_i-x^2_i|\leq \eta$, and using the mean value theorem,  there is a point $P^*$ on the line segment connecting $P_1$ and $P_2$ such that
$$
x_i^1u_{x_i}(P_1)-x_i^2u_{x_i}(P_2)  = \left(x_i^* u_{x_i x_i}(P^*) + u_{x_i}(P^*)\right) (x_i^1-x_i^2).
$$
The argument used to prove Case \ref{case:InterpolationIneqS1_case1} of Step \ref{step:InterpolationIneqS1} applies immediately to this setting, with the role of function $u_{x_i}$ replaced by $x_iu_{x_ix_i}+u_{x_i}$, and we obtain that there is a positive constant, $C=C(\alpha)$, such that
\begin{equation}
\label{eq:InterpolationIneqS2_2}
\begin{aligned}
\frac{|x_i^1u_{x_i}(P_1)-x_i^2u_{x_i}(P_2)|}{\rho^{\alpha}(P_1,P_2)} &\leq C \eta^{1-\alpha} \|u\|_{C^{2+\alpha}_{WF}([0,T]\times\bar S_{n,m})},
\end{aligned}
\end{equation}
which concludes this case.
\end{case}

\begin{case}[Points with $x_i$-coordinates farther apart] Assuming that $|x^1_i-x^2_i|> \eta$, the argument of Case \ref{case:InterpolationIneqS1_case2} in Step \ref{step:InterpolationIneqS1} applies with $u$ replaced by $x_iu_{x_ix_i}$, and we obtain
\begin{equation*}
\begin{aligned}
\frac{|x_i^1u_{x_i}(P_1)-x_i^2u_{x_i}(P_2)|}{\rho^{\alpha}(P_1,P_2)}
&\leq C\eta^{-\alpha} \|x_i u_{x_i}\|_{C([0,T]\times\bar S_{n,m})}.
\end{aligned}
\end{equation*}
Since $\eps\in(0,1)$, we may choose $\eta$ such that $\eps=\eta^{\alpha+1}$ in inequality \eqref{eq:InterpolationIneqS2}, and we obtain
\begin{equation}
\label{eq:InterpolationIneqS2_xdfarapart}
\frac{|x_i^1u_{x_i}(P_1)-x_i^2u_{x_i}(P_2)|}{\rho^{\alpha}(P_1,P_2)}
\leq C \eta \|u\|_{C^{2+\alpha}_{WF}([0,T]\times\bar S_{n,m})} + C \eta^{-m_0(1+\alpha)-\alpha} \|u\|_{C([0,T]\times\bar S_{n,m})},
\end{equation}
which concludes this case.
\end{case}
Combining inequalities \eqref{eq:InterpolationIneqS2_2} and \eqref{eq:InterpolationIneqS2_xdfarapart} gives us that
\begin{equation}
\label{eq:InterpolationIneqS2_3}
\begin{aligned}
\frac{|x_i^1u_{x_i}(P_1)-x_i^2u_{x_i}(P_2)|}{\rho^{\alpha}(P_1,P_2)}
\leq C\eta^{1-\alpha} \|u\|_{C^{2+\alpha}_{WF}([0,T]\times\bar S_{n,m})} + C\eta^{-m_0(1+\alpha)-\alpha} \|u\|_{C([0,T]\times\bar S_{n,m})}.
\end{aligned}
\end{equation}

We now consider the case when only the $x_j$-coordinates, with $j\neq i$ and $j\in I$, differ. Let $x^k_j$ be the $x_j$-coordinates of the points $P_k$, for $k=1,2$, and assume that $x^1_j<x^2_j$. We have that
\begin{align*}
\sqrt{x_i}\left(u_{x_i}(P_2)-u_{x_i}(P_1)\right) &= \sqrt{x_i}\int_0^{x^2_j-x^1_j} u_{x_ix_j}(P_1+te_j)\, dt\\
&= \sqrt{x_i}\int_0^{x^2_j-x^1_j} \sqrt{x^1_j+t}\, u_{x_ix_j}(P_1+te_j)\frac{1}{\sqrt{x^1_j+t}}\, dt,
\end{align*}
which gives us that
\begin{align*}
\sqrt{x_i}\left|u_{x_j}(P_2)-u_{x_j}(P_1)\right| &\leq 2\|\sqrt{x_ix_j}u_{x_ix_j}\|_{C([0,T]\times\bar S_{n,m})} \left(\sqrt{x^2_j}-\sqrt{x^1_j}\right).
\end{align*}
Because the function $u$ has support in $[0,T]\times\bar M''_I$ and $j\in I$, we obtain from property \eqref{eq:Equivalent_intrinsic_metric} of the distance function $\rho$ that there is a positive constant, $C$, such that
\begin{align*}
\frac{\sqrt{x_i}\left|u_{x_j}(P_2)-u_{x_j}(P_1)\right|}{\rho^{\alpha}(P_1,P_2)} &\leq C\|\sqrt{x_ix_j}u_{x_ix_j}\|_{C([0,T]\times\bar S_{n,m})} \left(\sqrt{x^2_j}-\sqrt{x^1_j}\right)^{1-\alpha}.
\end{align*}
Using the fact that $u$ has support in $[0,T]\times\bar M''_I$, and that the set $M''_I$ is bounded in the $x_i$- and $x_j$-directions, from identity \eqref{eq:M_I_secund}, it follows that there is a positive constant, $C=C(\alpha)$, such that
\begin{align*}
\frac{\left|x_i(u_{x_j}(P_2)-u_{x_j}(P_1))\right|}{\rho^{\alpha}(P_1,P_2)} &\leq C\|\sqrt{x_ix_j}u_{x_ix_j}\|_{C([0,T]\times\bar S_{n,m})}.
\end{align*} 
From the interpolation inequality \eqref{eq:InterpolationIneqS4}, it follows that there are positive constants, $C=C(\alpha,m,n,T)$ and $m_0=m_0(\alpha,m,n)$, such that for all $\eps\in (0,1)$ we have that
\begin{align}
\label{eq:InterpolationIneqS2_10}
\frac{\left|x_i(u_{x_j}(P_2)-u_{x_j}(P_1))\right|}{\rho^{\alpha}(P_1,P_2)} &\leq \eps\|u\|_{C^{2+\alpha}_{WF}([0,T]\times\bar S_{n,m})} +C\eps^{-m_0}\|u\|_{C([0,T]\times\bar S_{n,m})}.
\end{align} 
A similar argument applied when only the $x_j$-coordinates, with $j\neq i$ and $j\in I^c$, or only the $y_l$-coordinates of the points $P_1$ and $P_2$, with $l=1,\ldots,m$, differ also yields the analogous inequality of \eqref{eq:InterpolationIneqS2_10}.

It remains to consider the case when only the $t$-coordinates of the points $P_1$ and $P_2$ differ. We denote $P_k=(t_k,z)$, for $k=1,2$, and we let $\gamma:=\sqrt{|t_1-t_2|}$.

\begin{case}[Points with $t$-coordinates close together] 
\label{case:Coordinate_t_1}
Assuming that $|t_1-t_2|<\eta$, we have
\begin{equation*}
\begin{aligned}
|u_{x_i}(P_1) - u_{x_i}(P_2)|
&\leq
\left|u_{x_i}(t_1,z) - \frac{1}{\gamma} \left( u(t_1,z+\gamma e_i) - u(t_1,z)\right)\right| \\
&\quad +\left|u_{x_i}(t_2,z) - \frac{1}{\gamma} \left( u(t_2,z+\gamma e_i) - u(t_2,z)\right)\right| \\
&\quad+ \frac{1}{\gamma} |u(t_1,z+\gamma e_i) - u(t_2,z+\gamma e_i)| +
   \frac{1}{\gamma} |u(t_1,z) - u(t_2,z)|.
\end{aligned}
\end{equation*}
and using the mean value theorem, there are points $t^*_k\in [0,T]$ and $P^*_k\in\bar S_{n,m}$, for $k=1,2$, such that
\begin{equation*}
\begin{aligned}
|u_{x_i}(P_1) - u_{x_i}(P_2)|
&= |u_{x_i}(t_1,z) - u_{x_i}(t_1,z+\theta_1\gamma e_i)| +
   |u_{x_i}(t_2,z) - u_{x_i}(t_2,z+\theta_2\gamma e_i)| \\
&\quad+  \frac{|t_1-t_2|}{\gamma} |u_t(t^*_1,z+\gamma e_i)| +
    \frac{|t_1-t_2|}{\gamma} |u_t(t^*_2,z)|\\
&\leq |u_{x_ix_i}(t_1,P_1^*)| \gamma + |u_{x_ix_i}(t_2,P_2^*) | \gamma\\
      &\quad + \frac{|t_1-t_2|}{\gamma} |u_t(t^*_1,z+\gamma e_i)| +
       \frac{|t_1-t_2|}{\gamma} |u_t(t^*_2,z)|.
\end{aligned}
\end{equation*}
Notice that $\rho(P_1,P_2) = \sqrt{|t_1-t_2|}=\gamma$ and so, by multiplying the preceding inequality by $x_i$, and using the fact that $u$ has support in $[0,T]\times\bar M''_I$, and that $M''_I$ is bounded in the $x_i$-coordinate, for all $i\in I$, we obtain
\begin{align*}
\frac{|x_iu_{x_i}(P_1) - x_iu_{x_i}(P_2)|}{\rho^{\alpha}(P_1,P_2)}
&\leq 2\|x_i u_{x_ix_i}\|_{C([0,T]\times\bar S_{n,m})} |t_1-t_2|^{\frac{1-\alpha}{2}}\\
&\quad+      2|t_1-t_2|^{1-\frac{1+\alpha}{2}}  \|x_iu_t\|_{C([0,T]\times\bar S_{n,m})},
\end{align*}
and thus, there is a positive constant, $C$, such that
\begin{equation}
\label{eq:InterpolationIneqS2_4}
\begin{aligned}
\frac{|x_iu_{x_i}(P_1) - x_iu_{x_i}(P_2)|}{\rho^{\alpha}(P_1,P_2)}
\leq C\eta^{\frac{1-\alpha}{2}}  \|u\|_{C^{2+\alpha}_{WF}([0,T]\times\bar S_{n,m})}.
\end{aligned}
\end{equation}
\end{case}

\begin{case}[Points with $t$-coordinates farther apart] 
\label{case:Coordinate_t_2}
Assuming that $|t_1-t_2|\geq\eta$, it immediately follows that
\begin{equation}
\label{eq:InterpolationIneqS2_5}
\begin{aligned}
\frac{|x_iu_{x_i}(P_1) - x_iu_{x_i}(P_2)|}{\rho^{\alpha}(P_1,P_2)}
&\leq& 2 \eta^{-\frac{\alpha}{2}} \|x_iu_{x_i}\|_{C([0,T]\times\bar S_{n,m})},
\end{aligned}
\end{equation}
which concludes this case.
\end{case}
By combining inequalities \eqref{eq:InterpolationIneqS2_4} and \eqref{eq:InterpolationIneqS2_5}, we obtain
\begin{equation}
\label{eq:InterpolationIneqS3_100}
\begin{aligned}
\frac{|x^1_iu_{x_i}(P_1) - x^2_iu_{x_i}(P_2)|}{\rho^{\alpha}(P_1,P_2)}
&\leq C\eta^{\frac{1-\alpha}{2}}  \|u\|_{C^{2+\alpha}_{WF}([0,T]\times\bar S_{n,m})} + 2 \eta^{-\frac{\alpha}{2}} \|u_{x_i}\|_{C([0,T]\times\bar S_{n,m})}.
\end{aligned}
\end{equation}
Combining inequalities \eqref{eq:InterpolationIneqS2_3}, \eqref{eq:InterpolationIneqS2_10} and \eqref{eq:InterpolationIneqS3_100} it follows that
\begin{equation*}
\begin{aligned}
\frac{|x_i^1u_{x_i}(P_1)-x_i^2u_{x_i}(P_2)|}{\rho^{\alpha}(P_1,P_2)}
&\leq C\left(\eta^{1-\alpha}+\eps\right) \|u\|_{C^{2+\alpha}_{WF}([0,T]\times\bar S_{n,m})} \\
&\quad+ 2 \eta^{-\frac{\alpha}{2}} \|u_{x_i}\|_{C([0,T]\times\bar S_{n,m})}+C\left(\eta^{-m_0(1+\alpha)-\alpha}+\eps^{-m_0}\right) \|u\|_{C([0,T]\times\bar S_{n,m})}.
\end{aligned}
\end{equation*}
Choosing $\eta=\eta(\eps)\in (0,1)$ small enough and using inequality \eqref{eq:InterpolationIneqS2}, we immediately obtain the interpolation inequality \eqref{eq:InterpolationIneqS3}. This concludes the proof of Step \ref{step:Proof_InterpolationIneqS3}.
\end{step}

This completes the proof of Proposition \ref{prop:InterpolationIneqS}.
\end{proof}

\section{Local a priori Schauder estimates}
\label{sec:Time_interior_estimate}
In this section we give the proofs of Theorems \ref{thm:Time_interior_estimate} and \ref{thm:Interior_estimate}. Our proof is based on a localization procedure of N.V. Krylov used in the proof of \cite[Theorem 8.11.1]{Krylov_LecturesHolder}, the interpolation inequalities for anisotropic H\"older spaces in Corollary \ref{cor:Higher_order_interpolation_inequalities}, and the global a priori Schauder estimates obtained in \cite[Theorem 10.0.2]{Epstein_Mazzeo_annmathstudies}. We begin with stating the properties of the coefficients of the differential operator $L$. 
\begin{assump}[Coefficients]
\label{assump:Coeff}
There is a nonnegative integer, $k$, and positive constants, $\delta$ and $K$, such that
\begin{enumerate}
\item[1.] The second-order coefficient functions satisfy the \emph{strict ellipticity condition}: for all sets of indices, $I\subseteq \{1,\ldots,n\}$, for all $z \in \bar M_I$, $\xi\in\RR^n$ and $\eta\in\RR^m$, we have 
\begin{equation}
\label{eq:Ellipticity}
\begin{aligned}
&\sum_{i\in I} a_{ii}(z)\xi_i^2
 +\sum_{i\in I^c} x_ia_{ii}(z)\xi_i^2
 + \sum_{i,j\in I} \tilde a_{ij}(z)\xi_i\xi_j
 +\sum_{i\in I} \sum_{j\in I^c} x_j(\tilde a_{ij}(z)+\tilde a_{ji}(z))\xi_i\xi_j\\
&+ \sum_{i,j\in I^c} x_ix_j\tilde a_{ij}(z)\xi_i\xi_j
 +\sum_{i\in I} \sum_{l=1}^m c_{il}(z)\xi_i\eta_l + \sum_{i\in I^c} \sum_{l=1}^m x_ic_{il}(z)\xi_i\eta_l
+\sum_{k,l=1}^m d_{kl}(z)\eta_k\eta_l\\
&\quad\geq \delta\left(|\xi|^2+|\eta|^2\right).
\end{aligned}
\end{equation}
\item[2.] The coefficient functions are \emph{H\"older continuous}: for all sets of indices, $I\subseteq \{1,\ldots,n\}$, we have 
\begin{equation}
\label{eq:Holder_cont}
\begin{aligned}
&\sum_{i\in I} \|a_{ii}\|_{C^{k,\alpha}_{WF}(\bar M_I)} 
+\sum_{i\in I^c} \|x_ia_{ii}\|_{C^{k,\alpha}_{WF}(\bar M_I)} 
+ \sum_{i,j\in I} \|\tilde a_{ij}\|_{C^{k,\alpha}_{WF}(\bar M_I)}\\
& +\sum_{i\in I} \sum_{j\in I^c} \left(\|x_j\tilde a_{ij}\|_{C^{k,\alpha}_{WF}(\bar M_I)}+\|x_j\tilde a_{ji}\|_{C^{k,\alpha}_{WF}(\bar M_I)}\right)
+ \sum_{i,j\in I^c} \|x_ix_j\tilde a_{ij}(z)\|_{C^{k,\alpha}_{WF}(\bar M_I)}\\
&+\sum_{i\in I} \sum_{l=1}^m \|c_{il}\|_{C^{k,\alpha}_{WF}(\bar M_I)} 
+ \sum_{i\in I^c} \sum_{l=1}^m \|x_ic_{il}\|_{C^{k,\alpha}_{WF}(\bar M_I)}
+\sum_{k,l=1}^m \|d_{kl}\|_{C^{k,\alpha}_{WF}(\bar M_I)}\\
& +\sum_{i=1}^n \|b_i\|_{C^{k,\alpha}_{WF}(\bar M_I)}
+\sum_{k=1}^m \|e_k\|_{C^{k,\alpha}_{WF}(\bar M_I)}\\
&\quad\leq K.
\end{aligned}
\end{equation}
\item[3.] The drift coefficient functions satisfy the \emph{nonnegativity condition}:
\begin{align}
\label{eq:b_coeff_bound}
b_{i}(z) \geq 0&\quad\hbox{on } \{z=(x,y)\in\partial S_{n,m}:\ x_i=0\},\quad \forall\, i=1,\ldots,n.
\end{align}
\end{enumerate}
\end{assump}

We can now give the
\begin{proof}[Proof of Theorem \ref{thm:Time_interior_estimate}]
Remark \ref{rmk:Reduction_to_one_set_of_indices} applies to the set $M'_I$ in place of $M''_I$, defined in \eqref{eq:M_I_prim} and \eqref{eq:M_I_secund}, respectively.  Thus, without loss of generality we may assume that $B_{2r}(z^0)\subset M'_{\{1,\ldots,n\}}$, when $r>0$ is chosen small enough. Let $L_0$ be defined similarly to the differential operator $L$, but with coefficients replaced by their values at $z^0$, that is,
\begin{equation*}
\begin{aligned}
L_0u&= \sum_{i=1}^n \left(x_ia_{ii}(z^0)u_{x_ix_i} + b_i(z^0)u_{x_i}\right) + \sum_{i,j=1}^n x_ix_j \tilde a_{ij}(z^0)u_{x_ix_j} \\
&\quad+\sum_{i=1}^n\sum_{l=1}^m x_i c_{il}(z^0)u_{x_iy_l}
 +\sum_{k,l=1}^m d_{kl}(z^0)u_{y_ky_l}+ \sum_{l=1}^m e_l(z^0)u_{y_l}.
\end{aligned}
\end{equation*}
Let $\varphi:\RR\rightarrow [0,1]$ be a smooth function such that $\varphi(t)=0$ for $t<0$, and $\varphi(t)=1$ for $t>1$. Let
$$
r_N=r\sum_{i=0}^{N} \frac{1}{2^i},\quad\forall\, N\in\NN,
$$
and consider the sequence of smooth cut-off functions, $\{\eta_N\}_{N\geq 1}\subset C^\infty_c(\bar S_{n,m})$, defined by
\[
\eta_N(z):=\varphi\left(\frac{r_{N+1}-|z-z^0|}{r_{N+1}-r_N}\right), \quad \forall\, z\in \bar S_{n,m},\quad\forall\, N\in\NN.
\]
We also let,
\[
T_N=\frac{T_0}{2} +\frac{T_0}{2}\left(2-\sum_{i=0}^{N} \frac{1}{2^i}\right),\quad\forall\, N\in\NN,
\]
and consider the sequence of smooth cut-off functions, $\{\psi_N\}_{N\geq 1}\subset C^\infty_c([0,\infty))$, defined by
\[
\psi_N(t):=\varphi\left(\frac{T_{N+1}-t}{T_{N+1}-T_N}\right), \quad \forall\, t\in [0,\infty),\quad\forall\, N\in\NN.
\]
We set $\varphi_N(t,z):=\psi_N(t)\eta_N(z)$, for all $(t,z)\in [0,T]\times \bar S_{n,m}$, and we let $Q_N:=[T_N,T]\times B_{r_N}(z^0)$. Then, we see that $0\leq \varphi_N\leq 1$, and $\varphi_N \equiv 1$ on $Q_N$, and $\varphi_N\equiv 0$ on $Q^c_{N+1}$, where $Q^c_{N+1}$ denotes the complement of $Q_{N+1}$ in $[0,T]\times\bar S_{n,m}$. 
By \cite[Lemma 10.1.3]{{Epstein_Mazzeo_annmathstudies}}, we can find a positive constant, $c=c(k,m,n,\varphi)$, such that for all $N\in\NN$, we have that
\begin{equation}
\label{eq:PropCutOffFunction}
\begin{aligned}
\|D^{\tau}_tD^{\zeta}_z\varphi_N\|_{C^{\alpha}_{WF}([0,T]\times\bar S_{n,m})}\leq c \rho^N\left(r^{-(k+3)}+T_0^{-(k+3)}\right),
\end{aligned}
\end{equation}
for all $\tau\in\NN$, $\zeta\in\NN^{n+m}$, such that $2\tau+|\zeta|=k+2$, where $\rho:=2^{k+3}>1$. We let
\begin{equation}
\label{eq:DefinitionAlpha}
\begin{aligned}
\alpha_N &:= \|u\varphi_N\|_{C^{k,2+\alpha}_{WF}([0,T]\times\bar S_{n,m})},\quad\forall\, N\in\NN.
\end{aligned}
\end{equation}
We may assume that $B_{2r}(z^0)$ is included in a compact manifold with corners, $P$ (see \cite[\S 2]{Epstein_Mazzeo_annmathstudies} for the definition of compact manifolds with corners). We extend the operator $L_0$ from $\bar B_{2r}(z^0)$ to $P$ such that it satisfies the hypotheses of \cite[Theorem 10.0.2]{Epstein_Mazzeo_annmathstudies}, and we obtain that there is a positive constant, $C=C(\alpha,\delta,k,K,m,n,T)$, such that
\begin{align}
\alpha_N &\leq  C\|(u\varphi_N)_t-L_0(u\varphi_N)\|_{C^{k,\alpha}_{WF}([0,T]\times\bar S_{n,m})}\notag\\
& \label{eq:ADDLocalAprioriBoundaryEst}
\leq C\|(u\varphi_N)_t-L(u\varphi_N)\|_{C^{k,\alpha}_{WF}([0,T]\times\bar S_{n,m})}
+C\|(L_0-L)(u\varphi_N)\|_{C^{k,\alpha}_{WF}([0,T]\times\bar S_{n,m})}.
\end{align}
We apply Lemma \ref{lem:Estimate_Lu} to the function $u\varphi_N$, the set $U=B_{r_N}(z_0)$, and with $L$ replaced by the operator $L_0-L$, to estimate the last term in the preceding inequality. Notice that because the coefficients of the operator $L$ are assumed to belong to $C^{\alpha}_{WF}([0,T]\times\bar S_{n,m})$, the constant $\Lambda$ in \eqref{eq:Constant_estimate_Lu} satisfies the bound $\Lambda\leq C r^{\alpha/2}$, where $C=C(K,m,n)$. It then follows that there are positive constants, $C=C(\alpha,k,K,m,n)$ and $m_k=m_k(\alpha,k,m,n)$, such that for all $\eps\in(0,1)$, we have
\begin{equation}
\label{eq:Estimate_second_term}
\begin{aligned}
\|(L_0-L)(u\varphi_N)\|_{C^{k,\alpha}_{WF}([0,T]\times\bar S_{n,m})} 
&\leq C(r^{\alpha/2}+\eps) \|u\varphi_N\|_{C^{k,2+\alpha}_{WF}([0,T]\times\bar S_{n,m})}\\
&\quad + C\eps^{-m_k}\|u\varphi_N\|_{C([0,T]\times\bar S_{n,m})}.
\end{aligned}
\end{equation}
We now estimate the first term on the right-hand side of inequality \eqref{eq:ADDLocalAprioriBoundaryEst}. We have that 
\begin{equation}
\label{eq:Operator_u_cutoff}
(u\varphi_N)_t-L(u\varphi_N) = \varphi_N(u_t- Lu) +u(\varphi_N)_t- [L,\varphi_N]u,
\end{equation}
where the last term is given by
\begin{equation}
\label{eq:ConstantCoeff2}
\begin{aligned}
\left[L,\varphi_N\right]u &= uL\varphi_N+2\sum_{i=1}^n  x_i a_{ii} u_{x_i} (\varphi_N)_{x_i} 
 +\sum_{i,j=1}^n x_ix_j\tilde a_{ij}\left(u_{x_i}(\varphi_N)_{x_j}+ u_{x_j}(\varphi_N)_{x_i} \right)\\
&\quad +\sum_{i=1}^n\sum_{l=1}^m x_i c_{il} \left(u_{x_i}(\varphi_N)_{y_l}+ u_{y_l}(\varphi_N)_{x_i} \right)
 +\sum_{l,k=1}^m d_{lk} \left(u_{y_l}(\varphi_N)_{y_k}+ u_{y_k}(\varphi_N)_{y_l} \right).
\end{aligned}
\end{equation}
Using \cite[Inequality (5.62)]{Epstein_Mazzeo_annmathstudies}, together with the fact that the support of the function $\varphi_N$ is included in the set $Q_{N+1}$, we have that
\begin{equation*}
\|\varphi_N (u_t- Lu)\|_{C^{k,\alpha}_{WF}([0,T]\times\bar S_{n,m})}
\leq 2\|\varphi_N\|_{C^{k,\alpha}_{WF}([0,T]\times\bar S_{n,m})} \|u_t- Lu\|_{C^{k,\alpha}_{WF}(\bar Q_{N+1})} ,
\end{equation*}
and estimate \eqref{eq:PropCutOffFunction} gives that there is a positive constant, $c=c(k,m,n,\varphi)$, such that
\begin{equation}
\label{eq:ConstantCoeff3}
\begin{aligned}
\|\varphi_N (u_t- Lu)\|_{C^{k,\alpha}_{WF}([0,T]\times\bar S_{n,m})}
\leq c \rho^N\left(r^{-(k+3)}+T_0^{-(k+3)}\right) \|u_t- Lu\|_{C^{k,\alpha}_{WF}([T_0/2,T]\times\bar B_{2r}(z^0))}.
\end{aligned}
\end{equation}
Because the coefficients of the operator $L$ satisfy inequality \eqref{eq:Holder_cont}, using estimate \eqref{eq:PropCutOffFunction}, we can find a positive constant, $C=C(\alpha,k,K,m,n)$, such that
\begin{equation*}
\begin{aligned}
&\|u(\varphi_N)_t -[L,\varphi_N]u\|_{C^{k,\alpha}_{WF}([0,T]\times\bar S_{n,m})}\\
&\qquad\leq C \rho^N\left(r^{-(k+3)}+T_0^{-(k+3)}\right) \left(\|u\varphi_{N+1}\|_{C^{k,\alpha}_{WF}([0,T]\times\bar S_{n,m})} \right.\\
&\qquad\quad\left.
+\sum_{i=1}^n\|x_i (u\varphi_{N+1})_{x_i}\|_{C^{k,\alpha}_{WF}([0,T]\times\bar S_{n,m})} 
+ \sum_{l=1}^m\|(u\varphi_{N+1})_{y_l}\|_{C^{k,\alpha}_{WF}([0,T]\times\bar S_{n,m})} \right).
\end{aligned}
\end{equation*}
The interpolation inequalities \eqref{eq:InterpolationIneqS1_higher_order}, \eqref{eq:InterpolationIneqS3_higher_order}, and \eqref{eq:InterpolationIneqS3_secund_higher_order}, together with the preceding inequality, equality \eqref{eq:Operator_u_cutoff} and estimate \eqref{eq:ConstantCoeff3} give us, for all $\eps\in(0,1)$,
\begin{equation}
\label{eq:ConstantCoeff5}
\begin{aligned}
&\|(u\varphi_N)_t-L(u\varphi_N)\|_{C^{k,\alpha}_{WF}([0,T]\times\bar S_{n,m})}\\
&\qquad\leq C \rho^N\left(r^{-(k+3)}+T_0^{-(k+3)}\right)
\left(\|u_t-Lu\|_{C^{k,\alpha}_{WF}([T_0/2,T]\times\bar B_{2r}(z^0))}\right.\\
&\qquad\quad\left.+\eps \|u\varphi_{N+1}\|_{C^{k,2+\alpha}_{WF}([0,T]\times\bar S_{n,m})} 
+ \eps^{-m_k} \|u\varphi_{N+1}\|_{C([0,T]\times\bar S_{n,m})} \right),
\end{aligned}
\end{equation}
where $C=C(\alpha,k,K,m,n,T)$ is a positive constant. Combining inequalities \eqref{eq:ADDLocalAprioriBoundaryEst}, \eqref{eq:Estimate_second_term} and \eqref{eq:ConstantCoeff5}, it follows that
\begin{equation}
\label{eq:Inequality_alpha_N}
\begin{aligned}
\alpha_N &\leq C \left(\eps\rho^N\left(r^{-(k+3)}+T_0^{-(k+3)}\right) +r^{\alpha/2}+\eps\right) \alpha_{N+1} \\
&\quad+ C \rho^N\left(r^{-(k+3)}+T_0^{-(k+3)}\right)\|u_t-Lu\|_{C^{k,\alpha}_{WF}([T_0/2,T]\times\bar B_{2r}(z^0))}\\
&\quad+ C \rho^N\left(r^{-(k+3)}+T_0^{-(k+3)}\right)\eps^{-m_k} \|u\varphi_{N+1}\|_{C([0,T]\times\bar S_{n,m})},
\end{aligned}
\end{equation}
where $C=C(\alpha,\delta,k,K,m,n,T)$ is a positive constant. Let $\gamma\in (0,1)$ be chosen such that
\begin{equation}
\label{eq:Choicedelta}
\rho^{1+m_k}\gamma \leq \frac{1}{2},
\end{equation}
Let $r_0=r_0(\alpha,k,m,n)$ be a positive constant such that 
$$
Cr_0^{\alpha/2} \leq \frac{\gamma}{3},
$$
and given any $r\in (0,r_0)$ and $T_0\in (0,T)$, choose $\eps=\eps(r,T_0)\in (0,1)$ such that
$$
C\eps\rho^N\left(r^{-(k+3)}+T_0^{-(k+3)}\right) = \frac{\gamma}{3},\quad\hbox{and}\quad C\eps\leq\frac{\gamma}{3}.
$$
Then we can rewrite inequality \eqref{eq:Inequality_alpha_N} in the form
\begin{align*}
\alpha_N &\leq \gamma \alpha_{N+1} 
+ C \rho^N\left(r^{-(k+3)}+T_0^{-(k+3)}\right) \|u_t-Lu\|_{C^{k,\alpha}_{WF}([T_0/2,T]\times\bar B_{2r}(z^0))}\\
&\quad+ (3C)^{1+m_k}\left(r^{-(k+3)}+T_0^{-(k+3)}\right)^{1+m_k}\gamma^{-m_k} \rho^{(1+m_k)N} \|u\varphi_{N+1}\|_{C([0,T]\times\bar S_{n,m})}.
\end{align*}
We multiply the preceding inequality by $\gamma^N$, and we let
\[
C_1:=\max\left\{C \left(r^{-(k+3)}+T_0^{-(k+3)}\right), (3C)^{1+m_k} \left(r^{-(k+3)}+T_0^{-(k+3)}\right)^{1+m_k}\gamma^{-m_k}\right\}.
\]
Then using inequality \eqref{eq:Choicedelta}, we obtain for all $r\in (0,r_0)$, 
\begin{equation*}
\begin{aligned}
\gamma^N \alpha_N &\leq \gamma^{N+1} \alpha_{N+1}
  +\frac{C_1}{2^N} \left(\|u_t-Lu\|_{C^{k,\alpha}_{WF}([T_0/2,T]\times\bar B_{2r}(z^0))}  + \|u\|_{C([T_0/2,T]\times\bar B_{2R}(z^0))} \right).
\end{aligned}
\end{equation*}
Summing the terms of the preceding inequality yields
\begin{equation*}
\begin{aligned}
\sum_{N=0}^{\infty} \gamma^N \alpha_N &\leq \sum_{N=0}^{\infty} \gamma^{N+1} \alpha_{N+1}
  + 2C_1\left(\|u_t-Lu\|_{C^{k,\alpha}_{WF}([T_0/2,T]\times\bar B_{2r}(z^0))}  + \|u\|_{C([T_0/2,T]\times\bar B_{2r}(z^0))} \right).
\end{aligned}
\end{equation*}
The sum $\sum_{N=0}^{\infty} \gamma^N \alpha_N$ is well-defined because we assumed that the function $u$ belongs to the space of functions $C^{k,2+\alpha}_{WF}([T_0/2,T]\times \bar B_{2r}(z^0))$, while $\gamma \in (0,1)$. By subtracting the term $\sum_{N=1}^{\infty} \gamma^N \alpha_N$ from both sides of the preceding inequality, we obtain the desired inequality \eqref{eq:Time_interior_estimate}.
\end{proof}

\begin{proof}[Proof of Theorem \ref{thm:Interior_estimate}]
We can use the same argument to prove Theorem \ref{thm:Interior_estimate} that we used in the proof of Theorem \ref{thm:Time_interior_estimate}, with the only modification that the function $\psi_N$ is no longer needed, and $T_N$ is chosen to be $0$, for all $N\in\NN$.
\end{proof}

\section{Existence and uniqueness of solutions}
\label{sec:Existence_solutions}

In this section, we give the proofs of Theorems \ref{thm:ExistenceUniqueness} and \ref{thm:ExistenceUniqueness_continuous}. The proof of Theorem \ref{thm:ExistenceUniqueness} relies on the a priori Schauder estimates established in \cite[Theorem 10.0.2]{Epstein_Mazzeo_annmathstudies}, and the supremum estimates derived from \cite[Proposition 3.3.1]{Epstein_Mazzeo_annmathstudies}. Finally, the existence result described in Theorem \ref{thm:ExistenceUniqueness_continuous}, for continuous initial data, is based on Theorem \ref{thm:ExistenceUniqueness} and a compactness argument which uses the a priori Schauder estimates in Theorem \ref{thm:Time_interior_estimate}.

We begin with

\begin{prop}[Comparison principle]
\label{prop:Comparison_principle}
Let $T>0$, and assume that the coefficients of the differential operator $L$ have the property that
$$
a_{ii}, \tilde a_{ij}, b_i, c_{il}, d_{lk}, e_l \in C([0,T]\times\bar S_{n,m}),
$$
for all $i,j=1,\ldots,n$ and all $l,k=1,\ldots,m$. Let $u$ be a function such that
$$
u \in C([0,T]\times\bar S_{n,m}) \cap C^1((0,T]\times\bar S_{n,m}) \cap C^2((0,T)\times S_{n,m}),
$$
and such that
$$
\lim_{x_i\downarrow 0} x_i u_{x_ix_i}(t,z)=0,\quad\forall\, t\in (0,T],\quad\forall\, i=1,\ldots,n.
$$
Assume that $u$ satisfies
\begin{align*}
u_t-Lu & \leq 0\quad\hbox{on } (0,T)\times\bar S_{n,m},\\
u(0,\cdot)&\leq 0\quad\hbox{on }  S_{n,m}.
\end{align*}
Then, we have that $u\leq 0$ on $[0,T]\times\bar S_{n,m}$.
\end{prop}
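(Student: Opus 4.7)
The plan is to argue by a standard parabolic maximum principle, adapted to the degenerate Kimura structure and the unbounded cylinder $[0,T]\times\bar S_{n,m}$. My first step is to introduce the growing barrier $\phi(t,z):=e^{\lambda t}(1+|z|^2)$. Using the boundedness of the coefficients together with the explicit form of $L$ in \eqref{eq:Generator}, a direct computation gives $L\phi\leq C\phi$ on $(0,T]\times\bar S_{n,m}$ for some $C=C(K,m,n)$, and therefore choosing $\lambda>C$ produces $\phi_t-L\phi\geq\delta\phi$ for some $\delta>0$. For $\varepsilon>0$, I would then set $w_\varepsilon:=u-\varepsilon\phi$, which by construction satisfies $w_{\varepsilon,t}-Lw_\varepsilon<0$ strictly on $(0,T]\times\bar S_{n,m}$ and $w_\varepsilon(0,\cdot)<0$ on $\bar S_{n,m}$, and which tends to $-\infty$ uniformly in $t\in[0,T]$ as $|z|\to\infty$ since $u$ is bounded on $[0,T]\times\bar S_{n,m}$.

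The core task is to prove $\sup_{[0,T]\times\bar S_{n,m}} w_\varepsilon\leq 0$ for every $\varepsilon>0$, since then $u\leq\varepsilon\phi$ everywhere, and letting $\varepsilon\downarrow 0$ yields the desired conclusion $u\leq 0$. I would argue by contradiction. If $\sup w_\varepsilon>0$, the coercivity at infinity and the continuity of $w_\varepsilon$ force this supremum to be attained at some point $(t^*,z^*)$, and the initial bound $w_\varepsilon(0,\cdot)<0$ forces $t^*\in(0,T]$. In the easy case $z^*\in S_{n,m}$, the classical parabolic maximum principle applies verbatim: $w_{\varepsilon,t}(t^*,z^*)\geq 0$, $Dw_\varepsilon(t^*,z^*)=0$, and $D^2 w_\varepsilon(t^*,z^*)\leq 0$; the positive semi-definiteness of the second-order coefficient matrix of $L$ in $S_{n,m}$, which follows from the strict ellipticity condition \eqref{eq:Ellipticity}, then forces $Lw_\varepsilon(t^*,z^*)\leq 0$, contradicting $w_{\varepsilon,t}-Lw_\varepsilon<0$.

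The main obstacle is the boundary case $z^*\in\bar M_I$ for some nonempty $I\subseteq\{1,\ldots,n\}$, since $u$ is only $C^1$ up to $\partial S_{n,m}$ and $L$ degenerates on this stratum. Here I would interpret $Lu(t^*,z^*)$ as the limit of $Lu(t^*,z)$ along interior sequences $z_k\in S_{n,m}$ with $z_k\to z^*$. Using the hypothesis $\lim_{x_i\downarrow 0}x_iu_{x_ix_i}=0$, the diagonal degenerate terms $x_ia_{ii}u_{x_ix_i}$ for $i\in I$ vanish in the limit, and a Cauchy-Schwarz bound of the type $|w_{\varepsilon,x_ix_j}|^2\leq w_{\varepsilon,x_ix_i}w_{\varepsilon,x_jx_j}$ (valid because $-D^2 w_\varepsilon$ is asymptotically positive semi-definite near the maximum) then propagates this vanishing to the mixed terms $x_ix_j\tilde a_{ij}u_{x_ix_j}$ and $x_ic_{il}u_{x_iy_l}$ with $i\in I$. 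The surviving contribution at $(t^*,z^*)$ splits into the principal part intrinsic to the stratum $\bar M_I$, which is nonpositive by \eqref{eq:Ellipticity} restricted to $\bar M_I$ together with the nonpositivity of the stratum-tangential Hessian of $w_\varepsilon$ at the relative maximum, and the boundary drift contribution $\sum_{i\in I}b_i(z^*)w_{\varepsilon,x_i}(t^*,z^*)$, which is nonpositive by the sign hypothesis \eqref{eq:b_coeff_bound} and the one-sided inequality $w_{\varepsilon,x_i}(t^*,z^*)\leq 0$ at a maximum on the face $\{x_i=0\}$. Thus $Lw_\varepsilon(t^*,z^*)\leq 0$, again contradicting $w_{\varepsilon,t}-Lw_\varepsilon<0$, and the proof is completed by sending $\varepsilon\downarrow 0$.

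The delicate step on which the whole argument hinges is the boundary case: the careful matching of the stratified ellipticity condition \eqref{eq:Ellipticity} to the stratum $\bar M_I$ containing $z^*$, combined with the Cauchy-Schwarz vanishing of the degenerate mixed second-order terms, both require the pointwise limit hypothesis $\lim_{x_i\downarrow 0}x_iu_{x_ix_i}=0$ in an essential way.
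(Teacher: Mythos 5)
Your overall strategy is sound and essentially the standard one: a growing barrier $\phi(t,z)=e^{\lambda t}(1+|z|^2)$ to force the supremum of $w_\varepsilon=u-\varepsilon\phi$ to be attained, followed by a maximum-principle contradiction, with the degenerate boundary handled through the hypothesis $\lim_{x_i\downarrow 0}x_iu_{x_ix_i}=0$ and the drift sign \eqref{eq:b_coeff_bound}. The barrier computation, the reduction to $\varepsilon\downarrow 0$, and the interior case are all correct. The paper itself defers to \cite[Proposition~3.1.1]{Epstein_Mazzeo_annmathstudies}, and your argument is in the same spirit.

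However, the boundary case --- which you rightly flag as the delicate step --- has a genuine gap. First, the Cauchy--Schwarz inequality $|w_{\varepsilon,x_ix_j}|^2\leq w_{\varepsilon,x_ix_i}w_{\varepsilon,x_jx_j}$ is invoked on the grounds that $-D^2w_\varepsilon$ is ``asymptotically positive semi-definite near the maximum.'' This is false when $z^*\in\partial S_{n,m}$: the Hessian at interior points near a \emph{boundary} maximum need not be negative semi-definite. The one-dimensional example $w(x)=-x+\tfrac12 x^2$ on $[0,\infty)$ has its maximum at $x=0$, yet $w''\equiv 1>0$ --- and note that $xw''\to 0$, so the hypothesis does not rule this out. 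Second, and more seriously, the tangential second-order terms $\sum_{j\notin I}x_ja_{jj}u_{x_jx_j}$, $\sum_{i,j\notin I}x_ix_j\tilde a_{ij}u_{x_ix_j}$, $\sum_{j\notin I,l}x_jc_{jl}u_{x_jy_l}$, and $\sum_{k,l}d_{kl}u_{y_ky_l}$ carry weights bounded away from zero at $z^*$, and you claim they are controlled by ``the nonpositivity of the stratum-tangential Hessian of $w_\varepsilon$ at the relative maximum.'' But under the stated regularity $u\in C^1((0,T]\times\bar S_{n,m})\cap C^2((0,T)\times S_{n,m})$, the tangential second derivatives are \emph{not} continuous up to $\partial S_{n,m}$, so there is no tangential Hessian ``at $z^*$'' to speak of; they can only be probed as limits along interior sequences, and neither the $C^1$-regularity nor the condition $\lim_{x_i\downarrow 0}x_iu_{x_ix_i}=0$ (which concerns only the normal directions $i\in I$) controls these limits. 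Closing this gap requires either more regularity near $\partial S_{n,m}$ (e.g., $C^{2+\alpha}_{WF}$ up to the boundary, which makes the tangential Hessian at $z^*$ well-defined and $\leq 0$), or a different mechanism such as a perturbation that pushes the maximum into the interior, or the stratum-by-stratum argument used in Epstein--Mazzeo.
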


\begin{proof}
The proof is very similar to that of \cite[Proposition 3.1.1]{Epstein_Mazzeo_annmathstudies}, and so, we omit its detailed proof for brevity.
\end{proof}

We have the following consequence of Proposition \ref{prop:Comparison_principle}.
\begin{cor}[Maximum principle]
\label{cor:Maximum_principle}
Assume that the hypotheses of Proposition \ref{prop:Comparison_principle} hold, and that $u$ is a solution to the inhomogeneous initial-value problem \eqref{eq:Inhom_initial_value_problem}, where $f\in C(\bar S_{n,m})$ and $g\in C([0,T]\times\bar S_{n,m})$. Then
\begin{equation}
\label{eq:Maximum_principle}
\|u\|_{C([0,T]\times\bar S_{n,m})} \leq \|f\|_{C(\bar S_{n,m})} + T\|g\|_{C([0,T]\times\bar S_{n,m})}. 
\end{equation}
\end{cor}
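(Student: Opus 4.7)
The plan is to deduce the bound as a routine application of the comparison principle (Proposition \ref{prop:Comparison_principle}) by constructing an explicit spatially constant barrier function.

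First I would define the barrier
\[
v(t,z) := \|f\|_{C(\bar S_{n,m})} + t\,\|g\|_{C([0,T]\times\bar S_{n,m})},
\]
which is smooth and independent of the spatial variable $z$. Because every spatial derivative of $v$ vanishes, we have $Lv\equiv 0$ on $[0,T]\times\bar S_{n,m}$, and hence $v_t - Lv = \|g\|_{C([0,T]\times\bar S_{n,m})}$ everywhere, while $v(0,\cdot) = \|f\|_{C(\bar S_{n,m})}$.

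Next I would set $w := u - v$. Since $v$ is smooth and constant in $z$, the function $w$ inherits all of the regularity hypotheses of Proposition \ref{prop:Comparison_principle} from $u$; in particular $\lim_{x_i\downarrow 0} x_i w_{x_ix_i} = \lim_{x_i\downarrow 0} x_i u_{x_ix_i} = 0$. A direct calculation gives
\[
w_t - Lw = g - \|g\|_{C([0,T]\times\bar S_{n,m})} \leq 0 \quad\text{on } (0,T)\times\bar S_{n,m},
\]
and $w(0,\cdot) = f - \|f\|_{C(\bar S_{n,m})} \leq 0$ on $S_{n,m}$. Proposition \ref{prop:Comparison_principle} then yields $w \leq 0$ on $[0,T]\times\bar S_{n,m}$, that is,
\[
u(t,z) \leq \|f\|_{C(\bar S_{n,m})} + T\|g\|_{C([0,T]\times\bar S_{n,m})}.
\]

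Finally, I would repeat the same argument applied to $-u$, which solves the initial-value problem with source $-g$ and initial data $-f$, using the barrier $\tilde v(t,z) := \|f\|_{C(\bar S_{n,m})} + t\|g\|_{C([0,T]\times\bar S_{n,m})}$ in the same way. This yields $-u \leq \|f\|_{C(\bar S_{n,m})} + T\|g\|_{C([0,T]\times\bar S_{n,m})}$, and combining both one-sided bounds gives the pointwise estimate $|u(t,z)| \leq \|f\|_{C(\bar S_{n,m})} + T\|g\|_{C([0,T]\times\bar S_{n,m})}$, hence \eqref{eq:Maximum_principle}. There is no real obstacle here beyond verifying that the barrier $v$ is smooth enough for the comparison principle to apply to $w = u - v$, which is immediate since $v$ depends only on $t$.
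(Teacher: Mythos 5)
Your proof is correct and matches the paper's own argument essentially verbatim: the paper likewise introduces the spatially constant barrier $v(t,z) = \|f\|_{C(\bar S_{n,m})} + t\|g\|_{C([0,T]\times\bar S_{n,m})}$ and applies Proposition \ref{prop:Comparison_principle} to $\pm u - v$. You have merely spelled out the two verifications ($Lv = 0$ since $v$ is $z$-independent, and the inherited regularity and boundary behavior of $w = \pm u - v$) that the paper leaves implicit.
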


\begin{proof}
We consider the auxiliary function,
$$
v(t,z):= \|f\|_{C([0,T]\times\bar S_{n,m})} + t\|g\|_{C([0,T]\times\bar S_{n,m})},\quad\forall\, (t,z)\in [0,T]\times\bar S_{n,m},
$$
and we apply Proposition \ref{prop:Comparison_principle} to $\pm u-v$. The supremum estimate \eqref{eq:Maximum_principle} follows immediately.
\end{proof}

We can now give the
\begin{proof}[Proof of Theorem \ref{thm:ExistenceUniqueness}]
Uniqueness of solutions is a straightforward consequence of Proposition \ref{prop:Comparison_principle}, and so, we only consider the question of existence of solutions. 
%TODO: Give a more general reference to when this can be applied
The proof of existence employs the method used in proving existence of solutions to parabolic partial differential equations outlined in \cite[Theorem II.1.1]{DaskalHamilton1998}. We let $\widehat C^{k,2+\alpha}_{WF}([0,T]\times\bar S_{n,m})$ denote the Banach space of functions $u\in C^{k,2+\alpha}_{WF}([0,T]\times\bar S_{n,m})$ such that $u(0,\cdot)\equiv 0$ on $\bar S_{n,m}$. Without loss of generality we may assume $f\equiv 0$ in the initial-value problem \eqref{eq:Inhom_initial_value_problem}, because the function $Lf \in C^{k,\alpha}_{WF}(\bar S_{n,m})$, when Assumption \ref{assump:Coeff} holds and $f\in C^{k,2+\alpha}_{WF}([0,T]\times\bar S_{n,m})$. We also have that
$$
\partial_t-L:\widehat C^{k,2+\alpha}_{WF}([0,T]\times\bar S_{n,m}) \rightarrow C^{k,\alpha}_{WF}([0,T]\times\bar S_{n,m})
$$
is a well-defined operator. Our goal is to show that $\partial_t-L$ is invertible and we accomplish this by constructing a bounded linear operator, $V:C^{k,\alpha}_{WF}([0,T]\times\bar S_{n,m}) \rightarrow \widehat C^{k,2+\alpha}_{WF}([0,T]\times\bar S_{n,m})$, such that
\begin{equation}
\label{eq:AlmostInverse}
\begin{aligned}
\left\|(\partial_t-L)V-I_{C^{k,\alpha}_{WF}([0,T]\times\bar S_{n,m})}\right\| <1.
\end{aligned}
\end{equation}
For this purpose, we fix $r>0$ and we choose a sequence of points, $\{z^N\}_{N\geq 1}$, such that the collection of balls $\{B_r(z^N)\}_{N\geq 1}$ covers the set $S_{n,m}\backslash M_{\emptyset}$. Without loss of generality, we may assume that there is a positive constant, $A=A(m,n)$, such that at most $A$ balls of the covering have non-empty intersection. Let $\{\varphi_N\}_{N\geq 0}\subset C^{\infty}_c(\bar S_{n,m})$ be a partition of unity subordinate to the open cover
\[
M_{\emptyset} \cup \bigcup_{N=1}^{\infty} B_r(z^N) = S_{n,m},
\]
such that
\[
\supp \varphi_0 \subset \{z \in S_{n,m}: \dist(z,\partial S_{n,m}) >r/2\},
\hbox{   and   }
\supp \varphi_N \subset \bar B_r(z^N), \quad \forall\, N\geq 1.
\]
We may choose the sequence of functions $\{\varphi_N\}_{N \geq 0}$ such that there is a positive constant, $c=c(k,m,n)$, such that
\begin{equation}
\label{eq:Eq0}
\begin{aligned}
\|\varphi_N\|_{C^{k,2+\alpha}_{WF}(\bar S_{n,m})} \leq c r^{-(k+3)}, \quad \forall\, r>0,\quad\forall\, N \geq 0.
\end{aligned}
\end{equation}
We choose a smooth function, $\psi_0\subset C^\infty(\bar S_{n,m})$, such that $0\leq \psi_0\leq 1$  and
$$
\psi_0(z) = \begin{cases} 0, &\hbox{on  } \{z \in S_{n,m}: \dist(z,\partial S_{n,m}) <r/8\}, \\ 
                          1, &\hbox{on  } \{z \in S_{n,m}: \dist(z,\partial S_{n,m}) >r/4\},
						\end{cases}
$$
and we choose a sequence of functions $\{\psi_N\}_{N\geq 1}$ such that $0\leq\psi_N\leq 1$ and $B_r(z^N)\subset \{\psi_N=1\}$. Thus, we have that
\begin{equation}
\label{eq:Eq1}
\psi_N \varphi_N = \varphi_N, \quad\forall\, N\geq 0,
\end{equation}
and without loss of generality we may assume that there is a positive constant, $c=c(k,m,n)$, with the property that
\begin{equation}
\label{eq:Eq0_psi}
\begin{aligned}
\|\psi_N\|_{C^{k,2+\alpha}_{WF}(\bar S_{n,m})} \leq c, \quad \forall\, r>0,\quad\forall\, N \geq 0.
\end{aligned}
\end{equation}
For $N=0$, let $L_0$ be a strictly elliptic operator on $\RR^{n+m}$ with $C^{k,\alpha}([0,T]\times R^{n+m})$-H\"older continuous coefficients, such that the operator $L_0$ agrees with $L$ on the support of the function $\psi_0$. This is possible due to our assumptions \eqref{eq:Ellipticity}, \eqref{eq:Holder_cont} and property \eqref{eq:Equivalent_intrinsic_metric} of the distance function $\rho$. We let
\begin{equation*}
\begin{aligned}
V_0: C^{k,\alpha}([0,T]\times\RR^{n+m})\rightarrow \widehat C^{k+2,\alpha}([0,T]\times\RR^{n+m}),
\end{aligned}
\end{equation*}
be the solution operator of $L_0$, that is, using \cite[Theorems 9.2.3 and 8.12.1]{Krylov_LecturesHolder}, we let $u:=V_0 g \in \widehat C^{k+2,\alpha}([0,T]\times\RR^{n+m})$ be the unique solution to the initial-value problem, $u_t-L_0u=g$ on $(0,T)\times\RR^{n+m}$, and $u(0,\cdot)=0$ on $\RR^{n+m}$, where $g\in C^{k,\alpha}([0,T]\times \RR^{n+m})$. For each $N\geq 1$, there is a set of indices, $I_N\subseteq \{1,2,\ldots,n\}$, such that $B_r(z^N)\subset M'_{I_N}$ and $\supp\psi_N\subset M''_{I_N}$. We then let $L_N$ be the degenerate-parabolic operator defined by
\begin{align*}
L_N u &:= \sum_{i\in I_N} x_i a_{ii}(z^N) u_{x_ix_i} + \sum_{i\notin I_N} x^N_i a_{ii}(z^N) u_{x_ix_i}+\sum_{i=1}^n b_i(z^N) u_{x_i}\\
&+\sum_{i,j\in I_N} x_ix_j \tilde a_{ij}(z^N) u_{x_ix_j} + \sum_{i\in I_N} \sum_{j\notin I_N} 2x_ix^N_j \tilde a_{ij}(z^N) u_{x_ix_j}
+\sum_{i,j\notin I_N} x_i^Nx_j^N\tilde a_{ij}(z^N) u_{x_ix_j}\\
&+\sum_{i\in I_N} \sum_{l=1}^m x_ic_{il}(z^N) u_{x_iy_l} +\sum_{i\notin I_N}\sum_{l=1}^m x_i^Nc_{il}(z^N)u_{x_iy_l}+\sum_{l,k=1}^m d_{lk}(z^N) u_{y_ly_k}+\sum_{k=1}^m e_l(z^N)u_{y_k}.
\end{align*}
Notice that the differential operator $L_N$ has the same structure as the operator $L$ defined in \eqref{eq:Generator}, with the observation that in this case, the number of `degenerate' coordinates, $n$, is replaced by $n_N:=|I_N|$, and the number of `non-degenerate' coordinates, $m$, is replaced by $m_N:=n+m-n_N$. By relabeling the coordinates, we may assume that the operator $L_N$ acts on functions defined on a compact manifold with corners, $P_N$ (see \cite[\S 2.1]{Epstein_Mazzeo_annmathstudies} for the definition of compact manifolds with corners). We choose the compact manifold $P_N$ such that 
$$
\supp \varphi_N \subset B_r(z^N) \subset \{\psi_N\equiv 1\} \subset \supp \psi_N \subset P_N,\quad\forall\, N\geq 1.
$$
We extend the coefficients of the operator $L_N$ from $\supp\psi_N$ to $P_N$ such that it satisfies the hypotheses of \cite[Theorem 10.0.2]{Epstein_Mazzeo_annmathstudies} to conclude that there is a solution operator,
\begin{equation*}
\begin{aligned}
V_N: C^{k,\alpha}_{WF}([0,T]\times P_N)\rightarrow \widehat C^{k,2+\alpha}_{WF}([0,T]\times P_N),
\end{aligned}
\end{equation*}
such that  $u:=V_N g \in \widehat C^{k,2+\alpha}_{WF}([0,T]\times P_N)$ is the unique solution to the initial-value problem, $u_t-L_Nu=g$ on $(0,T)\times P_N$, and $u(0,\cdot)\equiv 0$ on $P_N$, where $g\in C^{k,\alpha}_{WF}([0,T]\times P_N)$.

We can now define the operator
$$
V: C^{k,\alpha}_{WF}([0,T]\times\bar S_{n,m})\rightarrow \widehat C^{k,2+\alpha}_{WF}([0,T]\times\bar S_{n,m}),
$$
by setting
\begin{equation*}
\begin{aligned}
Vg := \sum_{N=0}^{\infty} \varphi_N V_N \psi_N g, \quad \forall\, g \in C^{k,\alpha}_{WF}([0,T]\times\bar S_{n,m}).
\end{aligned}
\end{equation*}
Our goal is to show that inequality \eqref{eq:AlmostInverse} holds, for small enough values of $r$ and $T$.  We have
\begin{equation*}
\begin{aligned}
(\partial_t-L)Vg-g &= \sum_{N=0}^{\infty} (\partial_t-L) \varphi_N V_N \left(\psi_N g\right) - g\\
&= \sum_{N=0}^{\infty}  \varphi_N (\partial_t-L) V_N \left(\psi_N g\right) - \sum_{N=0}^{\infty} [L,\varphi_N] V_N \left(\psi_N g\right) - g,
\end{aligned}
\end{equation*}
where the term $[L,\varphi_N]$ is given by \eqref{eq:ConstantCoeff2}. Denoting $u_N := V_N \left(\psi_N g\right)$, for all $N\geq 0$, we have
\begin{equation*}
\begin{aligned}
(\partial_t-L) V_N \left(\psi_N g\right) &= -(L-L_N) u_N + (\partial_t-L_N) V_N \left(\psi_N g\right)\\
&= -(L-L_N) u_N + \psi_N g,
\end{aligned}
\end{equation*}
since $(\partial_t-L_N)V_N = I$ on $\supp \psi_N$, for all $N\geq 0$. This implies, by identities \eqref{eq:Eq1} and the fact that $\{\varphi_N\}_{N\geq 0}$ is a partition of unity, that
\begin{equation}
\label{eq:Eq2}
\begin{aligned}
(\partial_t-L)Vg-g &= -\sum_{N=0}^{\infty}  \varphi_N (L-L_N) u_N - \sum_{N=0}^{\infty} [L,\varphi_N] u_N .
\end{aligned}
\end{equation}
We first estimate the terms in the preceding equality indexed by $N=0$. Because $L_0=L$ on the support of $\psi_0$, obviously we have that $\psi_0(L-L_0) u_0=0$. Next, using identity \eqref{eq:ConstantCoeff2}, there is a positive constant, $C=C(\alpha,k,K,m,n)$, such that
\begin{equation*}
\begin{aligned}
\left\|\left[L,\varphi_0\right]u_0\right\|_{C^{k,\alpha}([0,T]\times\bar S_{n,m})}
& \leq C \|\varphi_0\|_{C^{k+2,\alpha}([0,T]\times\bar S_{n,m})}\|u_0\|_{C^{k+1,\alpha}([0,T]\times\supp\varphi_0)} \\
& \leq C r^{-(k+3)}\|u_0\|_{C^{k+1,\alpha}([0,T]\times\supp\varphi_0)} \quad\hbox{(by \eqref{eq:Eq0}).}
\end{aligned}
\end{equation*}
Using the interpolation inequalities for standard H\"older spaces \cite[Theorem 8.8.1]{Krylov_LecturesHolder}, and the fact that the standard parabolic distance and the distance function $\rho$ are equivalent on $\supp \varphi_0$, by \eqref{eq:Equivalent_intrinsic_metric}, it follows that there is a positive constant, $m_k=m_k(\alpha,k,m,n)$, such that, for all $\eps\in (0,1)$, we have
\begin{equation}
\label{eq:Eq3}
\begin{aligned}
\left\|\left[L,\varphi_0\right]u_0\right\|_{C^{k,\alpha}_{WF}([0,T]\times\bar S_{n,m})}
& \leq C r^{-(k+3)} \left(\eps \|u_0\|_{C^{k,2+\alpha}_{WF}([0,T]\times\supp\varphi_0)}\right.\\
&\quad\left. + \eps^{-m_k} \|u_0\|_{C([0,T]\times\supp\varphi_0))} \right).
\end{aligned}
\end{equation}
By \cite[Theorem 8.12.1]{Krylov_LecturesHolder} and inequality \eqref{eq:Eq0_psi}, we have that there is a positive constant, $C=C(\alpha,k,K,m,n,T)$, such that
\begin{equation*}
\begin{aligned}
\|u_0\|_{C^{k,2+\alpha}_{WF}([0,T]\times \supp\varphi_0))} &\leq C r^{-(k+3)} \|g\|_{C^{k,\alpha}_{WF}([0,T]\times\bar S_{n,m})}.
\end{aligned}
\end{equation*}
From \cite[Corollary 8.1.5]{Krylov_LecturesHolder}, it follows that
$$
\|u_0\|_{C([0,T]\times\supp\varphi_0)} \leq T \|g\|_{C([0,T]\times\supp\psi_0)},
$$
and so, the preceding two inequalities together with \eqref{eq:Eq3}, give us that
\begin{equation}
\label{eq:Eq4}
\begin{aligned}
\left\|\left[L,\varphi_0\right]u_0\right\|_{C^{k,\alpha}_{WF}([0,T]\times \bar S_{n,m})}
& \leq Cr^{-(k+3)} \left(\eps \|g\|_{C^{k,\alpha}_{WF}([0,T]\times \bar S_{n,m})} + \eps^{-m_k} T\|g\|_{C([0,T]\times\bar S_{n,m})} \right).
\end{aligned}
\end{equation}
Next, we estimate the terms in identity \eqref{eq:Eq2} indexed by $N \geq 1$. From identity \eqref{eq:Eq1}, we have that
$\varphi_N (L-L_N) u_N = \varphi_N (L-L_N) (\psi_Nu_N)$. By Lemma \ref{lem:Estimate_varphi_Lu}, we have that there are positive constants, $C_r=C(\alpha,k,K,m,n,r,T)$ and $m_k=m_k(\alpha,k,m,n)$, such that for all $\eps\in (0,1)$, we have that
\begin{align*}
\|\varphi_N (L-L_N) u_N\|_{C^{k,\alpha}_{WF}([0,T]\times\bar S_{n,m})} 
&= \|\varphi_N (L-L_N) (\psi_Nu_N)\|_{C^{k,\alpha}_{WF}([0,T]\times\bar S_{n,m})} \\
&\leq \left(\Lambda \|\varphi_N\|_{C(\bar S_{n,m})}+C_r\eps\right)\|\psi_Nu_N\|_{C^{k,2+\alpha}_{WF}([0,T]\times\bar S_{n,m})}\\
&\quad + C_r\eps^{-m_k} \|\psi_Nu_N\|_{C([0,T]\times\bar S_{n,m})},
\end{align*}
Using the fact that $0\leq\psi_N\leq 1$ and $\supp\psi_N\subset P_N$, together with inequality \eqref{eq:Eq0_psi} and \cite[Inequality (5.62)]{Epstein_Mazzeo_annmathstudies}, we have that
\begin{align*}
\|\varphi_N (L-L_N) u_N\|_{C^{k,\alpha}_{WF}([0,T]\times\bar S_{n,m})} 
&\leq c\left(\Lambda \|\varphi_N\|_{C(\bar S_{n,m})}+C_r\eps\right)\|u_N\|_{C^{k,2+\alpha}_{WF}([0,T]\times P_N)}\\
&\quad + C_r\eps^{-m_k} \|u_N\|_{C([0,T]\times P_N)},
\end{align*}
Using the definition of the constant $\Lambda$ in \eqref{eq:Constant_estimate_Lu}, with $\bar U$ replaced by $\supp\varphi_N$, from our choice of the operator $L-L_N$, and property \eqref{eq:Holder_cont} of the coefficients of the operator $L$, we obtain that $\Lambda \leq C r^{\alpha/2}$. The preceding estimate becomes
\begin{align*}
\|\varphi_N (L-L_N) u_N\|_{C^{k,\alpha}_{WF}([0,T]\times\bar S_{n,m})} 
&\leq C\left(r^{\alpha/2}+C_r\eps \right)\|u_N\|_{C^{k,2+\alpha}_{WF}([0,T]\times P_N)}\\
&\quad + C_r\eps^{-m_k} \|u_N\|_{C([0,T]\times P_N)}.
\end{align*}
Applying \cite[Proposition 3.3.1]{Epstein_Mazzeo_annmathstudies} to the function $\pm u_N(t,z)-t\|g\|_{C([0,T]\times P_N)}$, we obtain that
$$
\|u_N\|_{C([0,T]\times P_N)} \leq T \|g\|_{C([0,T]\times P_N)},
$$
while the a priori Schauder estimates \cite[Theorem 10.0.2]{Epstein_Mazzeo_annmathstudies} show that there is a positive constant, $C=C(\alpha,\delta,k,K,m,n,T)$, such that
\begin{align*}
\|u_N\|_{C^{k,2+\alpha}_{WF}([0,T]\times\supp\varphi_N)} &\leq C \|\psi_Ng\|_{C^{k,\alpha}_{WF}([0,T]\times P_N)}\\
&\leq C \|g\|_{C^{k,\alpha}_{WF}([0,T]\times P_N)}\quad\hbox{(using inequality $\eqref{eq:Eq0_psi}$)}.
\end{align*}
From the preceding three inequalities, it follows that
\begin{equation}
\label{eq:Eq5}
\begin{aligned}
\|\varphi_N (L-L_N) u_N\|_{C^{k,\alpha}_{WF}([0,T]\times\bar S_{n,m})} 
&\leq C\left(r^{\alpha/2}+C_r\eps \right)\|g\|_{C^{k,\alpha}_{WF}([0,T]\times\bar S_{n,m})}\\
&\quad + C_r\eps^{-m_k} T\|g\|_{C([0,T]\times\bar S_{n,m})}.
\end{aligned}
\end{equation}
It remains to estimate the term $[L,\varphi_N]u_N$, for $N \geq 1$, by employing the same method that we used to estimate the term $[L,\varphi_0] u_0$. The only change is that we replace the standard interpolation inequalities \cite[Theorem 8.8.1]{Krylov_LecturesHolder} by Corollary \ref{cor:Higher_order_interpolation_inequalities}, the standard a priori Schauder estimates \cite[Theorems 9.2.3 and 8.12.1]{Krylov_LecturesHolder} by Theorem \ref{thm:Interior_estimate}, and the standard maximum principle \cite[Corollary 8.1.5]{Krylov_LecturesHolder} by \cite[Proposition 3.3.1]{Epstein_Mazzeo_annmathstudies}. We then obtain the analogue of inequality \eqref{eq:Eq4},
\begin{equation}
\label{eq:Eq9}
\begin{aligned}
&\left\|\left[L,\varphi_N\right]u_N\right\|_{C^{k,\alpha}_{WF}([0,T]\times\bar S_{n,m})}\\
&\qquad \leq C r^{-(k+3)} \left(\eps \|g\|_{C^{k,\alpha}_{WF}([0,T]\times\bar S_{n,m})} + \eps^{-m_k} T\|g\|_{C([0,T]\times\bar S_{n,m})} \right).
\end{aligned}
\end{equation}
Combining inequalities \eqref{eq:Eq4}, \eqref{eq:Eq5} and \eqref{eq:Eq9}, and using the fact that at most $A$ balls of the covering of $S_{n,m}$ have non-empty intersection, identity \eqref{eq:Eq2} yields
\begin{equation*}
\begin{aligned}
\|(\partial_t-L)Vg-g\|_{C^{k,\alpha}_{WF}([0,T]\times\bar S_{n,m})}
&\leq C\left(r^{\alpha/2}+\eps r^{-(k+3)}+\eps C_r\right)\|g\|_{C^{k,\alpha}_{WF}([0,T]\times\bar S_{n,m})}\\
&\quad + \left(Cr^{-(k+3)}\eps^{-m_k}+C_r\right) T\|g\|_{C([0,T]\times\supp\varphi_N)}.
\end{aligned}
\end{equation*}
By choosing the positive constants $r$, $\eps$ and $T$ small enough, we find a positive constant, $C_0<1$, such that
\begin{equation*}
\begin{aligned}
\|(\partial_t-L)Vg-g\|_{C^{k,\alpha}_{WF}([0,T]\times\bar S_{n,m})}
&\leq C_0\|g\|_{C^{k,\alpha}_{WF}([0,T]\times\bar S_{n,m})}, \quad \forall\, g \in C^{k,\alpha}_{WF}([0,T]\times\bar S_{n,m}),
\end{aligned}
\end{equation*}
which is equivalent to \eqref{eq:AlmostInverse}.

The preceding argument implies existence of solutions $u\in C^{k,2+\alpha}_{WF}([0,T]\times\bar S_{n,m})$, to the inhomogeneous initial-value problem \eqref{eq:Inhom_initial_value_problem}, with $f\equiv 0$ and $g \in C^{k,\alpha}_{WF}([0,T]\times\bar S_{n,m})$, up to a fixed time $T$. A standard bootstrapping argument allows us to obtain existence of solutions to problem \eqref{eq:Inhom_initial_value_problem} up to any time $T$. 

This completes the proof.
\end{proof}

Finally, we give the
\begin{proof}[Proof of Theorem \ref{thm:ExistenceUniqueness_continuous}]
Uniqueness of solutions is a straightforward consequence of Proposition \ref{prop:Comparison_principle}, and so, we only consider the question of existence of solutions. Let $\{f_N\}_{N\geq 1}\subset C^{\infty}(\bar S_{n,m})$ be a sequence of smooth functions such that
\begin{align}
\label{eq:Convergence_f_N}
\|f_N-f\|_{C([0,T]\times\bar S_{n,m})} \rightarrow 0,\quad\hbox{ as } N\rightarrow\infty,
\end{align}
Let $u_N$ be the unique solution to the inhomogeneous initial-value problem \eqref{eq:Inhom_initial_value_problem}, with $u_N(0,\cdot)= f_N$ on $\bar S_{n,m}$, given by Theorem \ref{thm:ExistenceUniqueness}. Then, it follows that $u_N\in C^{k,2+\alpha}_{WF}([0,T]\times\bar S_{n,m})$, for all $k\in\NN$ and all $\alpha\in (0,1)$. From Corollary \ref{cor:Maximum_principle} and property \eqref{eq:Convergence_f_N}, we obtain that
$$
\|u_N-u_M\|_{C([0,T]\times\bar S_{n,m})}  \leq \|f_N-f_M\|_{C([0,T]\times\bar S_{n,m})} \rightarrow 0,\quad\hbox{ as } N,M\rightarrow\infty,
$$
and so, the sequence $\{u_N\}_{N\geq 1}$ converges uniformly to a function $u\in C([0,T]\times\bar S_{n,m})$, and clearly we have that $u(0,\cdot)=f$ on $\bar S_{n,m}$. Moreover, applying Corollary \ref{cor:Maximum_principle} to each of the functions $u_N$, and using property \eqref{eq:Convergence_f_N}, we have that
\begin{equation}
\label{eq:Upper_bound_u}
\|u\|_{C([0,T]\times\bar S_{n,m})}  \leq C\|f\|_{C([0,T]\times\bar S_{n,m})} .
\end{equation}
Let $k\in\NN$, $\alpha\in (0,1)$, $T_0>0$ and let $r_0=r_0(\alpha,k,m,n)$ be the positive constant appearing in the conclusion of Theorem \ref{thm:Time_interior_estimate}. Covering $\bar S_{n,m}$ by a countable collection of balls $\{B_{r_0}(z^N)\}_{N\geq 1}$, we may apply estimate \eqref{eq:Time_interior_estimate} on each ball, to obtain that there is a positive constant, $C=C(\alpha,\delta,k,K,m,n,T_0,T)$, such that
$$
\|u_N\|_{C^{k,2+\alpha}_{WF}([T_0,T]\times \bar S_{n,m})} \leq 
C\left(\|\partial_t u_N-Lu_N\|_{C^{k,\alpha}_{WF}([0,T]\times\bar S_{n,m})}  + \|u_N\|_{C([0,T]\times\bar S_{n,m})} \right),
$$
and using inequality \eqref{eq:Upper_bound_u}, we have that
$$
\|u_N\|_{C^{k,2+\alpha}_{WF}([T_0,T]\times \bar S_{n,m})} \leq 
C\left(\|g\|_{C^{k,\alpha}_{WF}([0,T]\times\bar S_{n,m})}  + \|f\|_{C([0,T]\times\bar S_{n,m})} \right),\quad\forall\, N\geq 1.
$$
Thus, applying the Arzel\`a-Ascoli Theorem and \cite[Proposition 5.2.8]{Epstein_Mazzeo_annmathstudies}, we can find a subsequence of $\{u_N\}_{N\geq 1}$, which converges uniformly on compact subsets of $[T_0,T]\times\bar S_{n,m}$ in the H\"older space $C^{k,2+\alpha'}_{WF}([T_0,T]\times \bar S_{n,m})$, for all $\alpha'\in (0,\alpha)$, to a function that belongs to $C^{k,2+\alpha}_{WF}([T_0,T]\times \bar S_{n,m})$. Thus, the limit function, $u\in C([0,T]\times\bar S_{n,m})$, belongs to $C^{k,2+\alpha}_{WF}([T_0,T]\times \bar S_{n,m})$, satisfies the Schauder estimate \eqref{eq:Solution_estimate_continuous}, for all $k\in\NN$ and $\alpha\in (0,1)$, and solves the inhomogeneous initial-value problem \eqref{eq:Inhom_initial_value_problem}. This completes the proof.
\end{proof}

%%%%%%%%%%%%%%%%%%%%%%%%%%%%%%%%%%%%%%%%%%%%%%%%%%%%%%%%%%%%%%%%%%%%%%%%%%%%%%%
%
%                                bibliography
%
%%%%%%%%%%%%%%%%%%%%%%%%%%%%%%%%%%%%%%%%%%%%%%%%%%%%%%%%%%%%%%%%%%%%%%%%%%%%%%%

\bibliography{mfpde}
%\bibliography{../Bibinputs/mfpde}
\bibliographystyle{amsplain}

\end{document}